\newcommand{\bol}{\boldsymbol}
\newcommand{\ney}{\boldsymbol{y}}                          
\newcommand{\nex}{\boldsymbol{x}}           
\newcommand{\bnex}{\bold{x}} 
\newcommand{\btau}{\boldsymbol{\tau}}
\newcommand{\ner}{\boldsymbol{r}}
\newcommand{\de}{\,\mathrm{d}}                               
\newcommand{\e}{\operatorname{e}}                               
\newcommand{\im}{\operatorname{i}}                     
\newcommand{\inc}{\mathrm{inc}}
\newcommand{\andtext}{\quad\mbox{and}\quad}
\newcommand{\p}{\partial}
\newcommand{\lf}{\left}
\newcommand{\rg}{\right}
\newcommand{\R}{\mathbb{R}}       
\newcommand{\C}{\mathbb{C}}
\newcommand{\bxi}{\boldsymbol\xi}  
\newcommand{\bnor}{\bold n}          
\newcommand{\nor}{\boldsymbol n} 
\newcommand{\curl}{\operatorname{curl}}
\newtheorem{theorem}{Theorem}[section]
\newtheorem{lemma}[theorem]{Lemma}
\newtheorem{remark}[theorem]{Remark}
\title{Planewave density interpolation methods for 3D Helmholtz boundary integral equations}
\author[1]{Carlos P\'erez-Arancibia\thanks{cperez@mat.uc.cl}}
\affil[1]{\small{Institute for Mathematical and Computational Engineering, School of Engineering and Faculty of Mathematics, Pontificia Universidad Cat\'olica de Chile}}
\author[2]{Catalin Turc\thanks{catalin.c.turc@njit.edu}}
\affil[2]{\small{Department of Mathematical Sciences, New Jersey Institute of Technology}}
\author[3]{Luiz M. Faria\thanks{luiz.maltez-faria@inria.fr}}
\affil[3]{\small{Laboratoire POEMS, INRIA}}
\date{\today}
\begin{document}

\maketitle

\begin{abstract}
This paper introduces planewave density interpolation methods for the regularization of weakly singular, strongly singular, hypersingular and nearly singular integral kernels present in 3D Helmholtz surface layer potentials and associated integral operators. Relying on Green's third identity and pointwise interpolation of density functions in the form of planewaves, these methods allow layer potentials and integral operators to be expressed in terms of integrand functions that remain smooth (at least bounded) regardless the location of the target point relative to the surface sources. Common challenging integrals that arise in both Nystr\"om and boundary element discretization of boundary integral equation, can then be numerically evaluated by standard quadrature rules that are irrespective of the kernel singularity. Closed-form and purely numerical planewave density interpolation procedures are presented in this paper, which are used in conjunction with Chebyshev-based Nystr\"om and Galerkin boundary element methods. A variety of numerical examples---including problems of acoustic scattering involving multiple touching and even intersecting obstacles, demonstrate the capabilities of the proposed technique.
\end{abstract}

 \textbf{Keywords}: Helmholtz equation, integral equations, Nystr\"om methods, boundary element methods\\
   
\noindent \textbf{AMS subject classifications}: 
 65N38, 35J05, 65T40, 65F08.

\section{Introduction}
Challenging weakly singular, strongly singular, hypersingular, and nearly singular surface integrals are ubiquitous to boundary integral equation (BIE) formulations of linear partial differential equations (PDEs).  A plethora of numerical and semi-analytical procedures including, for instance, singularity subtraction\footnote{This technique has also been referred to as singularity extraction by some authors.}~\cite{caorsi1993theoretical,graglia1993numerical,jarvenpaa2006singularity,Jarvenpaa:2003cs,wilton1984potential,YlaOijala:2003bn}, Duffy-like transformations~\cite{duffy1982quadrature,Sauter:2001iw,Sauter:1996iw,reid2015generalized,hackbusch1993efficient}, polar singularity cancelation~\cite{Bruno:2001ima,Hackbusch:1994tq,schwab1992numerical}, singularity extraction~\cite{Schulz:1998kl,Schwab:bj}, among others techniques, have been proposed in the literature for the evaluation of these difficult integrals in the context of both Nystr\"om and boundary element methods. 
Despite all these significant efforts and the compelling advantages that BIE methods offer over standard volume discretization techniques such as finite element and finite difference methods---especially in handling unbounded domains and seamlessly incorporating radiation conditions at infinity for time-harmonic wave scattering---they still face criticism of being difficult to implement. From the authors' viewpoint, the main source of practical difficulties arises from the significant effort researchers and practitioners have to invest into understanding and implementing a specific set of techniques tailored to handle the various integration scenarios concerning the target point location relative to the surface sources. We hereby address this issue for both  Nystr\"om and boundary element discretizations of three-dimensional Helmholtz BIEs by introducing a universal semi-analytical procedure capable of regularizing all the aforementioned challenging surface integrals at the continuous level, i.e., prior to numerical integration. For the sake conciseness, we specifically consider combined-field BIE formulations of sound-soft (Dirichlet) and sound-hard (Neumann) scattering problems leading to the well-known Brakhage-Werner~\cite{brakhage1965dirichletsche} and Burton-Miller~\cite{Burton1971Application} integral equations, respectively, which feature all four boundary integral operators of Calder\'on calculus. 

As mentioned above, there is extensive literature on the subject. We refer the reader to~\cite{HDI3D} for a thorough review concerning Nystr\"om methods and various approaches to deal with nearly singular integrals. Regarding BEMs specifically, two main groups of techniques can be distinguished. On one hand we have semi-analytical techniques~\cite{caorsi1993theoretical,graglia1993numerical,jarvenpaa2006singularity,Jarvenpaa:2003cs,wilton1984potential,YlaOijala:2003bn} whereby singular terms are extracted from the kernel to be integrated in closed form, while the remaining smoother part is integrated numerically by means of standard quadrature rules. And, on the other hand, we have techniques based on regularizing coordinate transformations~\cite{duffy1982quadrature,Sauter:2001iw,Sauter:1996iw,reid2015generalized,hackbusch1993efficient,Hackbusch:1994tq,schwab1992numerical} whereby specialized changes of variables are utilized to turn singular integrands into regular (analytic) integrands to which standard quadrature rules can be directly applied to achieve any desired accuracy. Although effective at dealing with the specific classes of integrands (on polygonal surface meshes) and basis functions for which they have been designed, none of the aforementioned techniques handle nearly singular integrals arising when target points lying off the surface are close to the surface sources.

In detail, this paper presents planewave density interpolation (PWDI) methods for the regularization of weakly singular, strongly singular, hypersingular  and nearly singular integral kernels present in Helmholtz layer potentials and associated boundary integral operators. Relying on Green's third identity and a certain Taylor-like interpolation of the surface density in terms of homogeneous solutions of the underlaying PDE (planewaves in this case), density interpolation methods~\cite{plane-wave:2018,HDI3D} allow layer potentials and operators to be expressed in terms of integrand functions that are smooth (at least bounded) regardless the target point location. The resulting surface integrals can then be numerically evaluated by means of standard off-the-shelf quadrature rules that are irrespective of the singularity of the associated integral kernels. As such, kernel-regularized layer potentials and operators can be directly evaluated at target points that are arbitrarily close to their surface sources enabling, in particular,  the straightforward Nystr\"om or boundary element method (BEM) discretization of BIEs involving multiple obstacles that are close, touching, or even intersecting each other. Indeed, we demonstrate through numerical experiments that BIEs posed on the surface of composite obstacles, i.e., obstacles that can be expressed as unions of  geometrically simpler  intersecting obstacles, can be recast as BIEs posed on the union of the boundaries of the simpler domains, which, upon application of the proposed PWDI kernel-regularization technique, can be directly solved using BEM retaining the expected order of convergence. This aspect of the proposed technique may significantly simplify the numerical solution of many real-world problems involving intricate obstacles, as it effectively allows  bypassing the often involved task of meshing complex surfaces.

 The structure of this paper is as follows: The theoretical basis of Taylor interpolation on regular surfaces, and of density interpolation methods in general, are established in~Section~\ref{sec:pw_interp}.  Two PWDI procedures are next introduced in Section~\ref{sec:interpolating_functions}. One amounts to the non-trivial extension to three dimensions of the low-order closed-form analytic procedure put forth in~\cite{plane-wave:2018} (Section~\ref{Meq1}), while the other is a purely numerical procedure for the construction of arbitrarily high-order planewave density interpolants (Section~\ref{sec:higher_order}). Section~\ref{sec:nyst_bem} then provides the details on the discretization of kernel-regularized layer potentials and integral operators by means of a Chebyshev-based Nystr\"om method (Section~\ref{sec:Nystrom}) and a Galerkin BEM (Section~\ref{sec:BEM}). (The compatibility of the proposed approach with fast methods is addressed in Appendix~\ref{app:compatible}.) 
 Section~\ref{eq:numerical_results}, finally, presents a variety of numerical examples that validate and demonstrate the various capabilities of the PWDI technique in the context of both Nystr\"om and boundary element methods. 
 
\section{Preliminaries}\label{sec:prelim}


For the sake of definiteness we focus in this paper on scattering problems related to acoustic sound-soft and sound-hard scatterers, e.g., either Dirichlet or Neumann boundary conditions. We thus seek scattered fields that are solutions of the following exterior Dirichlet and Neumann boundary value problems
\begin{equation}\label{eq:Dirichlet}\lf\{\begin{split}
\Delta u^s_D+k^2 u^s_D=&~0\quad{\rm in}\ \mathbb{R}^3\setminus \Omega,\\
u^s_D+u^{\inc}=&~0\quad{\rm on}\ \Gamma,\\
\lim_{|\ner|\to\infty}|\ner|\lf(\frac{\partial u^s_D}{\partial |\ner|}-\im\!ku^s_D\rg)=&~0,
\end{split}\rg.\end{equation}
and 
\begin{equation}\label{eq:Neumann}\lf\{\begin{split}
\Delta u^s_N+k^2 u^s_N=&~0\quad{\rm in}\ \mathbb{R}^3\setminus \Omega,\\
\frac{\partial u^s_N}{\p \nor}+\frac{\partial u^{\inc}}{\p \nor}=&~0\quad{\rm on}\ \Gamma,\\
\lim_{|\ner|\to\infty}|\ner|\lf(\frac{\partial u^s_N}{\partial |\ner|}-\im\! ku^s_N\rg)=&~0,
\end{split}\rg.\end{equation}
respectively, where $\Omega\subset\R^3$ is a bounded obstacle whose boundary $\Gamma$ is a piecewise smooth, oriented and closed surface. (The incident fields $u^{\inc}$ in equations~\eqref{eq:Dirichlet} and~\eqref{eq:Neumann} are assumed to be solutions of the Helmholtz equation in all of $\R^3$.)

The  Dirichlet~\eqref{eq:Dirichlet} and Neumann~\eqref{eq:Neumann} scattering problems can be formulated via well-posed boundary integral equations by means of the combined field approach introduced by Brakhage-Werner~\cite{brakhage1965dirichletsche} and Burton-Miller~\cite{Burton1971Application}, respectively. The combined field approach relies on the use of Helmholtz single- and double-layer potentials, hereby denoted as
\begin{equation}\label{eq:LPots}
\lf(\mathcal S\varphi\rg) (\ner) := \int_{\Gamma} G(\ner,q)\varphi(q)\de s(q)\mbox{ and } \lf(\mathcal D\varphi\rg) (\ner) := \int_{\Gamma} \frac{\p G(\ner,q)}{\p \bnor(q)}\varphi(q)\de s(q),
\end{equation}
for  $\ner\in\R^3\setminus\Gamma$, respectively, where $G(\ner,\ner'):=(4\pi)^{-1}{\e^{\im\!k|\ner-\ner'|}}/{|\ner-\ner'|}\label{eq:GF}$
is the outgoing free-space Green function for the Helmholtz equation in~$\R^3$ with wavenumber~$k>0$.  (In what follows we utilize the symbol~$\ner$  to denote points that do not lie of the surface~$\Gamma$ while the symbols $p$ and $q$ are used exclusively to refer to points on the surface~$\Gamma$.)

Interior and exterior Dirichlet/Neumann traces of the single- and double-layer potentials give rise to the four boundary integral operators of the Calder\'on calculus associated with the Helmholtz equation. Specifically, the Helmholtz single-layer ($S$), double-layer ($K$), adjoint double-layer ($K'$) and hypersingular ($N$) operators are defined as
\begin{equation}\begin{split} \lf(S\varphi\rg)(p) := \int_{\Gamma} G(p,q)\varphi(q)\de s(q), &\qquad \lf(K'\varphi\rg)(p) := \int_{\Gamma} \frac{\p G(p,q)}{\p \bnor(p)}\varphi(q)\de s(q),\\
\lf(K\varphi\rg)(p) := \int_{\Gamma} \frac{\p G(p,q)}{\p \bnor(q)}\varphi(q)\de s(q),&\qquad
\lf(N\varphi\rg)(p) := \mathrm{f.p.}\int_{\Gamma} \frac{\p^2 G(p,q)}{\p \bnor(p)\p \bnor(q)}\varphi(q)\de s(q),\end{split}\label{eq:int_op}
\end{equation}
for  $p\in \Gamma$, where $\bold n(q)$ denotes the outward pointing unit normal to $\Gamma$ at $q\in\Gamma$. As usual, the initials f.p. in the definition of the hypersingular operator $N$ stand for Hadamard finite-part integral.  

The combined field approach consists of looking for a scattered field $u^s=u^s_D$ (resp. $u^s=u^s_N$)  in the form
\begin{equation}
u^s(\ner)=(\mathcal{D}\varphi)(\ner)-{\rm i}\eta (\mathcal{S}\varphi)(\ner)\ \ \ner\in\mathbb{R}^3\setminus\Gamma.\label{eq:layer_pot}
\end{equation}
where $\varphi=\varphi_D:\Gamma\to\C$ (resp. $\varphi=\varphi_N:\Gamma\to\C$) is an unknown density function and $\eta\in\R$ is the coupling parameter. 
The enforcement of Dirichlet and Neumann  boundary conditions on $\Gamma$ leads to the following combined field boundary integral equations
\begin{equation}\label{BW}
{\rm(BW)}\quad \frac{1}{2}\varphi_D(p)+(K\varphi_D)(p)-{\rm i}\eta(S\varphi_D)(p)=-u^{\inc}(p),\quad p\in\Gamma,
\end{equation}
and respectively
\begin{equation}\label{BM}
{\rm(BM)}\quad \frac{{\rm i}\eta}{2}\varphi_N(p)-{\rm i}\eta(K'\varphi_N)(p)+(N\varphi_N)(p)=-\frac{\partial u^{\inc}(p)}{\partial\bnor(p)},\quad p\in\Gamma.
\end{equation}
Both boundary integral equations~\eqref{BW} and~\eqref{BM} are well-posed in appropriate functional spaces provided that $\eta\in\mathbb{R}, \eta\neq 0$~\cite{COLTON:1983}. 

As is well known, one of the main challenges in the numerical discretization of boundary integral equations~\eqref{BW} and~\eqref{BM} is posed by the singular character of the kernels of the boundary integral operators defined in equations~\eqref{eq:int_op} as the integration point $q$ approaches the target point $p$. Indeed, for a sufficiently regular surface $\Gamma\subset\R^3$ the operators $S$, $K$, and $K'$ feature kernels with weak (integrable) singularities of type~$\mathcal{O}(|p-q|^{-1})$, while the operators $N$ feature hyper-singular kernels of type~$\mathcal{O}(|p-q|^{-3})$ as $\Gamma\ni q\to p\in\Gamma$. The numerical evaluation of the layer potentials~\eqref{eq:LPots}, on the other hand, faces the significant challenge of dealing with the nearly singular character of the integral kernels at points $\ner\in\R^3\setminus\Gamma$ lying near the boundary at which, although smooth, the integrands exhibit large derivatives that ultimately hinder the accuracy of standard integration procedures.

In what follows we present a density interpolation method aimed at expressing the boundary integral operators~\eqref{eq:int_op} and layer potentials~\eqref{eq:LPots} in terms of surface integrands of prescribed regularity. For presentation simplicity and without loss of generality, instead of treating each one of the integral operators~\eqref{eq:int_op} and layer potentials~\eqref{eq:LPots} separately, we focus on the combined field integral operators of the BW~\eqref{BW} and BM~\eqref{BM} integral equations and the associated combined field potential~\eqref{eq:layer_pot}.

\section{Kernel regularization via density interpolation\label{sec:pw_interp}}

Before we briefly embark on the presentation of the proposed density interpolation method, we first state some useful results of the differential geometry of surfaces that will provided the theoretical basis and the notation for the derivations presented below in this section. The main result of the next section is summarized in Remark~\ref{rem:taylor}.

\subsection{Taylor series on smooth surfaces}\label{sec:taylor}
We assume throughout this section that $\Gamma$ is a regular surface. First, given a system of coordinates around $p\in\Gamma$ with $\mathbf{x}(x_1,x_2)=p,\ \mathbf{x}:V\subset\mathbb{R}^2\to\Gamma$, we define the covariant basis of the tangent space $T_p\Gamma$ of $\Gamma$ at a point $p$ as
\begin{equation*}
\mathbf{e}_i(p):=\frac{\partial \mathbf{x}}{\partial x_i}(p),\quad i=1,2.
\end{equation*}
We will follow the usual convention of not using the argument $p$ whenever there is no possibility of confusion. Using the Riemannian metric tensor 
\begin{equation}
g_{ij}:=\langle\mathbf{e}_i,\mathbf{e}_j\rangle_p=\mathbf{e}_i\cdot\mathbf{e}_j,\quad 1\leq i,j\leq 2,
\label{eq:metric_tensor}\end{equation} 
we define the contravariant basis as
$\mathbf{e}^i:=\sum_{j=1}^2g^{ij}\mathbf{e}_j$, $i=1,2,$ in terms of the inverse of the metric tensor $(g^{ij})=(g_{ij})^{-1}$. We also denote by $g$ the determinant of the metric tensor $(g_{ij})$, that is $g=g_{11}g_{22}-g_{12}^2$. With these notations in place, we have that the unit normal at $p\in\Gamma$ is given by $\bnor={\mathbf{e}_1\wedge \mathbf{e}_2}/{\sqrt{g}}.$

Given a function $\varphi:\Gamma\to\mathbb{C}$ we define its tangential gradient (or contravariant gradient) by the formula
$\langle d\varphi,X\rangle_p={\rm grad}\ \varphi\cdot X$, for\ all  $X\in T_p\Gamma,$
where  $d\varphi=\partial_1\varphi\ \mathbf{e}_1+\partial_2\varphi\ \mathbf{e}_2$ is a $1-$form. An explicit formula for ${\rm grad}\ \varphi$ is given by
\begin{equation}\label{eq:grad_surf}
{\rm grad}\ \varphi=(g^{11}\partial_1\varphi+g^{12}\partial_2\varphi)\mathbf{e}_1+(g^{21}\partial_1\varphi+g^{22}\partial_2\varphi)\mathbf{e}_2=\partial_1\varphi\ \mathbf{e}^1+\partial_2\varphi\ \mathbf{e}^2.
\end{equation}
We also define the Hessian of $\varphi$, ${\rm Hess}(\varphi)$ at $p\in\Gamma$ as the linear operator
\[
{\rm Hess}(\varphi):T_p\Gamma\to T_p\Gamma,\quad {\rm Hess}(\varphi)(Y)=\nabla_Y{\rm grad}\ \varphi,\ Y\in T_p\Gamma,
\]
where $\nabla$ is the Riemannian connection on $\Gamma$. The latter can be expressed as 
\[
\nabla_{\mathbf{e}_i}\mathbf{e}_j=\sum_{\ell=1}^2\Gamma^{\ell}_{ik}\mathbf{e}_\ell,
\]
in terms of the Christoffel symbols defined by $\displaystyle\Gamma^\ell_{ij}:=\frac{\partial \mathbf{e}_i}{\partial x_j}\cdot \mathbf{e}^\ell.$

It can be shown that ${\rm Hess}(\varphi)$ can be also viewed as a symmetric bilinear form on $T_p\Gamma$ given by
${\rm Hess}(\varphi)(X,Y)=\langle{\rm Hess}(\varphi)X,Y\rangle_p$,  $X,Y\in  T_p\Gamma$.
The expression of ${\rm Hess}(\varphi)$ can be computed explicitly in the form
\begin{equation}\label{eq:hess}
{\rm Hess}(\varphi)=\sum_{i,j=1}^2\left(\partial_{i}\p_{j}\varphi-\sum_{\ell=1}^2\Gamma^\ell_{ij}\partial_\ell\varphi\right)\mathbf{e}^i\otimes\mathbf{e}^j,
\end{equation}
where $\mathbf{e}^i\otimes\mathbf{e}^j=\mathbf{e}^i\ (\mathbf{e}^j)^\top$. For a scalar function $\varphi:\Gamma\to\mathbb{C}$ and a multi-index $\alpha=(\alpha_1,\alpha_2),$ $\alpha_j\in\mathbb{Z}$, $\alpha_j\geq 0$, $j=1,2$, we denote by
\begin{equation}\label{eq:HO_der}
\partial^\alpha\varphi:=\partial_1^{\alpha_1}\partial_2^{\alpha_2}\varphi=\frac{\partial^{|\alpha|}\varphi}{\partial {x_1}^{\alpha_1}\partial{x_2}^{\alpha_2}}
\end{equation}
where $|\alpha|=\alpha_1+\alpha_2$.  

Finally, we need to make use of the \emph{exponential map} on $\Gamma$. This map is defined on an open neighborhood $\mathcal{U}$ of the origin in $T_p\Gamma$, that is $\exp_p:\mathcal{U}\subset T_p\Gamma\to \Gamma$ such that, for $v\in\mathcal{U}$ with $|v|$ small enough, $\exp_p(v)$ is defined as the point on $\Gamma$ which is distance $|v|$ away on the geodesic originating at $p$ and having velocity $v/|v|$ at $p$. With these notations in place, we are in the position to state \emph{Taylor's formula} in the form
\begin{equation}\label{eq:Taylor0}
\varphi(\exp_p(v))=\varphi(p)+v^\top{\rm grad}\ \varphi(p)+\frac{1}{2}v^\top{\rm Hess}(\varphi)(p)\ v+\mathcal{O}(|v|^3),\ p\in\Gamma,\ v\in T_p(\Gamma)
\end{equation}
as $|v|\to 0$, or equivalently as 
\begin{equation}\label{eq:Taylor}
\varphi(q)=\varphi(p)+v^\top{\rm grad}\ \varphi(p)+\frac{1}{2}v^\top{\rm Hess}(\varphi)(p)\ v+\mathcal{O}(|p-q|^3),\ p,q\in\Gamma,
\end{equation}
as $|p-q|\to 0$, where $v=\exp_p^{-1}(q)\in T_p\Gamma$  in the case when $\varphi$ is a smooth function defined on $\Gamma$. Taylor's formula can be carried to higher-order terms in the form
\begin{equation}\label{eq:Taylor_ho}
\varphi(q)=\varphi(p)+\sum_{j=1}^M\nabla_\Gamma^j\varphi(p)[v\otimes\cdots\otimes v]+\mathcal{O}(|p-q|^{M+1}),\ q,p\in\Gamma,
\end{equation}
as $|p-q|\to 0$, where $v=\exp_p^{-1}(q)\in T_p\Gamma$. and where the $j$-th tensor $(\nabla_\Gamma^j)\varphi:\underbrace{T_p\Gamma\times\cdots\times T_p\Gamma}_{j\ {\rm times}}\to \mathbb{R}$ is defined recursively as
\[
\nabla_\Gamma^j\varphi(Y)=\nabla_Y(\nabla_\Gamma^{j-1}\varphi),\quad j\geq 2,\quad \nabla_\Gamma\varphi:={\rm grad}\ \varphi, \quad Y\in T_p\Gamma,
\]
in terms of the Riemannian connection $\nabla$ on $\Gamma$. Clearly, as the surface gradient~\eqref{eq:grad_surf} and the Hessian~\eqref{eq:hess}, the higher order terms  $\nabla_\Gamma^j\varphi$, $j\geq 2$, can be expressed as a linear combination of tensor products of the form $\mathbf{e}^{i_1}\otimes\cdots\otimes\mathbf{e}^{i_j},\ i_\ell\in\{1,2\}$, whose coefficients, in turn, can be expressed as a linear combination of $\partial^\alpha\varphi$ for all $|\alpha|\leq j$. 

\begin{remark} \label{rem:taylor}The main take away message of this section is that Taylor's formula~\eqref{eq:Taylor_ho} implies that if two smooth density functions, say $\varphi$ and $\psi$, are such that  $\p^\alpha \varphi(p)=\p^\alpha\psi(p)$ for some $p\in\Gamma$ and for all $|\alpha|\leq M$, where the derivatives are taken with respect to any local parametrization of the surface around the point $p$, then $\varphi(p)=\psi(q) +\mathcal O(|p-q|^{M+1})$ as $\Gamma\ni q\to p\in\Gamma$. In the next section we will use that result produce a suitable Taylor-like interpolation of the density that will be used to regularize the boundary integrals. \end{remark}

\subsection{Kernel-regularized boundary integral operators and layer potentials}
Our density interpolation method relies on use of certain families of smooth functions $\Phi:\R^3\times\Gamma\to\C$ that are solutions of the Helmholtz equation
\[
\Delta_{\ner}\Phi(\ner,p)+k^2\Phi(\ner,p)=0,\quad \ner\in\mathbb{R}^3\quad{\rm for\ all}\ p\in\Gamma.
\]
Letting
\begin{equation}\label{eq:traces}
\Phi(q,p) := \lim_{\varepsilon\to 0}\Phi(q+\varepsilon \bnor(q),p)\andtext \Phi_n(q,p) := \lim_{\varepsilon\to 0}\nabla \Phi(q+\varepsilon \bnor(q),p)\cdot \bnor(q),
\end{equation}
 for any given $p\in\Gamma$, denote the Dirichlet and Neumann traces of such functions, respectively, we have that an application of the Green's third identity~\cite{COLTON:1983,NEDELEC:2001} leads to 
\begin{equation}
\boldsymbol 1_{\Omega}(\ner) \Phi(\ner,p) = -\int_{\Gamma}\frac{\partial G(\ner,q)}{\partial\bnor(q)}\Phi(q,p)\de s(q)+\int_\Gamma G(\ner,q)\Phi_n(q,p)\de s(q)\label{eq:green_PW}
\end{equation}
for all $\ner\in\R^3\setminus\Gamma$ and $p\in\Gamma$, where $\bol 1_{\Omega}$ denotes the characteristic function of the domain $\Omega$, i.e., $\bol 1_{\Omega}=1$ in $\Omega$ and $\bol 1_{\Omega}=0$ in~$\R^3\setminus\overline\Omega$. Therefore, combining the layer potential~\eqref{eq:layer_pot} with formula~\eqref{eq:green_PW}  we obtain the following equivalent expression for the combined field potential~\eqref{eq:layer_pot}:
\begin{equation}\label{eq:green_field_hoss}\begin{split}
u^s(\ner)=-\boldsymbol 1_{\Omega}(\ner)\Phi(\ner,p) + \int_{\Gamma}\frac{\partial G(\ner,q)}{\partial\bnor(q)}\lf\{\varphi(q)-\Phi(q,p)\rg\}\de s(q)\\
-\int_\Gamma G(\ner,q)\lf\{{\rm i}\eta\varphi(q)-\Phi_n(q,p)\rg\}\de s(q),\end{split}
\end{equation}
which is valid for all $\ner\in\mathbb{R}^3\setminus\Gamma$ and $p\in\Gamma$.

Letting then $\ner=p+\varepsilon\bnor(p)$, $\varepsilon>0$, and taking the limit of both sides of equation~\eqref{eq:green_field_hoss} as $\varepsilon\to 0^+$ we obtain the following reformulation of the BW boundary integral equation~\eqref{BW}
\begin{eqnarray}
\frac{1}{2}\lf\{\varphi(p)-\Phi(p,p)\rg\}+ \int_{\Gamma}\frac{\partial G(p,q)}{\partial\bnor(q)}\lf\{\varphi(q)-\Phi(q,p)\rg\}\de s(q)&\nonumber\\
-\int_\Gamma G(p,q)\lf\{{\rm i}\eta\varphi(q)-\Phi_n(q,p)\rg\}\de s(q)&~=-u^{\inc}(p)\quad\mbox{for all } p\in\Gamma,\label{eq:BW_hoss}
\end{eqnarray}
where we have utilized the standard jump conditions of the single- and double-layer operators~\cite{COLTON:1983,NEDELEC:2001}. 

The scope of the proposed density interpolation technique is to explicitly and efficiently construct a family of functions $\Phi(\ner,p)$ such that the integrands that enter in equation~\eqref{eq:BW_hoss} are regular (at least bounded) as $\Gamma\ni q\to p\in\Gamma$. To this end, for a given $\eta\in\mathbb{R},\eta\neq 0$, and a scalar function $\varphi:\Gamma\to\mathbb{C}$ which is assumed to be $(M+1)$-times continuously differentiable at $p\in\Gamma$,  we say that a family of functions~$\Phi(\ner,p)$ defined above satisfies  \emph{Taylor-like interpolation conditions of order $M\geq 0$} at $p\in\Gamma$ if its Dirichlet and Neumann traces defined in equations~\eqref{eq:traces}, satisfy 
\begin{subequations}\begin{equation}
\lim_{q\to p} \partial^{\alpha}\lf\{\varphi(q)-\Phi(q,p)\rg\} = 0\quad\mbox{for\ all}\quad |\alpha|\leq M,\andtext
\label{eq:cond_1_SL}\end{equation}
\begin{equation}
\lim_{q\to p} \partial^{\alpha}\lf\{{\rm i}\eta\varphi(q)-\Phi_n(q,p)\rg\} = 0\quad\mbox{for\ all}\quad |\alpha|\leq M,
\label{eq:cond_2_SL}\end{equation}\label{eq:cond_SL}\end{subequations}
respectively, where all the derivatives are taken with respect to $q$ on the surface. In the light of the Taylor's formula~\eqref{eq:Taylor_ho}, it is clear that 
\begin{equation}\label{eq:control_ho}
\lf|\varphi(q)-\Phi(q,p)\rg|\lesssim |q-p|^{M+1}\andtext \lf|{\rm i}\eta\varphi(q)-\Phi_n(q,p)\rg|\lesssim |q-p|^{M+1},
\end{equation}
 hold for all $p\in\Gamma$ at which the Taylor-like interpolation conditions~\eqref{eq:cond_SL} are satisfied, regardless of the surface parametrization underlaying~\eqref{eq:cond_SL} (see Remark~\ref{rem:taylor}). These estimates imply, in turn, that
\begin{equation*}\label{eq:control_ho_kernel}
\left|\frac{\partial G(p,q)}{\partial\bnor(q)}\lf\{\varphi(q)-\Phi(q,p)\rg\}\right|\lesssim |q-p|^{M}\mbox{ and } \lf|G(p,q)\lf\{{\rm i}\eta\varphi(q)-\Phi_n(q,p)\rg\}\rg|\lesssim |q-p|^{M}.
\end{equation*}
Therefore, from the estimates above we conclude that the proposed procedure effectively regularizes the singularities of the kernels of the boundary integral operators in equation~\eqref{eq:BW_hoss} provided  $\Phi$ satisfies the Taylor-like interpolation conditions~\eqref{eq:cond_SL} for~$M\geq0$.

Similarly, taking the exterior normal derivative of the expression~\eqref{eq:green_field_hoss} we obtain that the BM integral equation~\eqref{BM} can be equivalently  expressed as
 \begin{equation}\label{eq:BM_hoss}\begin{split}
\frac{1}{2}\lf\{{\rm i}\eta\varphi(q)-\Phi_n(q,p)\rg\}+ \int_{\Gamma}\frac{\p^2 G(p,q)}{\p\bnor(p)\partial\bnor(q)}\lf\{\varphi(q)-\Phi(q,p)\rg\}\de s(q)&\\
-\int_\Gamma \frac{\p G(p,q)}{\p \bnor(p)}\lf\{{\rm i}\eta\varphi(q)-\Phi_n(q,p)\rg\}\de s(q)=-\frac{\partial u^{\inc}(p)}{\partial\bnor(p)}&\ \mbox{ for all } p\in\Gamma,\end{split}
\end{equation}
where the integrands satisfy
\begin{equation*}\label{eq:control_ho_kernel_2}\begin{split}
\left|\frac{\p^2 G(p,q)}{\p\bnor(p)\partial\bnor(q)}\lf\{\varphi(q)-\Phi(q,p)\rg\}\right|\lesssim& |q-p|^{M-2}\quad\mbox{and}\\ \lf|\frac{\p G(p,q)}{\p \bnor(p)}\lf\{{\rm i}\eta\varphi(q)-\Phi_n(q,p)\rg\}\rg|\lesssim& |q-p|^{M},\end{split}
\end{equation*}
and are at least bounded provided $\Phi$ satisfies the Taylor-like interpolation conditions~\eqref{eq:cond_SL} at $p\in\Gamma$ for $M\geq 2$.

Finally, we apply the proposed density interpolation technique to the combined field potential~\eqref{eq:layer_pot} at observation points $\ner\in\R^3\setminus\Gamma$ near the boundary~$\Gamma$. Letting $p=p^*={\rm arg}\min_{q\in\Gamma}|\ner-q|\in\Gamma$ in the formula~\eqref{eq:green_field_hoss} for the combined field potential,  we obtain that the corresponding integrands in~\eqref{eq:green_field_hoss} satisfy
\begin{equation*}
\left|\frac{\partial G(\ner,q)}{\partial\bnor(q)}\{\varphi(q)-\Phi(q,p^*)\}\right|\lesssim \frac{|q-p^*|^{M+1}}{|q-\ner|^2}\leq |q-p^*|^{M-1}
\end{equation*}
and
\begin{equation*}
 \lf|G(\ner,q)\{{\rm i}\eta\varphi(q)-\Phi_n(q,p^*)\}\rg|\lesssim\frac{|q-p^*|^{M+1}}{|q-\ner|}\leq |q-p^*|^M,
\end{equation*}  provided $\Phi:\R^3\times\Gamma\to\C$ interpolates~$\varphi$---in the sense of the conditions in~\eqref{eq:cond_SL}---at a nearly singular point $p=p^*\in\Gamma$. Clearly, for sufficiently large interpolation orders~$M$, not only the integrands vanish at $p^*$, also their derivatives do. Finally, we mention that kernel-regularized expressions for the gradient of the combined field potential can be obtained by direct differentiation of~\eqref{eq:green_field_hoss}. 

 As we will see in the next section (and in numerical results presented Section~\ref{eq:numerical_results}),  expressions  for the potential and its normal derivative stemming  from~\eqref{eq:green_field_hoss} with $p=p^*={\rm arg}\min_{q\in\Gamma}|\ner-q|$, can be exploited to produce kernel-regularized operators for problems involving multiple obstacles that are close or even intersecting each other.

\begin{remark}\label{rem:eval_sep} Derivations similar to the ones presented above  can be carried out to produce kernel-regularized expressions for all four integral operators of Calder\'on calculus~\eqref{eq:int_op}. In fact, such expressions for the double-layer and hypersingular operators are given by the left-hand-side of~\eqref{eq:BW_hoss} and~\eqref{eq:BM_hoss}, respectively, that result from setting $\eta=0$ where~$\Phi$ must satisfy~\eqref{eq:cond_SL} with the corresponding  $\eta=0$ value. Similarly,  kernel-regularized expressions for the single-layer and adjoint double-layer operators are given by the left-hand-side of~\eqref{eq:BW_hoss} and~\eqref{eq:BM_hoss}, respectively, that result from dividing them by $-\im\!\eta$ and taking the limit~$\eta\to\infty$ where $\Phi$ must satisfy~\eqref{eq:cond_SL}  with the corresponding  $(-\im\!\eta)^{-1}=0$ value. 
\end{remark}

\begin{remark} Maue's  formula~\cite[Theorem 2.23]{COLTON:1983} provides  an alternative expression for the hypersingular operator. In fact, for a sufficiently regular surface $\Gamma$ and density function $\varphi$, the hypersingular operator can be equivalently expressed as 
 \begin{equation}\label{eq:Maue}\begin{split}
(N\varphi)(p)
=&\,k^2\int_{\Gamma}G(p,q)(\bnor(p)\cdot\bnor(q))\varphi(q)\de s(q)\\
&+{\rm p.v.}\int_\Gamma  \vv{\curl}_{\Gamma}^{p}G(p,q)\cdot\vv{\curl}^q_{\Gamma}\varphi(q)\de s(q),\end{split}
\end{equation}
where tangential rotational operator at a point $p\in\Gamma$ is defined as $\vv{\curl}^p_\Gamma= -\bnor(p)\wedge {\rm grad}^p_\Gamma $ in terms of the surface gradient~\eqref{eq:grad_surf} at $p\in\Gamma$ which is here denoted as ${\rm grad}_\Gamma^p$. As usual the initials p.v. in front of the integral sign stand for principal value integral. Using Maue's formula~\eqref{eq:Maue} we hence obtain that the BM integral equation~\eqref{BM} can be alternatively expressed as 
 \begin{eqnarray}
\frac{1}{2}\lf\{{\rm i}\eta\varphi(q)-\Phi_n(q,p)\rg\}+k^2\int_{\Gamma} G(p,q) \bnor(p)\cdot\bnor(q)\lf\{\varphi(q)-\Phi(q,p)\rg\}\de s(q)\label{eq:BM_hoss_reg}\\
\hspace{-0.2cm}+ \int_{\Gamma}\vv{\curl}_{\Gamma}^p G(p,q)\cdot\vv{\curl}_\Gamma^q\lf\{\varphi(q)-\Phi(q,p)\rg\}\de s(q)-\int_\Gamma \frac{\p G(p,q)}{\p \bnor(p)}\lf\{{\rm i}\eta\varphi(q)-\Phi_n(q,p)\rg\}\de s(q)\nonumber\\
=-\frac{\partial u^{\inc}(p)}{\partial\bnor(p)}\quad \mbox{ for all } p\in\Gamma,\nonumber
\end{eqnarray}
where the most singular integrand satisfies 
$$\left|\vv{\curl}_\Gamma^p G(p,q)\cdot\vv{\curl}_\Gamma^q\{\varphi(q)-\Phi(q,p)\}\right|\lesssim |q-p|^{M-2},$$  and is at least bounded provided $M\geq 2$. Both~\eqref{eq:BM_hoss} and~\eqref{eq:BM_hoss_reg} forms of the BM integral equation are considered in the numerical examples presented in Section~\ref{eq:numerical_results} below.
\end{remark} 
\begin{remark}\label{rem:hyper}
Yet another expression for the hypersingular operator can be easily derived from~\eqref{eq:Maue} by ``moving" the operator $\vv{\curl}_\Gamma^p$  outside the surface integral, i.e., 
\begin{equation}\label{eq:move_der}
{\rm p.v.}\int_\Gamma  \vv{\curl}_{\Gamma}^{p}G(p,q)\cdot\vv{\curl}^q_{\Gamma}\varphi(q)\de s(q) =-\curl^p_\Gamma \int_\Gamma  G(p,q)\vv{\curl}^{q}_{\Gamma}\varphi(q)\de s(q),
\end{equation}
where the scalar rotational operator on the right-hand-side is defined as $\curl_\Gamma^p=\bnor(p)\cdot {\rm grad}_{\Gamma}^{p}$. It thus follows from~\eqref{eq:move_der} that the hypersingular operator can be evaluated by applying the proposed technique to the single-layer operator alone, although separate Taylor-like interpolants for each one of the three components of $\vv{\curl}_\Gamma^q\varphi(q)$ and $\bnor(q)\varphi(q)$ are needed. This approach is utilized in Section~\ref{eq:numerical_results} to produce accurate BEM discretizations of the BM integral equation using $M=0$ and~$1$.
\end{remark}

\subsection{Multiple-scattering approach to scattering by composite surfaces} \label{sec:composites} Let $\Omega_j$, $j=1,2$, be open and simply connected domains  with smooth boundaries $\Gamma_j=\p\Omega_j$. Suppose $\Omega\subset\R^3$ is given by the union $\Omega = \Omega_1\cup\Omega_2$ where $\Omega_1\cap\Omega_2\neq \emptyset$. For the sake of conciseness and simplicity we focus here on the exterior Dirichlet problem~\eqref{eq:Dirichlet} which we proceed to formulate as a multiple scattering problem encompassing the two obstacles $\Omega_1$ and $\Omega_2$. 
This formulation is advantageous in many practical applications where suitable discrete representations of the ``combined surface" $\Gamma=\p (\Omega_1\cup\Omega_2)$ (in terms of surfaces meshes in the case BEM or manifold representations in terms of coordinate patches in the case Nystr\"om methods) are difficult to produce, but separate discretizations of its component parts, $\Gamma_1=\p\Omega_1$ and $\Gamma_2=\p\Omega_2$, are easy to generate.  An important example in this regard are Van der Waals molecular surfaces which are given by union of a typically large number of spherical atoms~(e.g.~\cite{chen2010tmsmesh}).

Instead of considering  the BW integral equation~\eqref{BW} on  $\Gamma=\p(\Omega_1\cup\Omega_2)$, we  pose it on $\tilde\Gamma=\Gamma_1\cup\Gamma_2$.  Letting 
$\tilde\varphi:\tilde\Gamma\to\C$ be a density function we  look for the scattered field in the form of the combined field potential  
$u^s(\ner) = (\tilde{\mathcal D}\tilde\varphi)(\ner) - \im\!\eta(\tilde{\mathcal S}\tilde\varphi)(\ner)$ where~$\tilde{\mathcal D}$ and $\tilde{\mathcal S}$ are the double- and single-layer potentials in~\eqref{eq:LPots}, but defined in terms of  surface integrals over~$\tilde \Gamma$. The enforcement of the Dirichlet boundary condition on~$\tilde\Gamma$  yields the integral equation
\begin{equation}\label{eq:IE_sep}
\frac{1}{2}\lf(\begin{array}{c}\varphi_1\\\varphi_2\end{array}\rg)  + \lf(\begin{array}{cc} K_{11}-\im\!\eta S_{11}&K_{12}-\im\!\eta S_{12} \\
K_{21}-\im\!\eta S_{21} & K_{22} - \im\!\eta S_{22}
\end{array}\rg)\lf(\begin{array}{c}\varphi_1\\\varphi_2\end{array}\rg) = \lf(\begin{array}{c}f_1\\ f_2\end{array}\rg)
\end{equation}
on $\tilde \Gamma$ for the unknown density function $\tilde\varphi$, where $\varphi_j=\tilde\varphi|_{\Gamma_j}$ and $f_j|_{\Gamma_j} = -u^\inc|_{\Gamma_j}$, $j=1,2$. The  operators $S_{ij}$ and $K_{ij}$, $i,j=1,2,$ in~\eqref{eq:IE_sep} are the single- and double-layer operators in~\eqref{eq:int_op} but defined in terms of boundary integrals over $\Gamma_i$ and target points $p\in\Gamma_j$. 
Note that the Dirichlet data $(f_1,f_2)$ in~\eqref{eq:IE_sep} requires the incident field $u^\inc$ to be defined on $\tilde\Gamma\setminus\Gamma$. Typically, $u^\inc$ is given by an explicit expression that can be directly evaluated almost everywhere in $\R^3$ including $\tilde\Gamma$. If that is not the case,~$(f_1,f_2)$ can be defined by simply extending $u^\inc$ to $\tilde\Gamma\setminus\Gamma$ by zero. Uniqueness of solutions for the integral equation~\eqref{eq:IE_sep} is stablished in Appendix~\ref{app:uniqueness}.

In order to evaluate the integral operators in~\eqref{eq:IE_sep} we apply the proposed technique to each one of the operators involving integration over the closed  surfaces $\Gamma_1$ and $\Gamma_2$. This is achieved by regularizing $I/2+K_{ij}-\im\!\eta S_{ij}$ for $i=j$ as an integral operator acting on $\Gamma_i$ using~\eqref{eq:BW_hoss}, and regularizing $K_{i,j}-\im\eta S_{ij}$  for $i\neq j$ as a layer potential that involves integration over $\Gamma_i$ and evaluation at target points $\ner\in\Gamma_j$ using~\eqref{eq:green_field_hoss}. 

The effectiveness of this approach is demonstrated by numerical examples based on the BEM presented in Section~\ref{eq:numerical_results}. A more extensive study of the multiple-scattering approach to scattering by composite surface---that,  in particular,  will include the BM integral equation~\eqref{BM}---will be presented in a future contribution. 

In the next section we present an explicit construction of families of functions $\Phi$ such that the conditions~\eqref{eq:cond_SL} are satisfied for any given smooth function $\varphi$.

\section{Interpolating functions}\label{sec:interpolating_functions}
This section is devoted to the construction of the functions $\Phi$ introduced in the previous section. Specifically, we look for expressions of the kind
\begin{equation}
\Phi(\ner,p) :=\sum_{|\alpha|=0}^{M}\p^\alpha\varphi(p)\Phi^{(1)}_\alpha(\ner,p) + {\rm i}\eta\sum_{|\alpha|=0}^{M}\p^\alpha\varphi(p)\Phi^{(2)}_\alpha(\ner,p),\label{eq:interpolants}
\end{equation}
where the derivatives $\p^\alpha \varphi$ are defined in~\eqref{eq:HO_der} and where the expansion  functions $\Phi^{(1)}_\alpha$ and $\Phi^{(2)}_\alpha$  are taken to be linear combinations of planewaves:
\begin{equation}
\Phi^{(1)}_{\alpha}(\ner,p) :=\sum_{\ell=1}^{L} a_{\ell,\alpha}(p)W_\ell(\ner-p)\andtext \Phi^{(2)}_{\alpha}(\ner,p) :=\sum_{\ell=1}^{L} b_{\ell,\alpha}(p)W_\ell(\ner-p),\label{eq:LCPW}
\end{equation}
where $W_\ell(\ner-p)=\exp\lf\{{\rm i}k\bol d_\ell\cdot\lf(\ner-p\rg)\rg\}$, $\ell=1,\ldots,L$, have distinct directions $\bol d_\ell$ ($|\bol d_\ell|=1$). The directions $\bol d_\ell$ may depend on $p\in\Gamma$, in which case we make the dependence explicit in the notation $\bol d_\ell=\bold d_{\ell}(p)$. Clearly, any linear combination of the form~\eqref{eq:LCPW} amounts to a homogeneous solution of the Helmholtz equation in the variable $\ner$. 

 The following lemma, whose proof follows directly from Taylor's theorem in two-dimensions,  establishes simple point conditions on the traces of the expansion functions~\eqref{eq:LCPW} that guarantee that the interpolation requirements \eqref{eq:cond_SL} are satisfied.

\begin{lemma}\label{lem:point_cond}  Let $\Phi:\R^3\times\Gamma\to\C$ be given by~\eqref{eq:interpolants}, where $\Phi^{(j)}_\alpha:\R^3\times\Gamma\to\C$, $|\alpha|\leq M$, $j=1,2$, are the linear combinations of planewaves defined in~\eqref{eq:LCPW}. Then, sufficient conditions for $\Phi$ to satisfy~\eqref{eq:cond_SL} at $p\in\Gamma$, are that the Dirichlet trace $\Phi^{(j)}_{\alpha}(q,p)$ and the  Neumann trace $\Phi^{(j)}_{n,\alpha}(q,p)=\lim_{\epsilon\to 0}\nabla_{\ner} \Phi^{(j)}_{\alpha}(q+\epsilon \bnor(q),p)\cdot \bnor(q)$, where $q\in\Gamma$ and $j=1,2$, satisfy
\begin{subequations}\begin{equation}
\p^{\beta} \Phi^{(1)}_{\alpha}(p,p) =\lf\{\begin{array}{ccl} 1&\mbox{if}&\beta=\alpha,\\
0&\mbox{if}& \beta\neq\alpha,\end{array}\rg.\quad\qquad \p^{\beta} \Phi^{(1)}_{n,\alpha}(p,p) =0,\label{eq:point_conditions_V}
\end{equation}
\begin{equation}
\p^{\beta} \Phi^{(2)}_{\alpha}(p,p) =0\andtext \p^{\beta} \Phi^{(2)}_{n,\alpha}(p,p) =\lf\{\begin{array}{ccl} 1&\mbox{if}&\beta=\alpha,\\
0&\mbox{if}& \beta\neq\alpha,\end{array}\rg.\label{eq:point_conditions_U}
\end{equation}\label{eq:point_conditions}\end{subequations}
for all sub-indices $\beta\in\mathbb{Z}^2_+$ such that  $|\beta|\leq M$.
\end{lemma}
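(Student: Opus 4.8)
The plan is to verify the interpolation requirements~\eqref{eq:cond_SL} by substituting the definition~\eqref{eq:interpolants} directly and exploiting two structural facts: that taking Dirichlet/Neumann traces as in~\eqref{eq:traces} and taking surface derivatives $\p^\beta$ are linear operations in the variable $q$, and that the coefficients $\p^\alpha\varphi(p)$ appearing in~\eqref{eq:interpolants} are independent of $q$ (they depend only on the fixed point $p$). Once the definitions are unwound, the whole argument reduces to a term-by-term matching governed by the point conditions~\eqref{eq:point_conditions}.

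First I would compute the Dirichlet trace of $\Phi$. Since the sum in~\eqref{eq:interpolants} is finite and the coefficients $\p^\alpha\varphi(p)$ do not depend on $q$, the trace limit in~\eqref{eq:traces} passes through the sum, yielding $\Phi(q,p)=\sum_{|\alpha|\le M}\p^\alpha\varphi(p)\,\Phi^{(1)}_\alpha(q,p)+\im\eta\sum_{|\alpha|\le M}\p^\alpha\varphi(p)\,\Phi^{(2)}_\alpha(q,p)$, where $\Phi^{(j)}_\alpha(q,p)$ are the Dirichlet traces of the expansion functions~\eqref{eq:LCPW}. Applying any surface derivative $\p^\beta$ with $|\beta|\le M$ in the variable $q$ — which again commutes with the constant-in-$q$ coefficients — and then evaluating at $q=p$, I would invoke the point conditions~\eqref{eq:point_conditions_V} and~\eqref{eq:point_conditions_U}: the first set collapses the $\Phi^{(1)}$-sum to the single surviving term $\alpha=\beta$, contributing $\p^\beta\varphi(p)$, while the second set annihilates the entire $\Phi^{(2)}$-sum. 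Hence $\p^\beta\Phi(q,p)\big|_{q=p}=\p^\beta\varphi(p)$, so that $\p^\beta\{\varphi(q)-\Phi(q,p)\}$ vanishes at $q=p$ for every $|\beta|\le M$. Because $\varphi$ is $(M+1)$-times continuously differentiable at $p$ and the planewaves are entire, these derivative-differences are continuous near $p$, so their limit as $q\to p$ equals their value at $p$, which establishes~\eqref{eq:cond_1_SL}.

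The Neumann condition~\eqref{eq:cond_2_SL} follows by the same reasoning applied to the Neumann trace: linearity gives $\Phi_n(q,p)=\sum_{|\alpha|\le M}\p^\alpha\varphi(p)\,\Phi^{(1)}_{n,\alpha}(q,p)+\im\eta\sum_{|\alpha|\le M}\p^\alpha\varphi(p)\,\Phi^{(2)}_{n,\alpha}(q,p)$, and now the roles of the two point-condition sets are reversed — \eqref{eq:point_conditions_V} annihilates the $\Phi^{(1)}_{n,\alpha}$ contribution while~\eqref{eq:point_conditions_U} collapses the $\Phi^{(2)}_{n,\alpha}$ sum to the term $\alpha=\beta$. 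This gives $\p^\beta\Phi_n(q,p)\big|_{q=p}=\im\eta\,\p^\beta\varphi(p)$, so that $\p^\beta\{\im\eta\varphi(q)-\Phi_n(q,p)\}$ vanishes at $q=p$ for all $|\beta|\le M$, and continuity upgrades this to the required limit.

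I do not expect a genuine obstacle: after unwinding the definitions the statement is an immediate consequence of linearity and the prescribed point values. The only point demanding care is the justification that the tangential derivatives $\p^\beta$, the normal trace limit $\varepsilon\to 0$ in~\eqref{eq:traces}, and the limit $q\to p$ may all be freely interchanged. This is legitimate precisely because each $\Phi^{(j)}_\alpha(\ner,p)$ is a finite linear combination of planewaves, hence real-analytic in $\ner$, and $\varphi$ is assumed $C^{M+1}$ at $p$; the finiteness of the sums in~\eqref{eq:interpolants} is what allows the trace and differentiation operators to be moved inside term by term. Finally, following Remark~\ref{rem:taylor}, I would note that the derivatives in~\eqref{eq:cond_SL} are taken in an arbitrary local chart, so the conclusion is independent of the particular parametrization used to state the point conditions~\eqref{eq:point_conditions}.
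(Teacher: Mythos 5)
Your proposal is correct and matches the paper's (unstated) argument: the paper simply asserts that the lemma ``follows directly from Taylor's theorem in two dimensions,'' and your term-by-term verification---linearity of the trace and derivative operators in $q$, the Kronecker-delta collapse of the sums via~\eqref{eq:point_conditions}, and continuity to pass from the value at $q=p$ to the limit in~\eqref{eq:cond_SL}---is precisely the intended direct computation.
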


The following two sections address the problem of finding explicit expressions for~$\Phi^{(1)}_{\alpha}$ and $\Phi^{(2)}_{\alpha}$, $|\alpha|\leq M$, by utilizing the point conditions~\eqref{eq:point_conditions}. 

\subsection{Closed-form planewave expansion functions in the case $M=1$}\label{Meq1}

In this section we find closed-form expressions for the families of  functions~$\{\Phi^{(1)}_\alpha\}_{|\alpha|\leq M}$ and $\{\Phi^{(2)}_\alpha\}_{|\alpha|\leq M}$, defined in~\eqref{eq:LCPW}, whose traces satisfy  the requirements in Lemma~\ref{lem:point_cond} for the interpolation order $M=1$.

We thus search for functions~$\{\Phi^{(1)}_\alpha\}_{|\alpha|\leq 1}$ and $\{\Phi^{(2)}_\alpha\}_{|\alpha|\leq 1}$ that are linear combinations of planewaves whose directions $\bol d_\ell$ depend on $p\in\Gamma$, that is $W_\ell(\ner-p)=\exp\lf\{{\rm i}k\bold d_\ell(p)\cdot\lf(\ner-p\rg)\rg\}$, $\ell=1,\ldots,L$. The planewave directions $\bold d_\ell(p)$ are expressed in terms of the basis $\{{\bol\tau}_1(p),{\bol\tau}_2(p),\bnor(p)\}$, that is
\[
\bold d_\ell(p)=d_{\ell,1}{\bol\tau}_1(p)+d_{\ell,2}{\bol\tau}_2(p)+d_{\ell,3}\bnor(p),\quad |\bold d_\ell(p)|=1,
\]
where the unitary contravariant vectors ${\bol\tau}_j(p), j=1,2,$ are defined by
\begin{equation}\label{eq:tang_vecs}
{\bol\tau}_1 :=\sqrt{\frac{g}{g_{22}}}\mathbf{e}^1\andtext {\bol\tau}_2 :=\sqrt{\frac{g}{g_{11}}}\mathbf{e}^2,
\end{equation}
in terms of  the Riemann metric tensor $(g_{ij})$ in~\eqref{eq:metric_tensor} and its determinant $g$. Note that we dropped the dependence on $p$ of all the quantities in equations~\eqref{eq:tang_vecs}, as there is no risk of confusion.

 We begin by defining, for $p\in\Gamma$ and a direction $\bold d(p)$ (which may not be unitary), the following functions
\begin{equation}\label{eq:trig_fnc}
S(\ner,p,\bold d(p)) :=\sin\lf(k\bold d(p)\cdot(\ner-p)\rg)\mbox{ and }
C(\ner,p,\bold d(p)) :=\cos\lf(k\bold d(p)\cdot(\ner-p)\rg)
\end{equation}
for $\ner\in\mathbb{R}^3$. Clearly, for unitary directions $\bold d_1(p)$ and $\bold d_2(p)$, the products 
$$C\lf(\cdot,\cdot,\frac{\bold d_1(p)}{\sqrt{2}}\rg)C\lf(\cdot,\cdot,\frac{\bold d_2(p)}{\sqrt{2}}\rg),\quad C\lf(\cdot,\cdot,\frac{\bold d_1(p)}{\sqrt{2}}\rg)S\lf(\cdot,\cdot,\frac{\bold d_2(p)}{\sqrt{2}}\rg), \mbox{ and }$$
$$ S\lf(\cdot,\cdot,\frac{\bold d_1(p)}{\sqrt{2}}\rg)S\lf(\cdot,\cdot,\frac{\bold d_2(p)}{\sqrt{2}}\rg)$$ are linear combinations of planewaves of the form~\eqref{eq:LCPW} with (unitary) directions $\frac{1}{\sqrt{2}}(\bold d_1(p)+\bold d_2(p))$ and $\frac{1}{\sqrt{2}}(\bold d_1(p)-\bold d_2(p))$  provided $\bold d_1(p)\cdot \bold d_2(p)=0$.

The following lemma introduces the sought linear combinations of planewaves: 
\begin{lemma} Let $S$ and $C$ be the functions defined in~\eqref{eq:trig_fnc} and denotes by $L=-\mathbf{e}_1\cdot\p_1\bnor=\p^2_1\bnex\cdot \bnor$, $M=-\mathbf{e}_1\cdot\p_2\bnor=-\mathbf{e}_2\cdot\p_1\bnor=\p_1\p_2\bnex\cdot \bnor,$ and $N=-\mathbf{e}_2\cdot\p_2\bnor=\p^2_2\bnex\cdot\bnor$  the second fundamental form coefficients at $p\in\Gamma$. Then, the Dirichlet and Neumann traces of
\begin{subequations}\label{eq:U2}\begin{eqnarray}
\Phi^{(2)}_{(0,0)}(\ner,p) &:=&\frac{1}{k}S\lf(\ner,p,\bnor(p)\rg),\\
\Phi^{(2)}_{(1,0)}(\ner,p) &:=&\frac{2}{k^2}\sqrt{\frac{g_{22}}{g}}S\lf(\ner,p,\frac{\bnor(p)}{\sqrt{2}}\rg)S\lf(\ner,p,\frac{\btau_1(p)}{\sqrt{2}}\rg), \\
\Phi^{(2)}_{(0,1)}(\ner,p) &:=&\frac{2}{k^2}\sqrt{\frac{g_{11}}{g}}S\lf(\ner,p,\frac{\bnor(p)}{\sqrt{2}}\rg)S\lf(\ner,p,\frac{\btau_2(p)}{\sqrt{2}}\rg),
\end{eqnarray}\end{subequations}
and
\begin{subequations}\label{eq:U1}\begin{eqnarray}
\Phi^{(1)}_{(0,0)}(\ner,p) &:=&C\lf(\ner,p,\bnor(p)\rg),\\
\Phi^{(1)}_{(1,0)}(\ner,p) &:=&\frac{1}{k}\sqrt{\frac{g_{22}}{g}}S\lf(\ner,p,\btau_1(p)\rg)-\lf\{\frac{g_{12}M-g_{22}L}{g}\rg\}\Phi^{(2)}_{(1,0)}(\ner,p)-\\
&&\lf\{\frac{g_{12}N-g_{22}M}{g}\rg\}\Phi^{(2)}_{(0,1)}(\ner,p),\nonumber\\
\Phi^{(1)}_{(0,1)}(\ner,p) &:=&\frac{1}{k}\sqrt{\frac{g_{11}}{g}}S\lf(\ner,p,\btau_2(p)\rg)-\lf\{\frac{g_{12}L-g_{11}M}{g}\rg\}\Phi^{(2)}_{(1,0)}(\ner,p)-\\
&&\lf\{\frac{g_{12}M-g_{11}N}{g}\rg\}\Phi^{(2)}_{(0,1)}(\ner,p),\nonumber
\end{eqnarray}\end{subequations}
 satisfy the requirements~\eqref{eq:point_conditions_U} and~\eqref{eq:point_conditions_V} of Lemma~\ref{lem:point_cond}, respectively,  for $M=1$.
\end{lemma}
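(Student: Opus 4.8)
The plan is to verify the point conditions \eqref{eq:point_conditions_U} and \eqref{eq:point_conditions_V} by direct computation, using that for $M=1$ only the value and the two first-order surface derivatives $\partial_1,\partial_2$ of each Dirichlet and Neumann trace must be checked at $q=p$. First I would record the elementary facts that drive every case: at $q=p$ one has $\sin(k\bold d\cdot(q-p))=0$ and $\cos(k\bold d\cdot(q-p))=1$ for any direction $\bold d$; differentiating $k\bold d(p)\cdot(\mathbf x(x_1,x_2)-p)$ along the surface gives $\partial_i[k\bold d\cdot(q-p)]=k\,\bold d\cdot\mathbf e_i$, evaluated with $\mathbf e_i=\mathbf e_i(p)$ at $q=p$; and the prefactors, metric quantities and directions $\bnor(p),\btau_1(p),\btau_2(p)$ are frozen at $p$, hence constant in the surface variable $q$. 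I would also invoke $\bnor\cdot\mathbf e_i=0$, $\btau_j\cdot\bnor=0$, $\bnor\cdot\partial_i\bnor=0$ (the last from $|\bnor|^2\equiv 1$), and the contravariant duality $\mathbf e^i\cdot\mathbf e_j=\delta^i_j$, which yields $\btau_1\cdot\mathbf e_i=\sqrt{g/g_{22}}\,\delta_{1i}$ and $\btau_2\cdot\mathbf e_i=\sqrt{g/g_{11}}\,\delta_{2i}$.

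The cases $\alpha=(0,0)$ are then immediate. The Dirichlet trace of $\Phi^{(1)}_{(0,0)}=C(\ner,p,\bnor(p))$ equals $1$ at $q=p$ with vanishing first derivatives (a $\sin$-factor appears), while its Neumann trace $-k\sin(\cdots)\,\bnor\cdot\bnor(q)$ and its first derivatives vanish because each term carries a $\sin$-factor or a $\bnor\cdot\mathbf e_i$ factor. Dually, $\Phi^{(2)}_{(0,0)}=k^{-1}S(\ner,p,\bnor(p))$ has Dirichlet trace vanishing through first order, and Neumann trace $\cos(\cdots)\,\bnor\cdot\bnor(q)$ equal to $1$ at $q=p$, with first derivatives killed by $\sin=0$ and $\bnor\cdot\partial_i\bnor=0$.

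Next, the functions $\Phi^{(2)}_{(1,0)},\Phi^{(2)}_{(0,1)}$, being products of two sines, vanish together with their first surface derivatives on the Dirichlet side (each product-rule term still carries a $\sin$-factor), so that part of \eqref{eq:point_conditions_U} holds. For their Neumann traces, only the term in which the $\bnor$-sine is differentiated in $\ner$ survives at $q=p$, and its first surface derivative isolates $\btau_1\cdot\mathbf e_i$ (resp. $\btau_2\cdot\mathbf e_i$); the normalization $\tfrac{2}{k^2}\sqrt{g_{22}/g}$ (resp. $\tfrac{2}{k^2}\sqrt{g_{11}/g}$) combined with $\btau_1\cdot\mathbf e_i=\sqrt{g/g_{22}}\,\delta_{1i}$ produces exactly the Kronecker pattern required. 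The same bookkeeping gives the Dirichlet conditions \eqref{eq:point_conditions_V} for $\Phi^{(1)}_{(1,0)},\Phi^{(1)}_{(0,1)}$: since the $\Phi^{(2)}$ corrections have Dirichlet traces vanishing through first order, only the $k^{-1}\sqrt{g_{22}/g}\,S(\ner,p,\btau_1)$ term contributes, and $\btau_1\cdot\mathbf e_i=\sqrt{g/g_{22}}\,\delta_{1i}$ again yields the required $\delta_{\alpha\beta}$.

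The only substantive point, and the step I expect to be the \emph{main obstacle}, is the Neumann condition $\partial^\beta\Phi^{(1)}_{n,\alpha}(p,p)=0$ for $\alpha=(1,0),(0,1)$, which is precisely what forces the second fundamental form corrections. The Neumann trace of the tangential term is $\sqrt{g_{22}/g}\cos(k\btau_1\cdot(q-p))\,\btau_1\cdot\bnor(q)$, which vanishes at $q=p$ but whose surface derivative is nonzero because the normal varies: $\partial_i[\btau_1\cdot\bnor(q)]|_{q=p}=\btau_1\cdot\partial_i\bnor$. Expanding $\btau_1=\sqrt{g/g_{22}}\,\mathbf e^1=\sqrt{g/g_{22}}\,(g_{22}\mathbf e_1-g_{12}\mathbf e_2)/g$ and substituting $\mathbf e_1\cdot\partial_1\bnor=-L$, $\mathbf e_1\cdot\partial_2\bnor=\mathbf e_2\cdot\partial_1\bnor=-M$, $\mathbf e_2\cdot\partial_2\bnor=-N$, I would find that $\partial_1$ of this Neumann trace equals $(g_{12}M-g_{22}L)/g$ and $\partial_2$ equals $(g_{12}N-g_{22}M)/g$. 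These are exactly the coefficients multiplying $\Phi^{(2)}_{(1,0)}$ and $\Phi^{(2)}_{(0,1)}$ in the definition of $\Phi^{(1)}_{(1,0)}$, and since $\partial_i\Phi^{(2)}_{n,(1,0)}(p,p)=\delta_{1i}$ and $\partial_i\Phi^{(2)}_{n,(0,1)}(p,p)=\delta_{2i}$ were established above, the subtraction cancels these terms, leaving $\partial^\beta\Phi^{(1)}_{n,(1,0)}(p,p)=0$; the identical computation with $\btau_2=\sqrt{g/g_{11}}\,\mathbf e^2$ handles $\Phi^{(1)}_{(0,1)}$. The delicate part throughout is keeping the metric-inverse identities $g^{11}=g_{22}/g$, $g^{12}=-g_{12}/g$, $g^{22}=g_{11}/g$ and the signs of $L,M,N$ consistent, so that the correction coefficients match the computed normal derivatives term by term.
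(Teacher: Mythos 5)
Your proposal is correct and follows essentially the same route as the paper: the paper's proof is a one-line appeal to "computation of the tangential derivatives at $\ner=p$" together with the Weingarten identities $\partial_i\bnor=-(g^{i1}\cdot)\mathbf{e}_1-(g^{i2}\cdot)\mathbf{e}_2$ expressed through $L,M,N$, which is exactly the computation you carry out in detail (your use of $\mathbf{e}_j\cdot\partial_i\bnor=-L,-M,-N$ is equivalent to those identities). Your identification of the surface derivative of $\btau_1\cdot\bnor(q)$ as the source of the second-fundamental-form correction coefficients, and the verification that they cancel against $\partial_i\Phi^{(2)}_{n,(1,0)}(p,p)=\delta_{1i}$ and $\partial_i\Phi^{(2)}_{n,(0,1)}(p,p)=\delta_{2i}$, is precisely the content the paper leaves implicit.
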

\begin{proof}
The proof follows directly from the computation of the tangential derivatives of $\Phi^{(1)}_\alpha$ and $\Phi^{(2)}_\alpha$ at $\ner=p\in\Gamma$ and the use of the identities
\cite{do2016differential}
\begin{equation}\begin{split}
\partial_1\bnor=&~-\lf(g^{11}L+g^{12}M)\mathbf{e}_1-(g^{21}L+g^{22}M\rg)\mathbf{e}_2,\\
 \partial_2\bnor=&~-\lf(g^{11}M+g^{12}N)\mathbf{e}_1-(g^{21}M+g^{22}N\rg)\mathbf{e}_2,
\end{split}\end{equation} 
 where $(g^{ij})$ denotes the inverse of the metric tensor $(g_{ij})$.
\end{proof}

The extension of the trigonometric ansatz technology utilized above to the construction of  closed-form families of functions $\{\Phi^{(1)}_\alpha\}_{|\alpha|\leq M}$ and $\{\Phi^{(2)}_\alpha\}_{|\alpha|\leq M}$ that satisfy conditions in~\eqref{eq:point_conditions} for $M\geq 2$ is challenging. In particular, additional distinct planewave directions ought to be  incorporated in the ansatz. Because of the aforementioned difficulties, we advocate for the algebraic approach presented in following section to construct high-order planewave expansion functions in the case $M\geq 2$.

\subsection{Higher-order planewave expansion functions} \label{sec:higher_order}

In this section we develop a purely algebraic algorithm to construct  expansion functions $\{\Phi^{(1)}_\alpha\}_{|\alpha|\leq M}$ and $\{\Phi^{(2)}_\alpha\}_{|\alpha|\leq M}$,  at  a given (regular) point $p\in\Gamma$.  Unlike the  analytical approach presented in the previous  section, we now select a collection of planewave directions $\{\bol d_\ell\}_{\ell=1}^{\ell=L}$,  that are independent of $p\in\Gamma$. While the desired interpolation order $M$ and the number $L$ of planewave directions are parameters in our algorithm, the planewave directions themselves can be selected either randomly or uniformly from the unit sphere in three dimensions.  

It is clear from Lemma~\ref{lem:point_cond} that in order to find the desired  expansion coefficients $\{a_{\ell,\alpha}\}_{\ell=1,|\alpha|\leq  M}^{\ell=L}$  (resp. $\{b_{\ell,\alpha}\}_{\ell=1,|\alpha|\leq  M}^{\ell=L}$) that determine the expansion functions  $\{\Phi^{(1)}_\alpha\}_{|\alpha|\leq M}$  (resp. $\{\Phi^{(2)}_\alpha\}_{|\alpha|\leq M}$) at~$p\in\Gamma,$ one has to impose the  $N=(M+1)(M+2)$ independent conditions~\eqref{eq:point_conditions_V} (resp.~\eqref{eq:point_conditions_U}) which have to be satisfied exactly. Consequently, a solvable linear system for the coefficients could be produced provided the number of planewave directions satisfies  $L\geq N$. In order to form such a linear system, we proceed to sort the $N/2$ indices $\alpha=(\alpha_1,\alpha_2)$ satisfying $\alpha_1+\alpha_2\leq M$ by introducing a bijective mapping $J:\{|\alpha|\leq M\}\to \{1,\ldots,N/2\}$.  Therefore, letting $\bol\delta_{j}$, $j=1,\ldots,N,$ denote the canonical vectors of $\R^N$ we have that  conditions~\eqref{eq:point_conditions_V} lead to the linear system 
\begin{equation}\label{eq:lin_sym1}
\boldsymbol{C}(p)\bold a_{j}(p) = \bol\delta_{j},\quad j=1,\ldots,N/2,
\end{equation}
for the coefficient vector~$\bold a_{j}=[a_{1,J^{-1}(j)},\ldots,a_{L,J^{-1}(j)}]^T\in\C^{L}$, while conditions~\eqref{eq:point_conditions_U} yield the system 
\begin{equation}\label{eq:lin_sym2}
\boldsymbol{C}(p)\bold b_{j}(p) = \bol\delta_{j+N/2},\quad j=1,\ldots,N/2,
\end{equation}
for the coefficient  vector $\bold b_{j}=[b_{1,J^{-1}(j)},\ldots,b_{L,J^{-1}(j)}]^T\in\C^{L}$, where $\boldsymbol{C}(p)$ is a $N\times L$ complex-valued matrix. Note that we have assumed in these derivations that the first $N/2$ rows of $\boldsymbol{C}(p)$ correspond to the conditions on the Dirichlet traces, while the remaining $N/2$ rows correspond to the conditions on the Neumann traces. 

In the case when $L>N$, the solution of the linear systems~\eqref{eq:lin_sym1} and~\eqref{eq:lin_sym2} must be understood in the least-squares sense. Letting $\boldsymbol{C}^\dagger(p) = \lf[{\bold c}^\dagger_{1}(p),\dots,{\bold c}^\dagger_{{N}}(p)\rg]\in\C^{L\times N}$ denote the Moore-Penrose pseudoinverse of $\boldsymbol{C}(p)$, that is $\boldsymbol{C}(p)\boldsymbol{C}^\dagger(p)=\bol I$ with  $\bol I\in \R^{N\times N}$ being the identity matrix and $\bol C(p)$ being full-rank, the unknown vectors $\bold a_{j}$ and $\bold b_{j}$ are respectively  given by 
\begin{equation}
\bold a_{j}(p) = {\bold c}^\dagger_{{j}}(p)\andtext \bold b_{j}(p) ={\bold c}^\dagger_{j+N/2}(p),\quad j=1,\dots,N/2.
\end{equation}
Note that under the assumption that $\boldsymbol{C}(p)$ is full-rank, $\bol C^{\dagger}(p)$ can be computed explicitly via the formula $\bol C^{\dagger}(p)=\bol C^*(p)\lf(\bol C(p)\bol C^*(p)\rg)^{-1}$, where $\bol C^*$ denotes the Hermitian transpose of $\bol C$.

We present in Appendix~\ref{app:explicit_M_3} a recursive approach to compute the entries of the matrices $\bol C(p)$ in the case $M=3$.

\begin{remark}
We have found difficult to prove whether for a given set of distinct planewave directions $\{\bol d_\ell\}_{\ell=1}^{\ell=L}$ the matrix $\bol C(p)\in\C^{N\times L}$ is full-rank for any $p\in\Gamma$ and will be left for future work. 
In practice, however, planewave directions selected from a ``uniform" spherical grid give rise to numerically invertible matrices $\bol C(p)\bol C^*(p)$. In detail, the  planewave directions for the construction of numerical PWDI interpolants used throughout  this paper are given by
$(\cos\theta_m\sin\phi_n,\sin\theta_m\sin\phi_n,\cos\phi_n)$ where $\theta_m=2\pi(m-1/2)/L_\theta$ for  $m=1,\ldots,L_\theta$ and $\phi_n=\pi (n-1/2)/L_\phi$ for $n=1,\ldots,L_\phi$, with $L=L_\theta\times L_\phi=2\times 2,4\times3,5\times4,6\times5$ for interpolation orders $M=0,1,2,3$, respectively. 

\end{remark}

\section{Numerical evaluation of integral operators and layer potentials}\label{sec:nyst_bem}
This section presents Nystr\"om and BEM discretization schemes based on standard quadrature rules, for the numerical evaluation of the kernel-regularized integral operators and layer potentials
associated to the combined field integral equations~\eqref{BW} and~\eqref{BM}. 
 
\subsection{Chebyshev-based Nystr\"om method}\label{sec:Nystrom}
We here briefly describe the 3D boundary integral equation method introduced in our previous contribution~\cite{HDI3D}. The surface $\Gamma$ is represented as the union $\Gamma=\bigcup_{k=1}^{N_p} \overline{\mathcal P^k}$  of non-overlapping  patches  $\mathcal P^k$, $k=1,\dots,N_p$, where $\mathcal P^k\cap\mathcal P^{l}=\emptyset$ if $k\neq l$. It is  assumed (throughout this section) that each surface patch $\mathcal P^k$ has associated a bijective~$\mathcal C^\infty$ coordinate map $\bnex^k:\mathcal H\mathcal\to \overline{\mathcal P^k}$,
\begin{equation}\label{eq:maps}
\bnex^k(\bxi) := \lf(x^k_1(\xi_1,\xi_2),x^k_2(\xi_1,\xi_2),x^k_3(\xi_1,\xi_2)\rg),\quad k=1,\ldots,N_p,\quad (\bxi=(\xi_1,\xi_2))
\end{equation} where $\mathcal H = [-1,1]\times [-1,1]\subset\R^2$. Furthermore, the coordinate maps~\eqref{eq:maps} are selected in such a way that the unit normal
$
\bnor^k(\bxi) = {\p_1\bnex^k(\bxi)\wedge  \p_2\bnex^k(\bxi)}/{|\p_1\bnex^j(\bxi)\wedge  \p_2\bnex^j(\bxi)|}
$
at the point $\bnex^k(\bxi)\in\mathcal P^k$ points outward to the surface $\Gamma$.  The surface integral of a sufficiently regular function $F:\Gamma\to\R$---such as the integrands in~\eqref{eq:BW_hoss},~\eqref{eq:BM_hoss} and~\eqref{eq:green_field_hoss}---can then be expressed as
$$
\int_{\Gamma}F(\nex)\de s = \sum_{k=1}^{N_p} \int_{\mathcal H}F\lf((\bnex^k(\bxi)\rg)|\p_1\bnex^k(\bxi)\wedge\p_2\bnex^k(\bxi)|\de \bxi=\int_{\mathcal H}f(\bxi)\de \bxi.
$$

In order to  numerically evaluated the integral above with high-precision, we utilize open Chebyshev grids in the parameter space $\mathcal{H}$. Accordingly, $\mathcal H$ is discretized  by means of the so-called  Fej\'er's first quadrature rules~\cite{davis2007methods} which  yields the approximation
\begin{equation}
\int_{\mathcal H}f(\bxi)\de \bxi \approx \sum_{i=1}^N\sum_{j=1}^N f(t_i,t_j)\omega_{i}\omega_j\label{eq:quad_rule},
\end{equation}
 where the quadrature points $t_j$ are the Chebyshev zero points 
\begin{equation}
t_j := \cos\lf(\vartheta_{j}\rg),\quad\vartheta_j := \frac{(2j-1)\pi}{2N},\quad j=1,\ldots,N,\label{eq:grid_points}
\end{equation}
and the Fej\'er quadrature weights are given by
\begin{equation}
 \omega_j:= \frac{2}{N}\lf(1-2\sum_{\ell=1}^{[N/2]}\frac{1}{4\ell^2-1}\cos(2\ell\vartheta_{j})\rg),\quad j=1,\ldots,N.\label{eq:weights}
\end{equation}

A key feature of this discretization scheme is that  the quadrature rule~\eqref{eq:quad_rule} yields spectral (super-algebraic) accuracy for integration of smooth $C^\infty(\mathcal H)$ functions. As expected, however, slower convergence rates are achieved for less regular integrands (such as surface density functions associated to problems of scattering by piecewise smooth obstacles). Yet another important feature of this discretization scheme is that all the partial derivatives of the coordinate maps $\bnex^k$, unit normals $\bnor^k,$ and functions  $\varphi(\bnex^k(\bxi))$ that are needed for the construction of the planewave density interpolant $\Phi:\R^3\times \Gamma\to\C$, can be efficiently and accurately computed at the grid points $\lf(t_i, t_j\rg)$, $1\leq i,j\leq N$,  by means FFT differentiation. More details can be found in~\cite{HDI3D}.

Finally, the proposed procedure for the numerical evaluation of the BW combined field operator $K-{\rm i}\eta S$, using the method of Section~\ref{sec:higher_order}, is summarized in Algorithm~\ref{alg:N_BW}. A completely analogous procedure can be followed for evaluation of the BM combined field operator $N-{\rm i}\eta K'$.

\begin{algorithm}
\caption{Nystr\"om evaluation of the forward map $(K-\im\!\eta S)\varphi$}
\begin{algorithmic}
\REQUIRE{Grids $\{\nex^k_{i,j}\}_{i,j=1}^{i,j=N}\subset\mathcal P^k$, $k=1,\ldots, N_p$, corresponding to the discretization of the surface $\Gamma$ using $N_p$ non-overlapping patches, generated using Chebyshev grids in the parameters space $\mathcal H$;  discrete density function $\varphi(\nex_{i,j}^k)= \phi_{i,j}^k$, $i,j=1,\ldots,N.$ $k=1,\ldots, N_p$; planewave interpolation order $M$; planewave directions $\bol d_\ell,$ $\ell=1,\ldots,L$.}
\FOR{$k$ from 1 to $N_p$}
\STATE{Compute $\p^\alpha\varphi$ of all orders $|\alpha|\leq M$ on the patch $\mathcal P^k$ using FFT-based spectral differentiation of the 2D array $\{\phi^k_{i,j}\}_{i,j=1}^{i,j=N}$\;}
\ENDFOR
\STATE{Set $I_{i,j}^k=0$ for $i,j=1,\ldots,N$ and $k=1,\ldots,N_p$}
\FOR{each grid point $\nex_{i,j}^k$}
\STATE{Generate the coefficients $a_{\ell,\alpha_r}(\nex_{i,j}^k)$ and $b_{\ell,\alpha_r}(\nex_{i,j}^k)$ for $\ell=1,\ldots,L$ and $r=1,\ldots,(M+1)(M+2)/2$}
\STATE{Compute  the interpolating function $\Phi$~\eqref{eq:interpolants} using the derivatives $\p^\alpha\varphi$ at $\nex_{i,j}^k$ and the coefficients $a_{\ell,\alpha_r}$ and $b_{\ell,\alpha_r}$}
\FOR{$m$ from 1 to $N_p$}
\STATE{Evaluate the approximate integral $I=\sum_{\nex_{p,q}^m\in \mathcal P^m} f(\nex_{p,q}^m)w^m_{p,q}\approx \int_{\mathcal P^m}f(\ney)\de s$ with $f(\ney)=-\frac{\varphi(\nex_{i,j}^k)}{2}+\frac{\p G(\nex_{i,j}^k,\ney)}{\p n(\ney)}\{\varphi(\ney)-\Phi(\ney,\nex_{i,j}^k)\}-G(\nex_{i,j}^k,\ney)\lf\{{\rm i}\eta\varphi(\ney)-\p_n\Phi(\ney,\nex_{i,j}^k)\rg\}$ using Fej\'er's quadrature rule}
\STATE{Update $I_{i,j}^k=I_{i,j}^k + I$}
\ENDFOR
\ENDFOR

\RETURN $I_{i,j}^k$ for $i,j=1,\ldots N$ and $k=1,\ldots,N_p$.
\end{algorithmic}\label{alg:N_BW}
\end{algorithm}

 \subsection{Galerkin boundary element method}\label{sec:BEM}
This section concerns the use of the proposed planewave density interpolation method within the standard second-order BEM discretization using triangular surface meshes. 

To fix ideas, we consider once again the BW boundary integral equation~\eqref{BW} which upon constructing an  appropriate density interpolation function $\Phi:\R^3\times\Gamma\to\C$, can be equivalently expressed (in strong form) as~\eqref{eq:BW_hoss}.  Throughout this section we assume that  $\Omega\subset\R^3$ is a bounded Lipschitz polyhedral domain. Therefore, associated to the surface $\Gamma=\p\Omega$ there is a triangulation $\mathcal T_h$ such that $\Gamma = \overline {\bigcup_{T\in\mathcal T_h}T}$. Note that both the single- and double-layer operators are bounded on $H^{1/2}(\Gamma)$ and that the Green's identities used in the derivation of~\eqref{eq:BW_hoss} still hold true for $\Gamma$ being Lipschitz~\cite{Mclean2000Strongly}. 
Therefore,  assuming that $u^\inc|_{\Gamma}\in H^{1/2}(\Gamma)$ we readily have that the variational formulation of~\eqref{eq:BW_hoss}---or equivalently~\eqref{BW}---reads as: Find $\varphi\in H^{1/2}(\Gamma)$ such that: 
\begin{equation}\label{eq:VF_BW}\begin{split}
 \int_{\Gamma}\lf\{\frac{1}{2}\{\varphi(p)-\Phi(p,p)\}+\int_{\Gamma}\frac{\p G(p,q)}{\p \bnor(q)}\{\varphi(q)-\Phi(q,p)\}\de s(q)\rg\}\psi(p)\de s(p)\\-\int_{\Gamma}\lf\{\int_{\Gamma}G(p,q)\{\im\!\eta\varphi(q)-\Phi_n(q,p)\}\de s(q)\rg\}\psi(p)\de s(p) = -\int_\Gamma u^\inc(p)\psi(p)\de s(p),
\end{split}
\end{equation}
for all test functions $\psi\in H^{-1/2}(\Gamma)$\footnote{We assume here the continuous extension of the standard real pairing $(u,v) = \int_{\Gamma}uv \de s$ for $u,v\in L^2(\Gamma)$ to the dual pairing $\langle\cdot,\cdot\rangle_{H^{1/2}(\Gamma)\times H^{-1/2}(\Gamma)}$.}. 
\begin{figure}[h!]\centering
\includegraphics[scale=1]{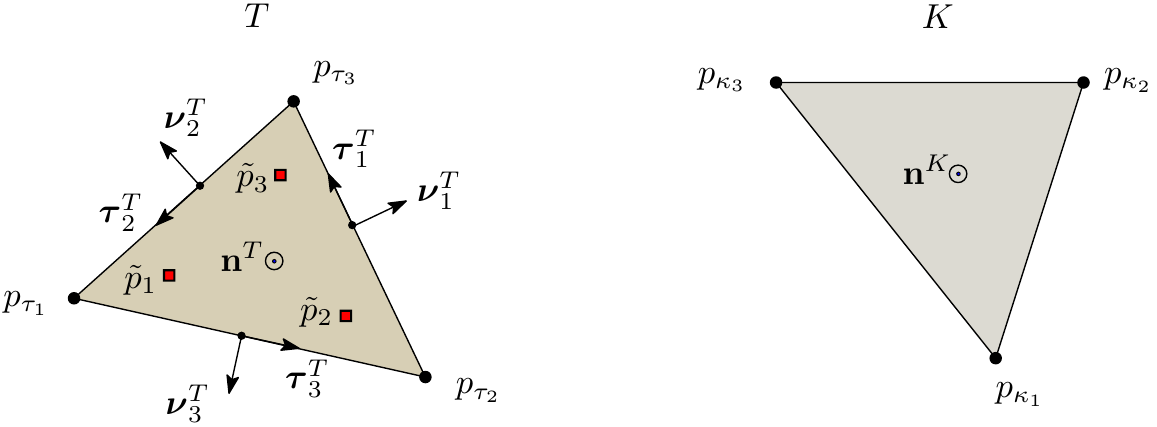}
\caption{Notation used for the the mesh triangles in the outer ($T$) and inner ($K$) integrals in~\eqref{eq:discrete_integral} and in the Algorithm~\ref{alg:BEM_BW}.}\label{fig:triangles}
\end{figure}
In order to find an approximate weak solution $\varphi_h\in H^{1/2}(\Gamma)$ of~\eqref{eq:VF_BW}, we resort to a Galerkin BEM  for which we consider the (finite-dimensional) subspace $W_h=\{v\in C(\Gamma): v|_{T}\mbox{ is a linear function }\forall T\in \mathcal T_h\}\subset H^{1/2}(\Gamma)\subset H^{-1/2}(\Gamma)$. Clearly, the set  $\{v_j\}_{j=1}^N$ of linear  polynomials  supported on  $\bigcup_{p_j\in T}T$  satisfying the condition $v_j(p_i)=\delta_{i,j}$ for all $i,j=1\ldots,N$, where $p_j$, $j=1,\ldots,N$ are the mesh nodes, forms a basis of the subspace~$W_h$. In detail, letting $p_{\tau_1},p_{\tau_2}$ and $p_{\tau_3}$ with $\tau_1,\tau_2,\tau_3\in \{1,\ldots,N\},$ denote the vertices of a triangle $T\in\mathcal T_h$, and defining the unit vectors (see Figure~\ref{fig:triangles})
$$
\bol\tau^T_1 = \frac{p_{\tau_3}-p_{\tau_2}}{|p_{\tau_3}-p_{\tau_2}|},\qquad \bol\tau^T_2 = \frac{p_{\tau_1}-p_{\tau_3}}{|p_{\tau_1}-p_{\tau_3}|},\qquad \bol\tau^T_3 = \frac{p_{\tau_2}-p_{\tau_1}}{|p_{\tau_2}-p_{\tau_1}|},
$$
$$
\bol\nu^T_1 = \bol\tau^T_1\wedge\bnor^T,\qquad \bol\nu^T_2 = \bol\tau^T_2\wedge\bnor^T,\andtext \bol\nu^T_3 = \bol\tau^T_3\wedge\bnor^T,
$$
we have that the sought approximate density function $\varphi_h\in W_h$ is given by 
\begin{equation}\label{eq:galerkin_approx}
\varphi_h(p) = \sum_{j=1}^3\varphi_h(p_{\tau_j})v_{\tau_j}(p),\quad p\in T,
\end{equation}
where the basis functions are
\begin{equation}\label{eq:basis_functions}
v_{\tau_j}(p) = 1-\frac{(p-p_{\tau_j})\cdot \bol \nu^T_j}{h^T_j}, \quad j=1,2,3,\quad p\in T,
\end{equation}
with $h_j = (p_{\tau_i}-p_{\tau_j})\cdot \bol \nu^T_j>0,$ $i\neq j$. The discrete variational formulation can thus be expressed as: Find $\varphi_h\in  W_h$ such that: 
\begin{eqnarray}
 \int_{\Gamma}v_j(p)\lf\{\frac{1}{2}\{\varphi_h(p)-u(p,p)\}+\int_{\Gamma}\frac{\p G(p,q)}{\p \bnor(q)}\{\varphi_h(q)-\Phi(q,p)\}\de s(q)\hspace{0.5cm}\rg.\label{eq:DVF}\\
-\lf.\int_{\Gamma}v_j(p)G(p,q)\{\im\!\eta\varphi_h(q)-\Phi_n(q,p)\}\de s(q)\rg\}\de s(p) = -\int_\Gamma v_j(p)u^\inc(p)\de s(p),\nonumber
\end{eqnarray}
for all basis functions  $v_j\in W_h$, $j=1,\ldots,N$. 

Consider now the term in~\eqref{eq:DVF} associated with the single-layer operator; that is
\begin{eqnarray}\int_{\Gamma} v_j(p)\lf\{\int_{\Gamma} G(p,q)\{\im\!\eta\varphi_h(q)-\Phi_n(q,p)\}\de s(q) \rg\}\de s(p)\hspace{3cm}\nonumber\\
=\sum_{T\in\mathcal T_h}\sum_{K\in\mathcal T_h}\int_{T} v_j(p)\lf\{\int_{K} G(p,q)\{\im\!\eta\varphi_h(q)-\Phi_n(q,p)\}\de s(q) \rg\}\de s(p). 
\label{eq:SLpart}\end{eqnarray} Clearly, both the outer and inner integrals in~\eqref{eq:SLpart} have to be evaluated by means of quadrature rules for which typically the vertices of  the triangules $T\in\mathcal T_h$ are used as quadrature points. Suppose the vertices $p_{\tau_\ell}$, $\ell=1,2,3$, are used as quadrature points for evaluation of the outer integral (over $T$). In order to regularize the integrand of the inner integral (over $K$) then, the normal derivative $\Phi_n$ of the planewave interpolant $\Phi$ has to approximate to sufficiently high-order the density function $\im\!\eta\varphi_h$ at the vertices  $p_{\tau_\ell}$, $\ell=1,2,3$. Unfortunately, the problem with this numerical integration scheme is that the construction of~$\Phi$ requieres the surface unit normal and surface tangent vectors to be properly defined at the vertices $p_{\tau_\ell}$, $\ell=1,2,3,$ which does not typically happen for general polyhedral surfaces.

In order to circumvent this issue we propose to use of a second-order Gauss quadrature rule for triangles that makes use of quadrature points $\tilde p_\ell$, $\ell=1,2,3,$ that lie in the interior $T^\circ$ of the triangle $T$~\cite{cowper1973gaussian,dunavant1985high}. This quadrature rule yields the approximation 
\begin{equation}\begin{split}
\int_{T} v_j(p)\lf\{\int_K G(p,q)\{\im\!\eta\varphi_h(q)-\Phi_n(q,p)\}\de s(q) \rg\}\de s(p) \hspace{3cm}\\
\approx\frac{|T|}{3}\sum_{\ell=1}^3  v_j(\tilde p_\ell)\lf\{\int_{K} G(\tilde p_\ell,q)\{\im\!\eta\varphi_h(q)-\Phi_n(q,\tilde p_\ell)\}\de s(q)\rg\},
\end{split}\label{eq:semi_discrete}\end{equation}
where the quadrature points  are given by \begin{eqnarray}
\tilde p_1= \frac{2p_{\tau_1}}{3}+\frac{p_{\tau_2}}{6}+\frac{p_{\tau_3}}{6},\  \
\tilde p_2=\frac{p_{\tau_1}}{6}+\frac{2p_{\tau_2}}{3}+\frac{p_{\tau_3}}{6},\  \ 
\tilde p_3= \frac{p_{\tau_1}}{6}+\frac{p_{\tau_2}}{6}+\frac{2p_{\tau_3}}{3}.\hspace{-0.5cm}
\label{eq:QP}\end{eqnarray} 
Since the quadrature points $\tilde p_\ell$, $\ell=1,2,3$, lie in the interior of the triangle $T$ (see Figure~\ref{fig:triangles}),  the surface unit normal and the surface tangent vectors---which are required in the construction of the planewave interpolant---are uniquely defined at those points. 

The inner integral, on the other hand, can be approximated by means of any sufficiently high-order quadrature rule. Using the standard node-based quadrature rule, for instance, we obtain
\begin{equation}\begin{split}
\int_{T} v_j(p)\lf\{\int_{K} G(p,q)\{\im\!\eta\varphi_h(q)-\Phi_n(q,p)\}\de s(q) \rg\}\de s(p) \hspace{3cm}\\
\approx\frac{|T||K|}{9}\sum_{\ell=1}^3\sum_{m=1}^3  v_j(\tilde p_\ell) G(\tilde p_\ell,p_{\kappa_m})\{\im\!\eta\varphi_h(p_{\kappa_m})-\Phi_n(p_{\kappa_m},\tilde p_\ell)\},
\end{split}\label{eq:discrete_integral}\end{equation}
where  $p_{\kappa_m}$, $m=1,2,3,$ are  the vertices of the triangle $K$ (see Figure~\ref{fig:triangles}). A completely analogous approach can be followed to evaluate the term  in~\eqref{eq:DVF} involving the double-layer operator. 

\begin{remark}Note that the terms inside the double sum on the right-hand side of~\eqref{eq:discrete_integral} are always well defined even when the triangles $T$ and $K$ coincide. The effect of the PWDI technique lies then in the regularization of the nearly-singular integral kernels that arise when the triangles $T$ and $K$ coincide or are close to each other. In the latter case, however, the effectiveness of the proposed technique is affected by the limited (piecewise planar) global regularity assumed on the surface parametrization. It is thus not worth to pursue interpolation orders $M\geq 2$ in the context of the proposed BEM for piecewise planar surface representations. \end{remark}

In what follows we describe in some detail the construction of the closed-form planewave interpolant for $M=1$ in the BEM context. As discussed in Section~\ref{Meq1} above, the construction of the planewave interpolant $\Phi:\R^3\times\Gamma\to\C$ requires the knowledge of a local smooth parametrization of the surface $\Gamma$ at and around the interpolation point $p\in\Gamma$. By construction, the interpolation point $p$ lies always in the interior of some triangle $T\in\mathcal T_h$. Therefore, the local surface parametrization has constant tangent vectors $\bold e^T_1$ and $\bold e^T_2$ that can be computed directly from the node data. In fact, selecting $\bold e^T_1 = \bol \nu^T_1$ and $\bold e^T_2 = \bol\tau^T_1$,  for instance, we have that the expressions for the planewave interpolants~\eqref{eq:U2}-\eqref{eq:U1} simplify significantly due to the fact that metric tensor becomes the identity, i.e., ~$g_{i,j} = \delta_{i,j}$, and the second fundamental form coefficients vanish, i.e., $L=M=N=0$, at $p\in T^\circ$. 

Having defined the tangent vectors $\bold e^T_1$ and $\bold e^T_2$, the surface derivatives of $\varphi_h\in W_h$ at quadrature points $\tilde p_\ell\in T^\circ$, $\ell=1,2,3,$ can be computed by direct differentiation of~\eqref{eq:galerkin_approx} which in turn involves the derivatives 
\begin{equation}\label{eq:surf_grad}
\p^\alpha v_{\tau_j}(\tilde p_\ell) = \lf\{\begin{array}{ccc}\displaystyle\frac{1+3\delta_{j,\ell}}{6}&\mbox{if} &\alpha =(0,0),\smallskip\\
\displaystyle -\frac{\bold e^T_1\cdot \bol\nu^T_j}{h^T_j}&\mbox{if}  &\alpha = (1,0),\smallskip\\
\displaystyle  -\frac{\bold e^T_2\cdot \bol\nu^T_j}{h^T_j}&\mbox{if}  &\alpha = (0,1),\smallskip\\
  0&\mbox{if}  &|\alpha|>1.
 \end{array}\right.\qquad  
\end{equation}
of the basis functions~\eqref{eq:basis_functions}.
An algorithmic description of the numerical evaluation of $(v_j,(K-\im\eta S)\varphi_h)$ for $j=1,\ldots,N$, is given in~Algorithm~\ref{alg:BEM_BW}.

\begin{algorithm}
\caption{BEM evaluation of  $( v_j,(K-{\rm i}\eta S)\varphi_h)$ for $j=1,\ldots, N$}
\begin{algorithmic}
\REQUIRE{Triangular  mesh $\mathcal T_h$ of the surface $\Gamma\subset\R^3$ consisting of $M$ triangles and $N$ mesh nodes  $\{p_j\}_{j=1}^N\subset\Gamma$; coefficients $\{\varphi_j\}_{j=1}^{N}\subset\C$ of the density function $\varphi_h(p)= \sum_{j=1}^N\varphi_{j}v_j(p)$, $p\in\Gamma$, with respect to the basis $\{v_j\}_{j=1}^N$ of piecewise linear polynomials.}
\STATE{Set $I_j=0$ for all $j=1,\ldots,N$}
\FOR{$\tau$ from 1 to $M$}
\STATE{Compute $\bold e^T_1,\bold e^T_2, \bold n^T=\bold e_1^T\wedge \bold e_2^T$ and the area $|T|$ of the $\tau$-th triangle $T$ with vertex indices $\{\tau_1,\tau_2,\tau_3\}\subset\{1,\ldots,N\}$}
\STATE{Produce $\tilde p_{1}, \tilde p_{1}$ and $\tilde p_{3}$ from the vertices $p_{\tau_1}$, $p_{\tau_2}$ and $p_{\tau_2}$ using~\eqref{eq:QP}}
\STATE{Compute $\p^{\alpha}\varphi_h$, $|\alpha|\leq M$, at $\tilde p_\ell\in T$, $\ell=1,2,3$ using~\eqref{eq:surf_grad}, to construct the planewave interpolants $\Phi(\cdot,\tilde p_\ell),$  $\ell=1,2,3$}
\FOR{$\kappa$ from 1 to $M$}
\STATE{Compute $\bold n^K$ and the area $|K|$ of the $\kappa$-th mesh triangle $K$ with vertex indices $\{\kappa_1,\kappa_2,\kappa_3\}\subset\{1,\ldots,N\}$}
\FOR{$m$ from 1 to $3$}
\STATE{Evaluate $F_\ell = f(\tilde p_\ell,p_{\kappa_m})$, $\ell=1,2,3,$  where
$f(p,q) =-\frac{\varphi_h(p)}{2}+ \frac{\p G( p,q)}{\p \bnor(q)}\{\varphi_h(q)-\Phi(q,p)\}-G( p,q)\{\im\!\eta\varphi_h(q)-\Phi_n(q, p)\}$.  The normal derivatives $\frac{\p G( p,q)}{\p \bnor(q)}$ and $\Phi_n(q,p)=\frac{\p \Phi(q,p)}{\p \bnor(q)}$ are computed with respect to the unit normal $\bnor^K$}
\STATE{Update $I_{\tau_1}= I_{\tau_1} + \frac{|T||K|}{9}\lf\{v_{\tau_1}(\tilde p_{1})F_1+v_{\tau_1}(\tilde p_{2})F_2+v_{\tau_1}(\tilde p_{3})F_3\rg\}$} 
\STATE{Update $I_{\tau_2}=I_{\tau_2} + \frac{|T||K|}{9}\lf\{v_{\tau_2}(\tilde p_{1})F_1+v_{\tau_2}(\tilde p_{2})F_2+v_{\tau_2}(\tilde p_{3})F_3\rg\}$} 
\STATE{Update $I_{\tau_2}= I_{\tau_3} + \frac{|T||K|}{9}\lf\{v_{\tau_3}(\tilde p_{1})F_1+v_{\tau_3}(\tilde p_{2})F_2+v_{\tau_3}(\tilde p_{3})F_3\rg\}$}
\ENDFOR
\ENDFOR
\ENDFOR
\RETURN $I_{j}$ for $j=1,\ldots,N$
\end{algorithmic}\label{alg:BEM_BW}
\end{algorithm}

In order to tackle the BM integral equation~\eqref{BM}, in turn, we apply the closed-form density interpolation technique to both single- and double-layer operators separately. In detail, we first resort to identities~\eqref{eq:Maue} and~\eqref{eq:move_der} to express the hypersingular operator in terms of single-layer operators. Then, upon integration by parts, the discrete variational formulation for the BM integral equation reads as: Find $\varphi_h\in W_h$ such that 
\begin{equation}
\frac{\im\!\eta}{2}\lf( v_j,\varphi_h\rg)-\im\!\eta\lf( v_j,K'\varphi_h\rg)-\lf( \vv{\curl}_\Gamma v_j,S\,\vv{\curl}_\Gamma\varphi_h\rg)
+k^2\lf( v_j\bnor,S\bnor\varphi_h\rg)=-\lf( v_j,\frac{\p u^\inc}{\p \bnor}\rg)\label{eq:VF_BM}
\end{equation}
for all basis functions $v_j\in W_h$, $j=1,\ldots,N$, where $(\cdot,\cdot)$ denotes the standard real pairing $(\varphi,\psi) =\int_{\Gamma}\varphi(q)\cdot \psi(q) \de s(q)$.  Noting that $( v_j,K'\varphi_h)=( Kv_j,\varphi_h)$ we hence conclude that it suffices to apply the proposed technique to both $S$ and $K$ separately (see Remark~\ref{rem:eval_sep}).

Finally, in order to produce accurate evaluations of  the combined field potential~\eqref{eq:layer_pot} at target points $\ner\in\R^3\setminus\Gamma$ near the surface $\Gamma$, we resort once again to the interior quadrature points~\eqref{eq:QP}. Indeed, in the context of the BEM the combined field potential at a point $\ner\in\R^3\setminus\Gamma$ can be expressed as
\begin{equation}\begin{split}
u_D^s(\ner) \approx-\boldsymbol 1_{\Omega}(\ner)\Phi(\ner,p^*) +\sum_{T\in\mathcal T_h} \int_{T}\frac{\partial G(\ner,q)}{\partial\bnor(q)}\lf\{\varphi_h(q)-\Phi(q,p^*)\rg\}\de s(q)-\hspace{0.7cm}\\
\sum_{T\in\mathcal T_h}\int_T G(\ner,q)\lf\{{\rm i}\eta\varphi_h(q)-\Phi_n(q,p^*)\rg\}\de s(q)\mbox{ with } p^*={\rm arg}\min_{q\in\Gamma}|\ner-q|,
\end{split}
\end{equation}
where the integrals over $T$ are approximated as follows
\begin{equation*}\begin{split}
 \int_{T}\frac{\partial G(\ner,q)}{\partial\bnor(q)}\lf\{\varphi_h(q)-\Phi(q,p^*)\rg\}\de s(q) \approx&  \frac{|T|}{3}\sum_{\ell=1}^3\frac{\partial G(\ner,\tilde p_\ell)}{\partial\bnor(\tilde p_\ell)}\lf\{\varphi_h(\tilde p_\ell)-u(\tilde p_\ell,p^*)\rg\},\\
\int_T G(\ner,q)\lf\{{\rm i}\eta\varphi_h(q)-\Phi_n(q,p^*)\rg\}\de s(q) \approx& \frac{|T|}{3}\sum_{\ell=1}^3 G(\ner,\tilde p_\ell)\lf\{{\rm i}\eta\varphi_h(\tilde p_\ell)-\Phi_n(\tilde p_\ell,p^*)\rg\}.
 \end{split}
\end{equation*}

 \section{Numerical examples}\label{eq:numerical_results}
This section presents a variety of numerical experiments that illustrate different aspects of the proposed methodology. 

 \subsection{Validation of the density interpolation procedures}
Our first numerical example is devoted to the validation of the two density interpolation procedures introduced above in Section~\ref{sec:interpolating_functions}. We start of by taking  $\Gamma$ as the (smooth) boundary of the bean-shaped obstacle displayed in Figure~\ref{fig:exp1a}, on which we define the density function
\begin{equation}
\rho(q) := \varphi(q)-\Phi(q,p^*),\quad q,p^*\in\Gamma, \label{eq:error_dens}
\end{equation}
where $\varphi$ is a given smooth density and  $\Phi$ is the planewave interpolant at $p^*\in\Gamma$.  Note that, by construction, $\rho$ and its  first $M$ tangential derivatives vanish at  $p^*\in\Gamma$. As was discussed in Section~\ref{sec:Nystrom}, the surface $\Gamma$ is here represented  by means of six non-overlapping rectangular patches each of which is discretized using Chebyshev grids consisting of $50\times 50$ points. Figure~\ref{fig:exp1a}  (resp. \ref{fig:exp2a}) displays the real part of  $\rho$ produced by the closed-form (resp. algebraic)  procedure. Figures~\ref{fig:exp1b} and~\ref{fig:exp1c} (resp.~\ref{fig:exp2b} and~\ref{fig:exp2c}), in turn, display slices of the real part of $\p^\alpha\rho$, $|\alpha|=M$, at $p^*$ in the parameter space, obtained using the closed-form (resp. algebraic) procedure with $M=1$ (resp. $M=3$).   The density function $\varphi$ utilized here is selected as the Dirichlet trace of the field produced by a point source at the point $\ner_0 = (0.1, -0.1,0.25)$ placed inside~$\Gamma$. The wavenumber and the coupling parameter considered in this example are $k=10$ and~$\eta =k$, respectively. These results demonstrate that the prescribed Taylor interpolation order $M$ is achieved by the proposed procedures. Similar results are obtained for the imaginary part of $ \rho$ as well as for $\rho_n(q) = {\rm i}\eta\varphi(q)-\Phi_n(q,p^*)$, which, for the sake brevity, are not displayed here.

 \begin{figure}[h!]
\centering	
\begin{subfigure}[b]{0.3\textwidth}{\centering\includegraphics[scale=0.76]{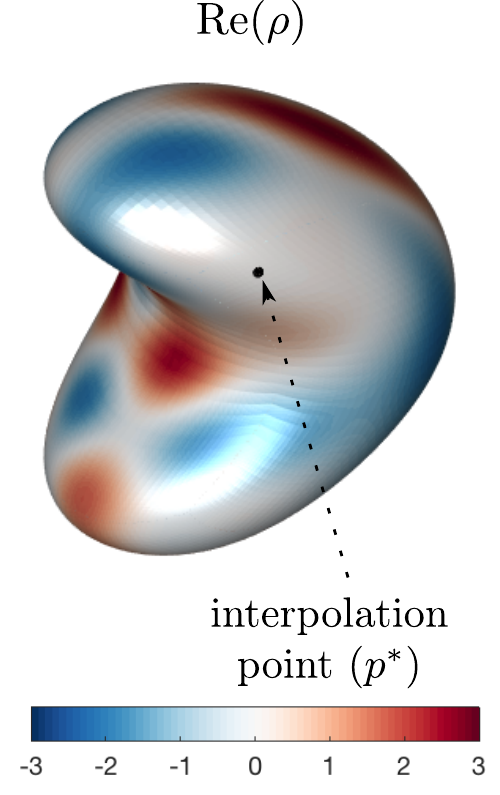}\caption{\ }\label{fig:exp1a}}\end{subfigure}
\begin{subfigure}[b]{0.3\textwidth}{\centering\includegraphics[scale=.6]{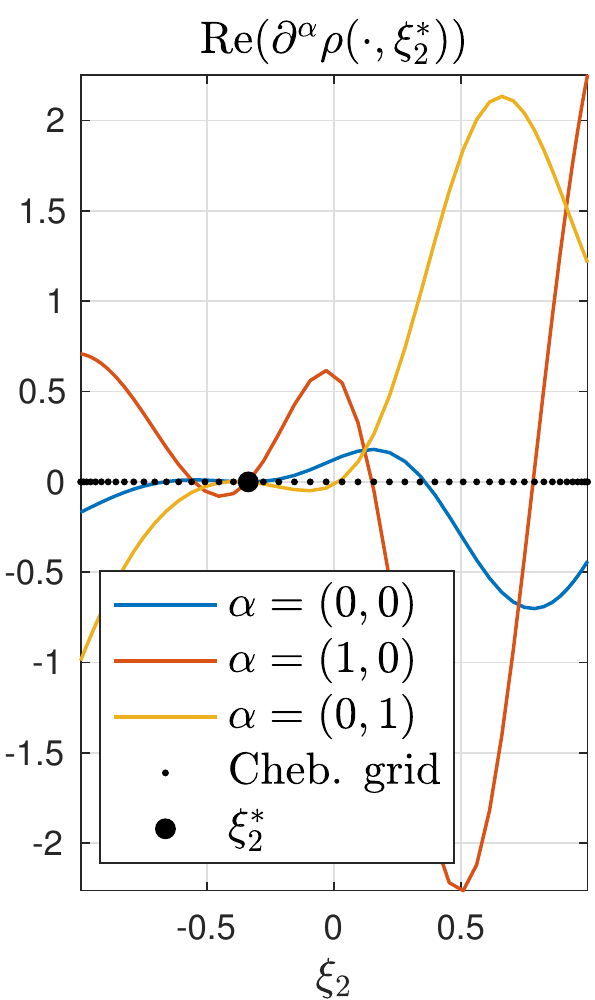}\caption{\ }\label{fig:exp1b}}\end{subfigure}
\begin{subfigure}[b]{0.3\textwidth}{\centering\includegraphics[scale=.6]{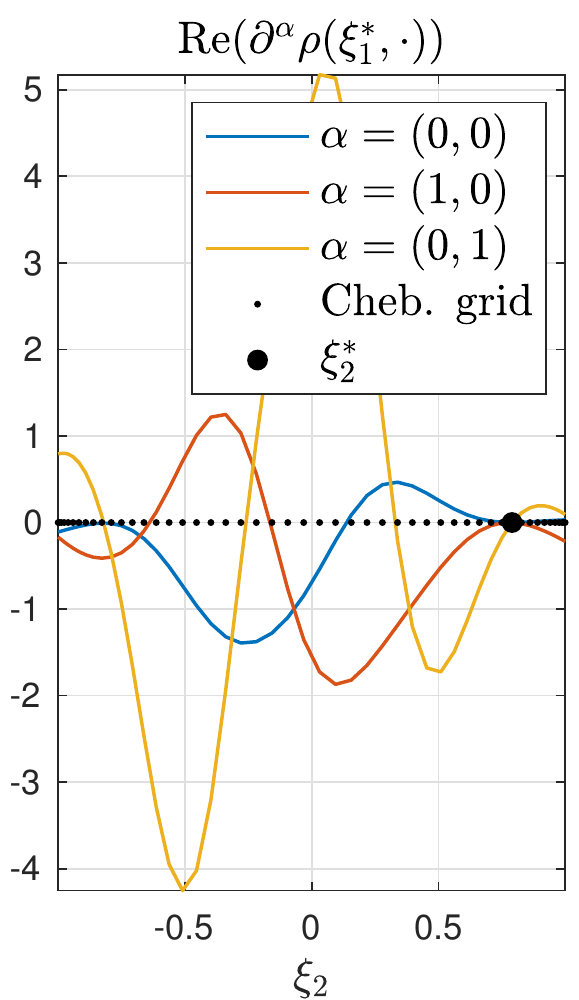}\caption{\ }\label{fig:exp1c}}\end{subfigure}\\
\begin{subfigure}[b]{0.3\textwidth}{\centering\includegraphics[scale=0.76]{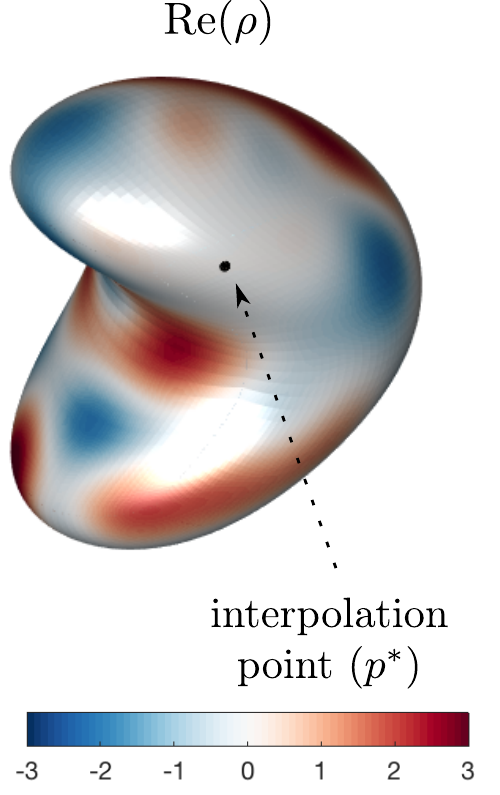}\caption{\ }\label{fig:exp2a}}\end{subfigure}
\begin{subfigure}[b]{0.3\textwidth}{\centering\includegraphics[scale=.6]{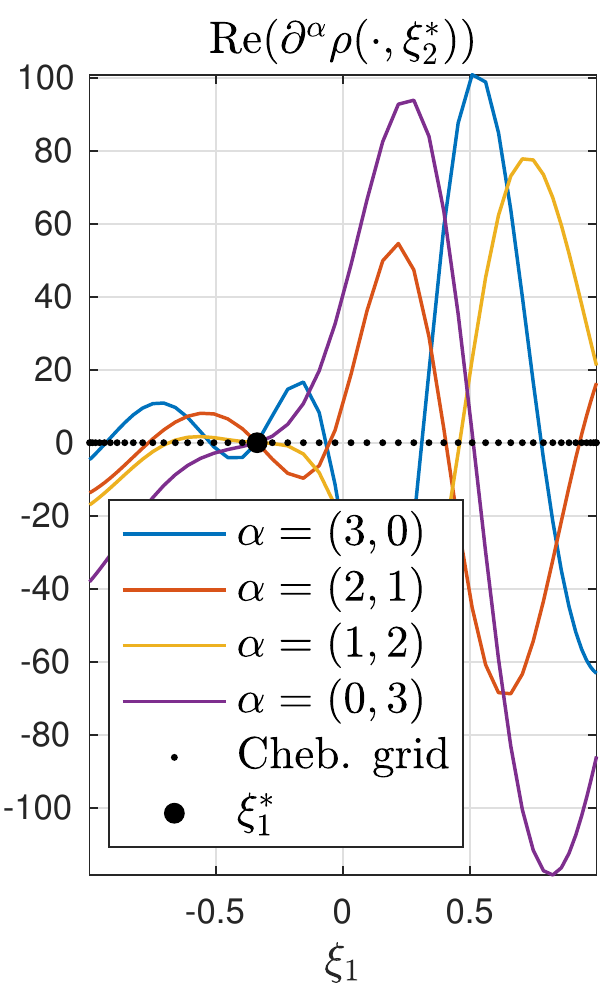}\caption{\ }\label{fig:exp2b}}\end{subfigure}
\begin{subfigure}[b]{0.3\textwidth}{\centering\includegraphics[scale=.6]{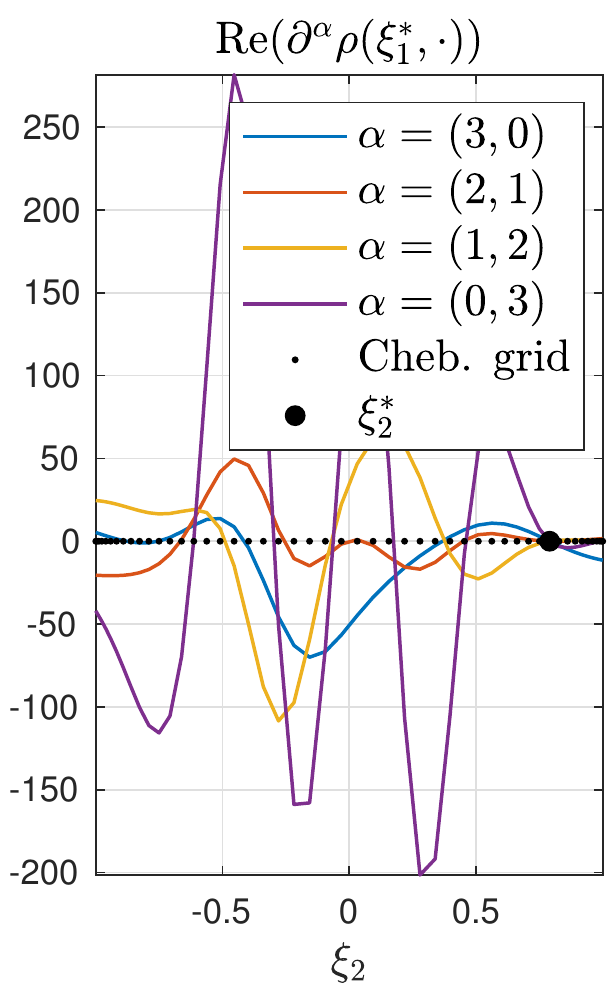}\caption{\ }\label{fig:exp2c}}\end{subfigure}
\caption{(a) (resp.~(d)): Plot of real and imaginary parts of $\rho$ defined in~\eqref{eq:error_dens} where the planewave interpolant $\Phi$ was constructed using the analytic (resp. numerical) procedure described in Section~\ref{Meq1} (resp.~\ref{sec:higher_order}). The interpolation point $p^*:=\bnex(\xi_1^*,\xi_2^*)=(-0.616, 0.310,0.599)$ is marked by a black dot. (b) and (c) (resp. (e) and~(f)): Plots of the cross section of the partial derivatives  $\p^{\alpha}\rho$ for all $|\alpha|= 1$ (resp. $|\alpha|=3$) in the parameter space. Note that all the first (resp. third) order derivatives vanish exactly at the interpolation points $(\xi_1^*,\xi_2^*)=(-0.339, 0.790)$.}\label{fig:reg_functions_analy}
\end{figure}
%
%

\subsection{Nystr\"om  and Boundary Element methods}

This section illustrates the capabilities of the density interpolation method for the regularization of the combined field potential and  associated BW and BM integral operators. 


 \begin{figure}[h!]
\centering	
\begin{subfigure}[b]{0.49\textwidth}{\centering\includegraphics[scale=.3]{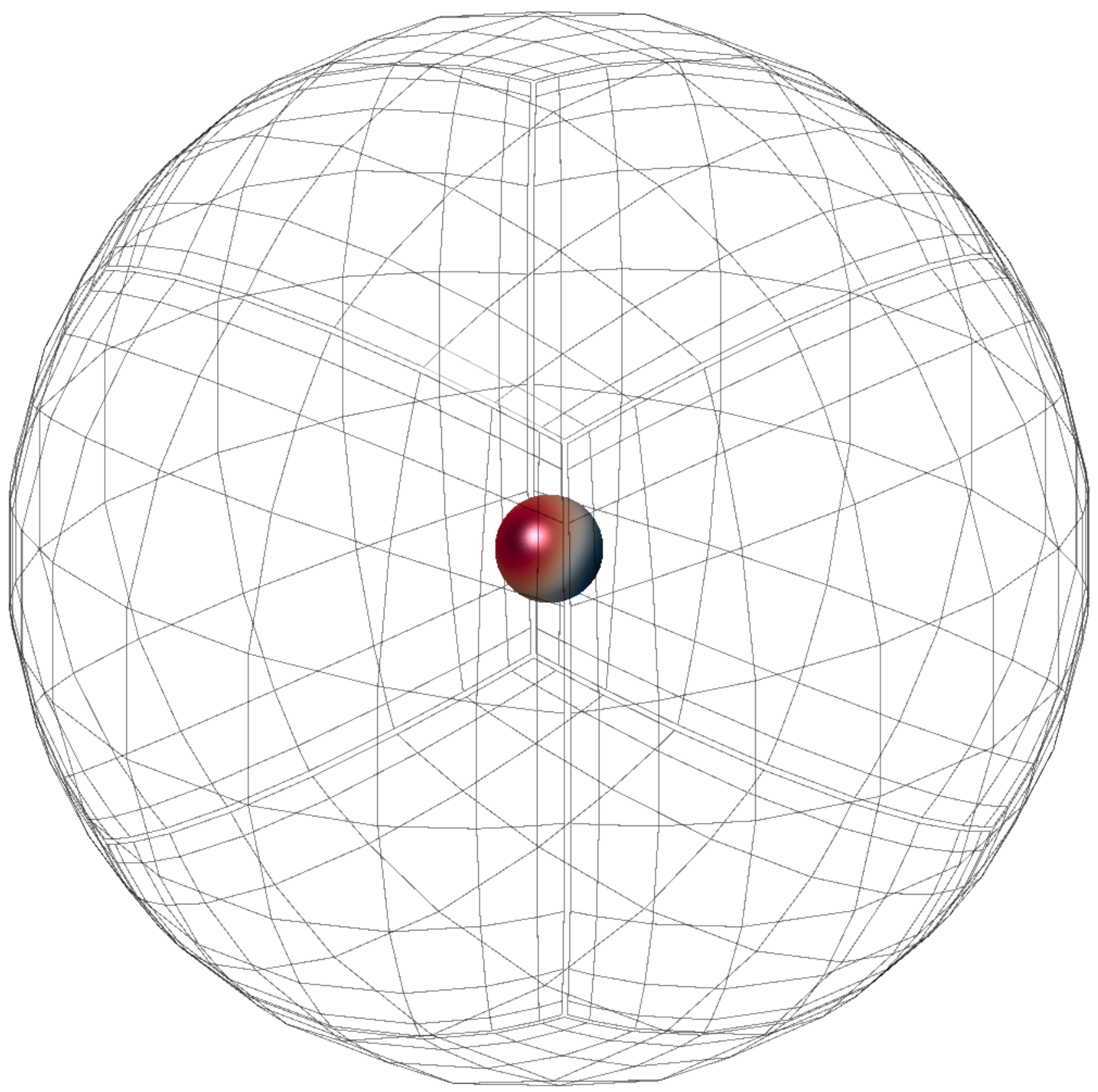}\caption{Far-field evaluation grid}\label{fig:exp3a}}\end{subfigure}
\begin{subfigure}[b]{0.49\textwidth}{\centering\includegraphics[scale=.22]{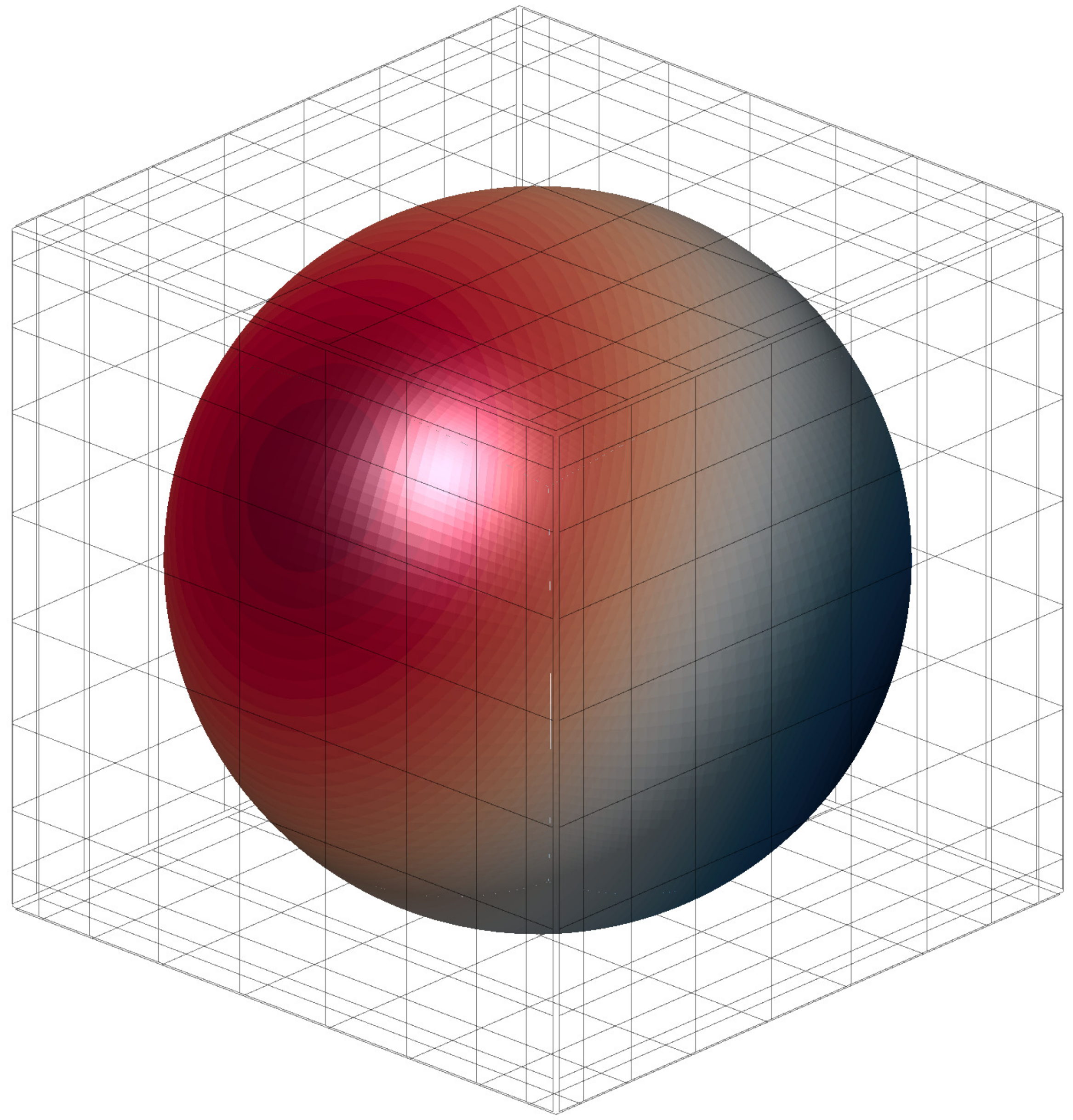}\caption{Near-field evaluation grid}\label{fig:exp3b}}\end{subfigure}
\caption{Grids utilized in the evaluation of the far- and near-field  errors.}\label{fig:FF_NF_grids}
\end{figure}

 \begin{figure}[h!]
\centering	
\begin{subfigure}[b]{0.49\textwidth}{\centering\includegraphics[scale=.55]{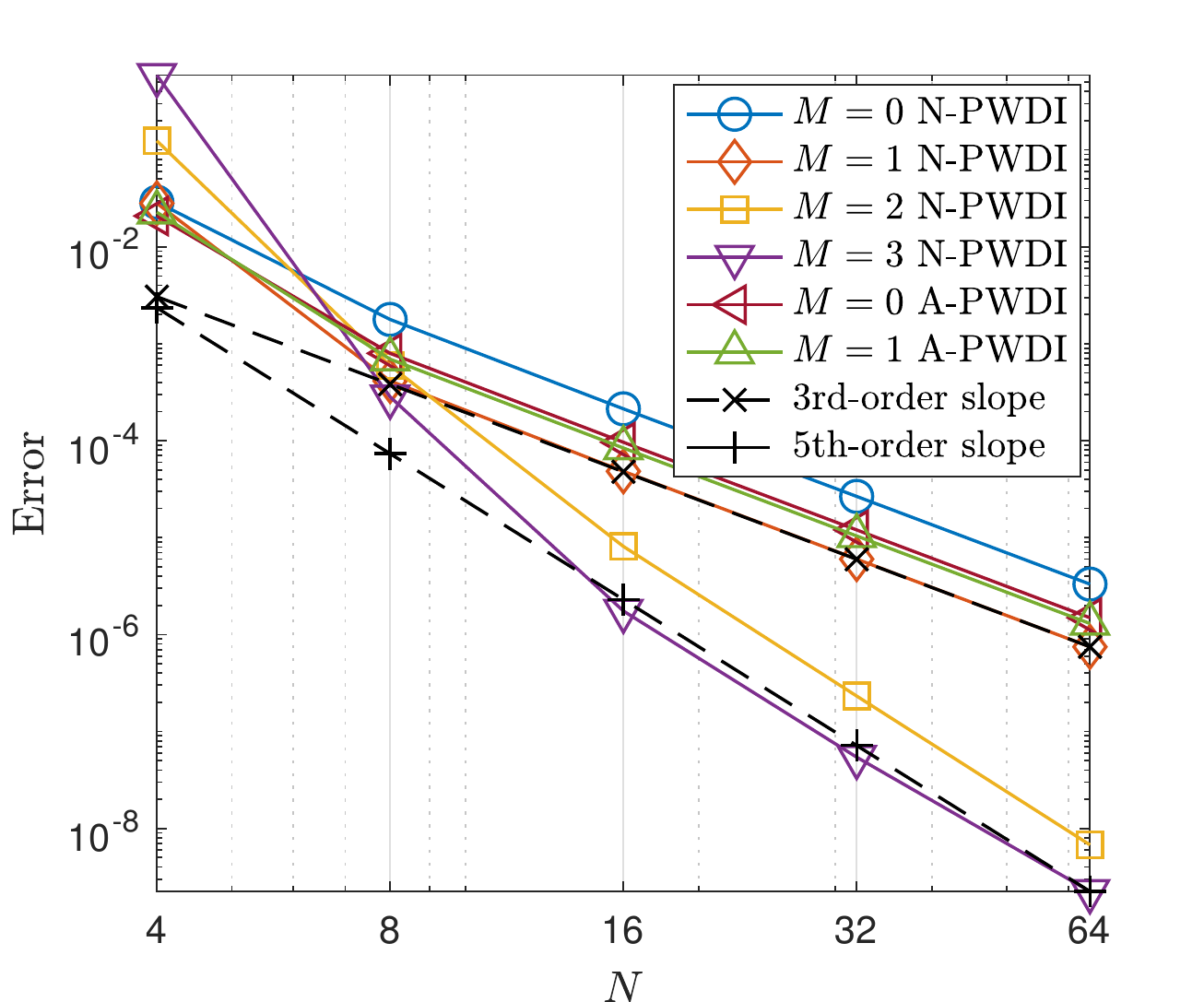}\caption{Far-field (relative) errors}\label{fig:exp3c}}\end{subfigure}
\begin{subfigure}[b]{0.49\textwidth}{\centering\includegraphics[scale=.55]{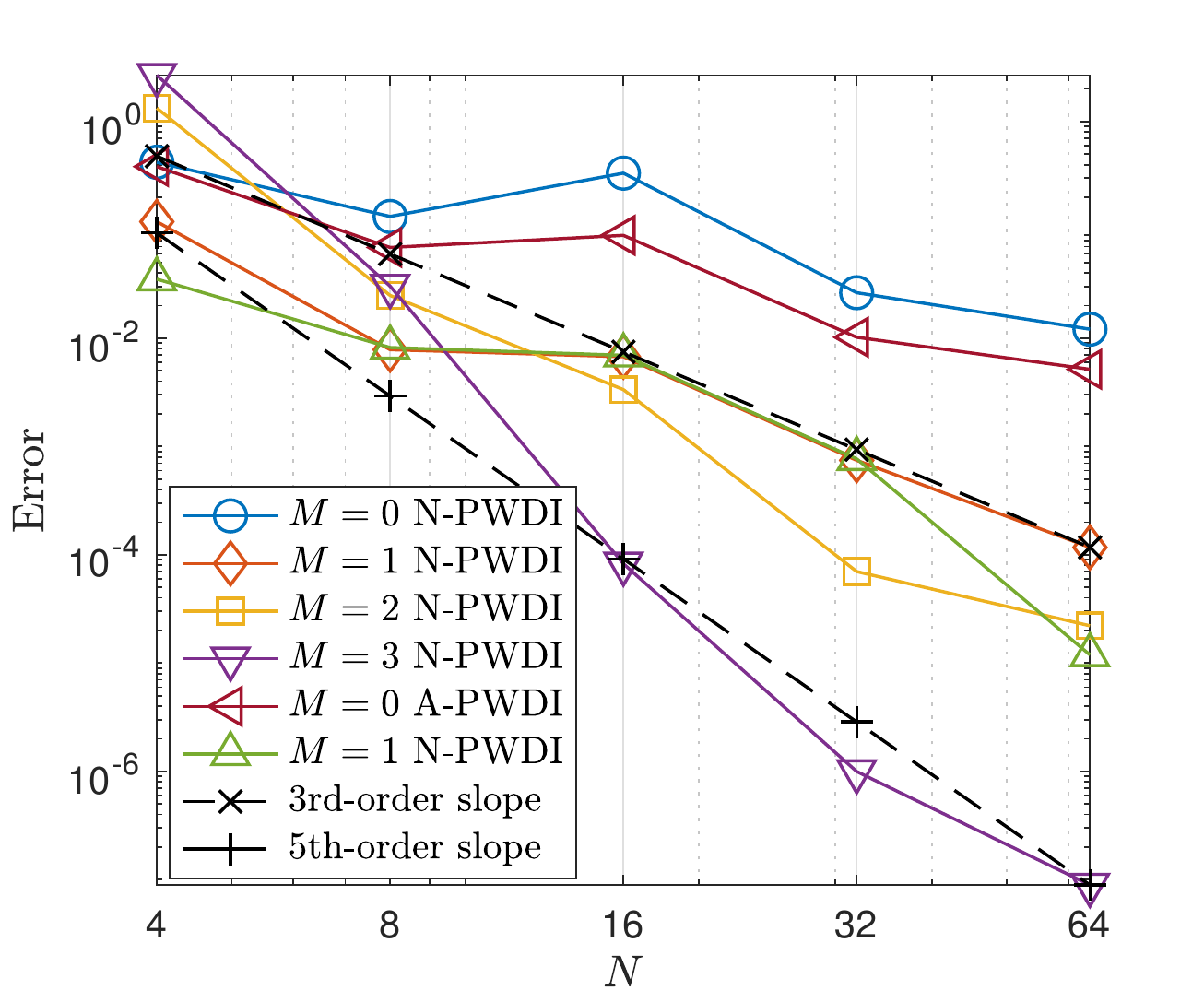}\caption{Near-field (relative) errors}\label{fig:exp3d}}\end{subfigure}
\caption{ Far- and near-field  errors in the solution of the Dirichlet problem~\eqref{eq:Dirichlet}  produced by the Nystr\"om method discretization of the BW integral equation~\eqref{BW} using the two proposed  PWDI techniques for different interpolation orders $M$ and grid sizes~$N$ (each quadrilateral surface patch is discretized using $N\times N$ quadrature points).}\label{fig:errors_sphere_BW}
\end{figure}

\subsection{Simple surfaces} In our first example we let $\Gamma$ be a unit sphere at the origin. (This simple surface has the advantage that can be easily represented using both quadrilateral patches and triangular meshes, allowing us to tackle the same problems using Nystr\"om and BEM methods.)  In order to assess the numerical errors, we consider an exact solution for both Dirichlet~\eqref{eq:Dirichlet} and Neumann~\eqref{eq:Neumann} problems, which is given by $u^s_{\rm exact}(\ner)= \e^{\im\! k|\ner-\ner_0|}/|\ner-\ner_0|-\e^{\im\! k|\ner-\ner_1|}/|\ner-\ner_1|$ where the source points $\ner_0 = (0.2, 0.1, 0.1)$ and $\ner_1=(-0.1, 0.3, -0.1)$ lie inside the unit sphere $\Gamma$. The real part of the Dirchlet trace of $u^s_{\rm exact}$ on $\Gamma$ (for $k=1$) is plotted  (in colors) in Figures~\ref{fig:exp3a} and~\ref{fig:exp3b}. The far-field errors in the  numerical solution $u^s$ are measured by
\begin{equation}\label{eq:error_formula}
{\rm Error} = \frac{\max_{\ner\in\Xi}|u^s_{\rm exact}(\ner)-u^s(\ner)|}{\max_{\ner\in\Xi}|u^s_{\rm exact}(\ner)|},
\end{equation} where $\Xi$ is the spherical grid (of radius $r=10$) displayed in Figure~\ref{fig:exp3a}.  The near-field errors, in turn, are  measured  using~\eqref{eq:error_formula} but with $\Xi$  being  the grid of the unit cube displayed in Figure~\ref{fig:exp3b}. Note that $\Gamma$ touches the cube surface grid at the center of its six faces.

The accuracy of the Chebyshev-based Nystr\"om method is assessed first. Figures~\ref{fig:exp3c} and~\ref{fig:exp3c}  display the far- and near-field errors, respectively, in the approximate Dirichlet solution obtained from the kernel-regularized BW integral equation~\eqref{eq:BW_hoss} for $k=\eta=1$. These figures display the errors obtained using the (closed-form) analytical  (A-PWDI) procedure as well as the (algebraic) numerical  (N-PWDI) density interpolation procedure introduced in Sections~\ref{Meq1} and~\ref{sec:higher_order}, respectively, for various discretization sizes~$N$ and density interpolation orders~$M$. The surface $\Gamma$ is here represented using six quadrilateral patches and each one of them is discretized using a Chebyshev grid of $N\times N$ points. 
The near fields, in particular, were computed using the kernel-regularized combined field potential~\eqref{eq:green_field_hoss}. As can be observed in these results,  the proposed technique yields third-order convergence of the far fields (as the grid size $N$ increases) for interpolation orders $M=0$~and~$1$ and fifth-order convergence for $M=2$~and~$3$.  In the near field, on the other hand,  third-order convergence is observed for all orders with the only exception of $M=3$ for which fifth order is achieved as $N$ increases. It is worth mentioning that the associated linear systems were solved iteratively by means of GMRES~\cite{saad1986gmres} with an error tolerance of~$10^{-8}$.  A nearly constant number of iterations ($\sim\!10$) was needed in all the examples considered in Figure~\ref{fig:errors_sphere_BW}. As in the case of the Laplace equation in 2D using the trapezoidal rule~\cite[Remark~6.1]{HDI3D}, the fact that the interpolation orders $M=0$~and~$1$, and also $M=2$~and~$3$, render the same order of convergence can be explained by the fact that Fej\'er's quadrature rule in this case integrates certain odd singular terms exactly. 

The  geometric setup of Figure~\ref{fig:FF_NF_grids} is next used to assess the accuracy of the  Chebyshev-based Nystr\"om method when dealing with the more challenging BM integral equation~\eqref{BM} for the solution of the Neumann problem~\eqref{eq:Neumann}.  The relevant numerical results are summarized in Table~\ref{tb:BM_results} where it can be clearly seen that, despite the overall smaller errors obtained for $M=3$, both interpolation orders $M=2$ and $M=3$ yield  far- and near-field errors that exhibit  the same nearly third-order convergence rate as $N$ increases. Direct~\eqref{eq:BM_hoss} and regularized~\eqref{eq:BM_hoss_reg} versions of the hyper-singular operator were considered in these examples. A number of GMRES iterations as large as 200 was needed to achieved the desired accuracy in some of the examples considered in Table~\ref{tb:BM_results} due to the known unfavorable spectral properties of the hypersingular operator present in the BM integral equation. As is well known, the number of GMRES iterations can be drastically reduced by considering appropriate preconditioners for the hypersingular operator~(cf.~\cite{boubendir2013wave,Bruno:2012dx}).

 \begin{table}
   \begin{center}
     \scalebox{0.95}{\begin{tabular}{c|c|c|c|c}
 \multicolumn{5}{c} {Nystr\"om method --- BM integral equation} \\     
\toprule
&  \multicolumn{2}{c|}{$M=2$}& \multicolumn{2}{c}{$M=3$} \\
\cline{2-5	}
$N$& \footnotesize{Direct}& \footnotesize{Regularized}& \footnotesize{Direct}& \footnotesize{Regularized}   \\
\cline{2-5}
 &\multicolumn{4}{c} {Far-field} \\
\hline
4& $8.55\cdot 10^{-2}$&$2.01\cdot 10^{-1}$&$8.89\cdot 10^{-2}$&$6.66\cdot 10^{-2}$ \\
8&$1.73\cdot 10^{-3}$ &$8.44\cdot 10^{-3}$&$3.55\cdot 10^{-4}$&$2.03\cdot 10^{-3}$\\
16& $3.46\cdot 10^{-4}$ &$9.46\cdot 10^{-4}$&$5.31\cdot 10^{-5}$&$2.60\cdot 10^{-4}$ \\
32&$4.13\cdot 10^{-5}$ &$1.21\cdot 10^{-4}$&$7.22\cdot 10^{-6}$&$3.22\cdot 10^{-5}$\\
 \hline
& \multicolumn{4}{c} {Near-field} \\
\hline
 4 &$1.61\cdot 10^{-0}$&$9.37\cdot 10^{-1} $&$1.19\cdot 10^{-0}$&$2.48\cdot 10^{-0}$\\
8&$7.32\cdot 10^{-3}$&$1.63\cdot 10^{-2}$&$3.31\cdot 10^{-3}$ &$6.72\cdot 10^{-3}$\\
16&  $1.86\cdot 10^{-3}$&$1.53\cdot 10^{-3}$&$1.79\cdot 10^{-4}$ &$8.35\cdot 10^{-4}$\\
32&$2.27\cdot 10^{-5}$ & $2.02\cdot 10^{-4}$&$2.08\cdot 10^{-5}$ &$8.29\cdot 10^{-5}$\\
 \bottomrule
\end{tabular}}
\caption{\label{tb:convergence_near_far}  Far- and near-field  errors in the solution of the Neumann problem~\eqref{eq:Neumann}  produced by the Nystr\"om  discretization of the BM integral equation~\eqref{BM} using the numerical PWDI procedure of Section~\ref{sec:higher_order} for $M=2,3$ and various grid sizes~$N$.}\label{tb:BM_results}
\end{center}
 \end{table}

We next consider once again the Dirichet and Neumann problems posed in the exterior of the unit sphere, but now utilizing BEM discretizations of the associated BW~\eqref{BW} and BM~\eqref{BM} integral equations. The numerical results are summarized in Table~\ref{tb:BEM_results}. The close-form analytical density interpolation procedure of Section~\ref{sec:BEM} is used in all the examples included in this table. The discrete variational formulations corresponding to the BW and BM integral equations are given in~\eqref{eq:DVF} and~\eqref{eq:VF_BM}, respectively. We recall that the latter is here discretized as indicated in Section~\ref{sec:BEM}---by expressing it in terms of kernel-regularized single- and double-layer operators. As expected, these results demonstrate  that far-field errors exhibit second-order convergence rates for both boundary integral equations and interpolation order $M=0$~and~1, as the mesh size $h=\max_{T\in\mathcal T_h, i,j=1,2,3}|p_{\tau_i}-p_{\tau_j}|$ decreases. In fact, the errors obtained using the interpolation orders $M=0$~and~$1$ are almost identical. The closeness of the errors observed might be explained by the possible dominance of the Galerkin-BEM $\mathcal O(h^2)$ errors over the errors introduced by the numerical integration procedure.  The near-field errors, on the other hand, exhibit nearly second-order convergence rates for both orders $M=0$~and~$1$, with significantly smaller errors obtained for $M=1$. 

In order to demonstrate the accuracy of the Nystr\"om method when dealing with more complex geometries, we consider  the scattering of a planewave $u^\inc(\ner) = \e^{\im\! k\ner\cdot \bol d}$, in the direction $\bol d=(\cos\frac\pi3,-\sin\frac\pi3,0)$, that impinges on the three sound-soft obstacles shown in Figure~\ref{fig:other}. The resulting scattered field is solution of the exterior Dirichlet  problem~\eqref{eq:Dirichlet} that is here solved by means of the Nystr\"om method applied to the kernel-regularized BW integral equation~\eqref{eq:BW_hoss}. The far-field errors reported in Figure~\ref{fig:other}  were produced by~\eqref{eq:error_formula} with the set $\Xi$ being the spherical grid displayed in Figure~\ref{fig:exp3a}. The reference solution $u^s_{\rm exact}$ in~\eqref{eq:error_formula} was generated using a fine discretization of the BW integral equation consisting of Chebyshev grids comprising $36\times 36$ points per surface patch. Both (algebraic) numerical (with $M=2$) and (closed-form) analytical (with $M=1$) density interpolation procedures are utilized in this example.  Third- and fifth-order convergence rates of the far-field errors are observed for $M=1$~and~$2$, respectively, for the smooth surface cases (bean and ellipsoid). Only second-order convergence is achieved in the non-smooth surface case (cube) for both interpolation orders $M=1$~and~$2$. The poor convergence rate observed in the latter case is explained by the singular behavior of the integral equation solution $\varphi$ along the edges of the cube~(cf.~\cite{Costabel:2000bd}). In fact, for the interior point source problem described above in this section---in which case $\varphi$ is smooth up to the edges of the cube---third- and fifth-order convergence rates are attained. The total field solution of a higher-frequency scattering problem for the bean-shaped obstacle---whose diameter is $10\lambda$ ($k=2\pi/\lambda =10\pi$)---is shown in Figure~\ref{fig:HF}. The near fields displayed in Figure~\ref{fig:HF} are accurate to at least four decimal places, everywhere including near and on the surface of the bean obstacle.

To finalize this section, we present examples aiming at demonstrating the capability of the BEM solver of handling complex geometries of engineering relevance. To this end, we consider a triangular mesh representation of a Falcon airplane produced by Gmsh~\cite{geuzaine2009gmsh}, which is used in the solution of two Dirichlet problems~\eqref{eq:Dirichlet} with different incident fields. In the first example we validate our BEM solver for this challenging geometry by considering an incident field given by two point sources placed inside the airplane's fuselage. The numerical solution is then compared with the exact solution. For the wavenumber $k=0.5\pi$ and the mesh size $h=1.62$ we obtain a relative error~\eqref{eq:error_formula}  of $8.1\cdot 10^{-2}$ at a sphere containing the airplane. The near-field error at two different planes intersecting the airplane is displayed on the first row of Figure~\ref{fig:falcon}. Finally, our second example considers a planewave incident field in the direction $\bol d = (\cos\frac\pi4,-\sin\frac\pi4,0)$ for the same wavenumber ($k=0.5\pi$).  Two views of the real part of total field are displayed in the second row of Figure~\ref{fig:falcon} where it can be clearly seen the acoustic shadow under the airplane.
  \begin{table}
   \begin{center}
     \scalebox{0.95}{\begin{tabular}{c|c|c|c|c|c}
\multicolumn{6}{c} {Boundary element method } \\     
\toprule
&& \multicolumn{2}{c|}{Far-field}&\multicolumn{2}{c}{Near-field} \\\cline{3-6}
$h$&DoF &$M=0$&$M=1$ &$M=0$&$M=1$\\\cline{3-6}
&&\multicolumn{4}{c} {BW integral equation} \\ \hline     
$5.34\cdot 10^{-1}$& 114 &$4.48\cdot 10^{-2}$&$4.50\cdot 10^{-2}$ &$7.11\cdot 10^{-2}$&$5.98\cdot 10^{-2}$\\
$2.72\cdot 10^{-1}$&400 &$1.29\cdot 10^{-2}$&$1.32\cdot 10^{-2}$&$2.70\cdot 10^{-2}$&$1.65\cdot 10^{-2}$\\
$1.36\cdot 10^{-1}$&1507 &$3.54\cdot 10^{-3}$&$3.54\cdot 10^{-3}$&$2.76\cdot 10^{-2}$&$4.14\cdot 10^{-3}$\\
$6.87\cdot 10^{-2}$&6009 &$8.98\cdot 10^{-4}$&$8.96\cdot 10^{-4}$&$4.81\cdot 10^{-3}$&$1.05\cdot 10^{-3}$\\
\hline
&&\multicolumn{4}{c} {BM integral equation} \\ \hline     
$5.34\cdot 10^{-1}$& 114 &$ 5.55\cdot 10^{-2}$&$5.38\cdot 10^{-2}$ &$8.80\cdot 10^{-2}$&$8.29\cdot 10^{-2}$\\
$2.72\cdot 10^{-1}$&400 &$1.65\cdot 10^{-2}$&$1.61\cdot 10^{-2}$&$3.02\cdot 10^{-2}$&$1.94\cdot 10^{-2}$\\
$1.36\cdot 10^{-1}$&1507 &$3.97\cdot 10^{-3}$&$3.95\cdot 10^{-3}$&$3.04\cdot 10^{-2}$&$5.49\cdot 10^{-3}$\\
$6.87\cdot 10^{-2}$&6009 &$1.02\cdot 10^{-3}$&$1.01\cdot 10^{-3}$&$4.71\cdot 10^{-3}$&$1.38\cdot 10^{-3}$\\
 \bottomrule
\end{tabular}}
\caption{\label{tb:convergence_near_far_BEM}  Far- and near-field relative errors in the solution of the Dirichlet problem~\eqref{eq:Dirichlet}  produced by the boundary element discretization of the BW integral equation~\eqref{BW} using the analytical PWDI technique for different interpolation orders $M$ and mesh sizes~$h$. The surface $\Gamma$ considered in this example is a sphere of unit radius and centered at the origin.}\label{tb:BEM_results}
\end{center}
 \end{table}

 \begin{figure}[h!]
\centering	
\includegraphics[scale=0.65]{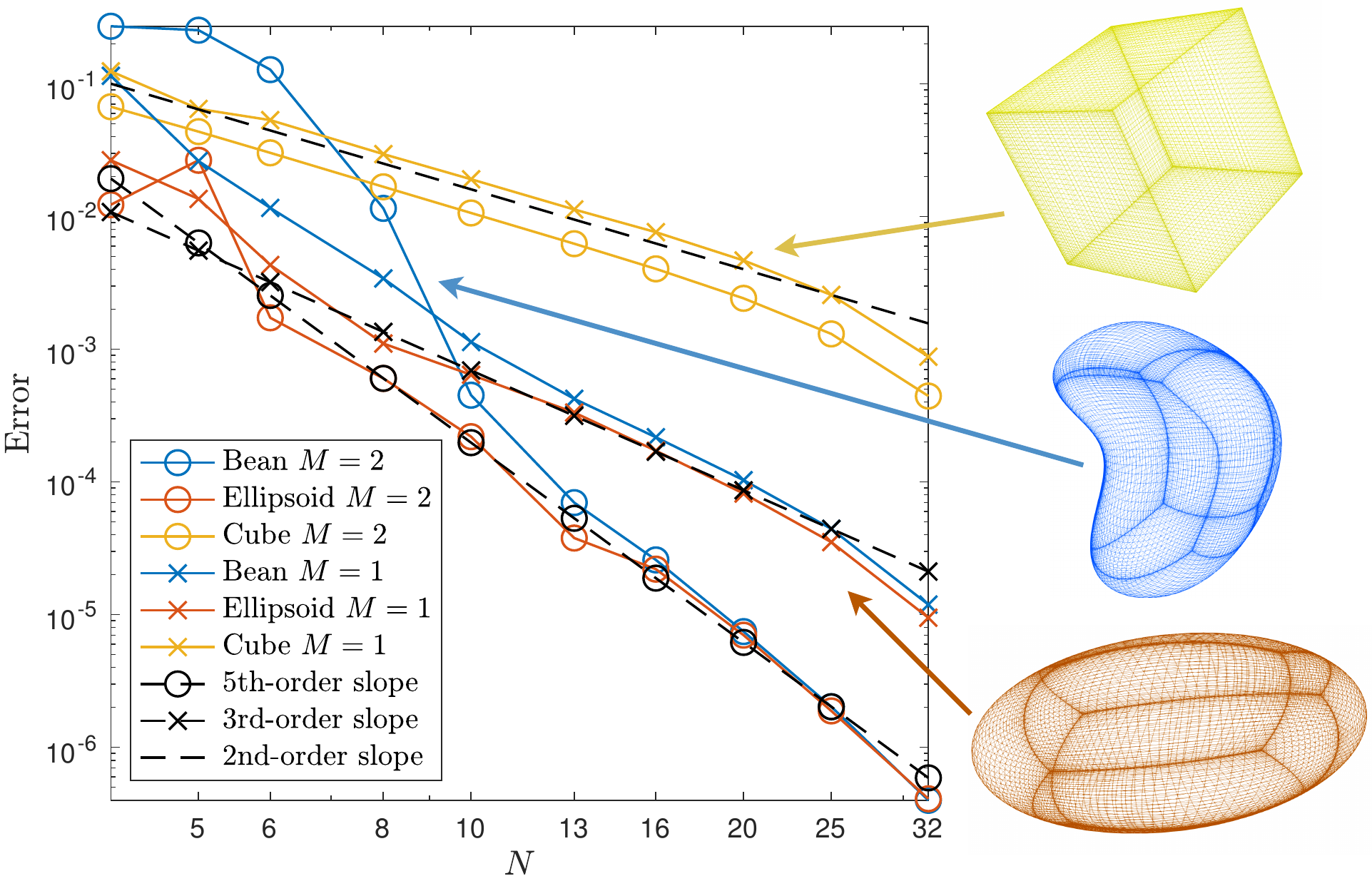}
\caption{Far-field errors in the solution of the Dirichlet problem~\eqref{eq:Dirichlet} corresponding to the scattering of a plane-wave off three different surfaces using the Chebyshev-based Nystr\"om method of Section~\ref{sec:Nystrom} applied to the BW integral equation~\eqref{BW} with  $k=\eta=1$. Both analytical (with $M=1$) and numerical  (with $M=2$) PWDI procedures were used in this example.}\label{fig:other}
\end{figure}

 \begin{figure}[h!]
\centering	
\includegraphics[scale=1]{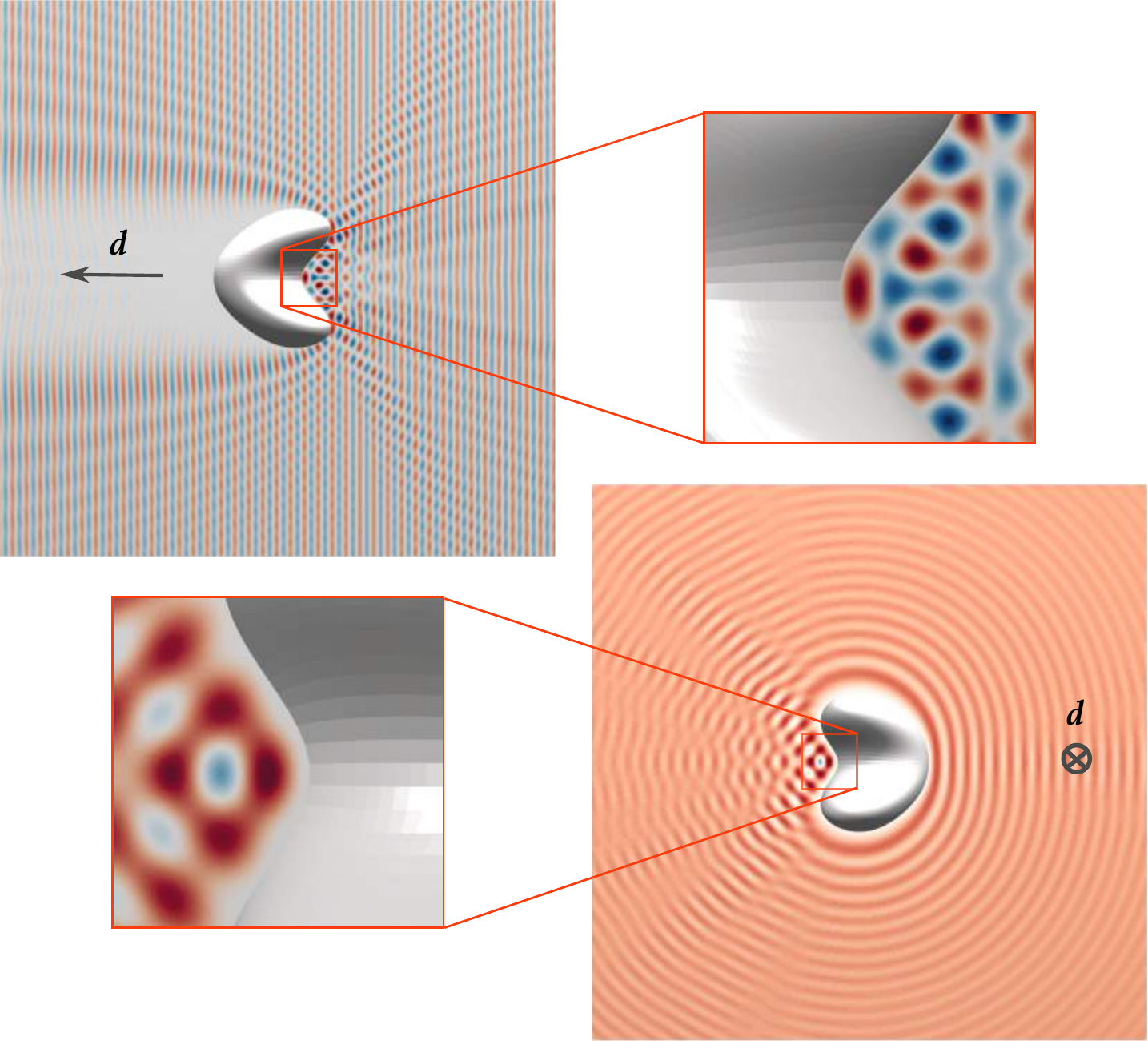}
\caption{Real part of the total field $u=u^s_D+u^\inc$ where $u^s_D$ is solution of the exterior Dirichlet problem~\eqref{eq:Dirichlet} corresponding to the scattering of the planewave $u^\inc(\ner)=\e^{\im\!k\ner\cdot \bol d}$, with $k=10\pi$ and $\bol d=(\cos\pi/3,-\sin\pi/3,0)$, off of a bean-shaped obstacle. Top: total field at a plane parallel to $\bol d$ that passes through the center of the obstacle. Bottom: total field at a plane perpendicular $\bol d$ that passes through the center of the obstacle. The surface $\Gamma$ was discretized using six $48\times 48$ Chebyshev grids. The discretized BW integral equation (with $\eta=k$) was solved by means of GMRES, which required 55 iterations to attain the prescribed $10^{-7}$ error tolerance.}\label{fig:HF}
\end{figure}
\subsection{Composite surfaces} In this final section we apply the multiple-scattering approach put forth in Section~\ref{sec:composites}, to the solution of the Dirichlet problem~\eqref{eq:Dirichlet}, posed in the exterior of the composite domain $\Omega = \Omega_1\cup\Omega_2$ with boundary $\Gamma=\p(\Omega_1\cup\Omega_2)$, where $\Omega_1$ is a sphere or radius~$0.5$ and $\Omega_2$ is a hemisphere of radius~$1.5$. The incident field used in this example is a planewave $u^{\inc}(\ner) = \e^{\im k\bol d\cdot \ner}$ in the direction $\bol d = (\cos\frac\pi4,0,-\sin\frac\pi4)$ and $k=\eta=1$. The multiple-scattering BW integral equation~\eqref{eq:IE_sep}, posed on $\tilde\Gamma = \p\Omega_1\cup\p\Omega_2$, is discretized using the BEM detailed in Section~\ref{sec:BEM} with $M=1$. Figure~\ref{fig:BEM_final} presents the far-field errors for various mesh sizes~$h$. The error is defined here as in~\eqref{eq:error_formula} with $\Xi$ being the spherical grid shown in Figure~\ref{fig:exp3a} and the reference solution $u^s_{\rm exact}$ being produced using a fine mesh discretization, with $h=0.11$, of the surfaces $\Gamma_1$ and $\Gamma_2$.  Three different $\Omega$ configurations, shown in inset plots in Figure~\ref{fig:BEM_final}, are considered, including one (on the left-hand-side) in which the two obstacles are touching at a single point. Clearly, second-order convergence is achieved, as $h$ decreases, in all three configurations. The real part of the total field solution of the problem of scattering (for $k=\eta=5$) together with the absolute value of the error obtained using the multiple-scattering approach and the standard approach, are shown in Figure~\ref{fig:BEM_final_final}. The reference solution for the error estimation is produced using a fine-grid discretization of $\Gamma$ using $h=0.1$.

 \begin{figure}[h!]
\centering	
\includegraphics[scale=0.26]{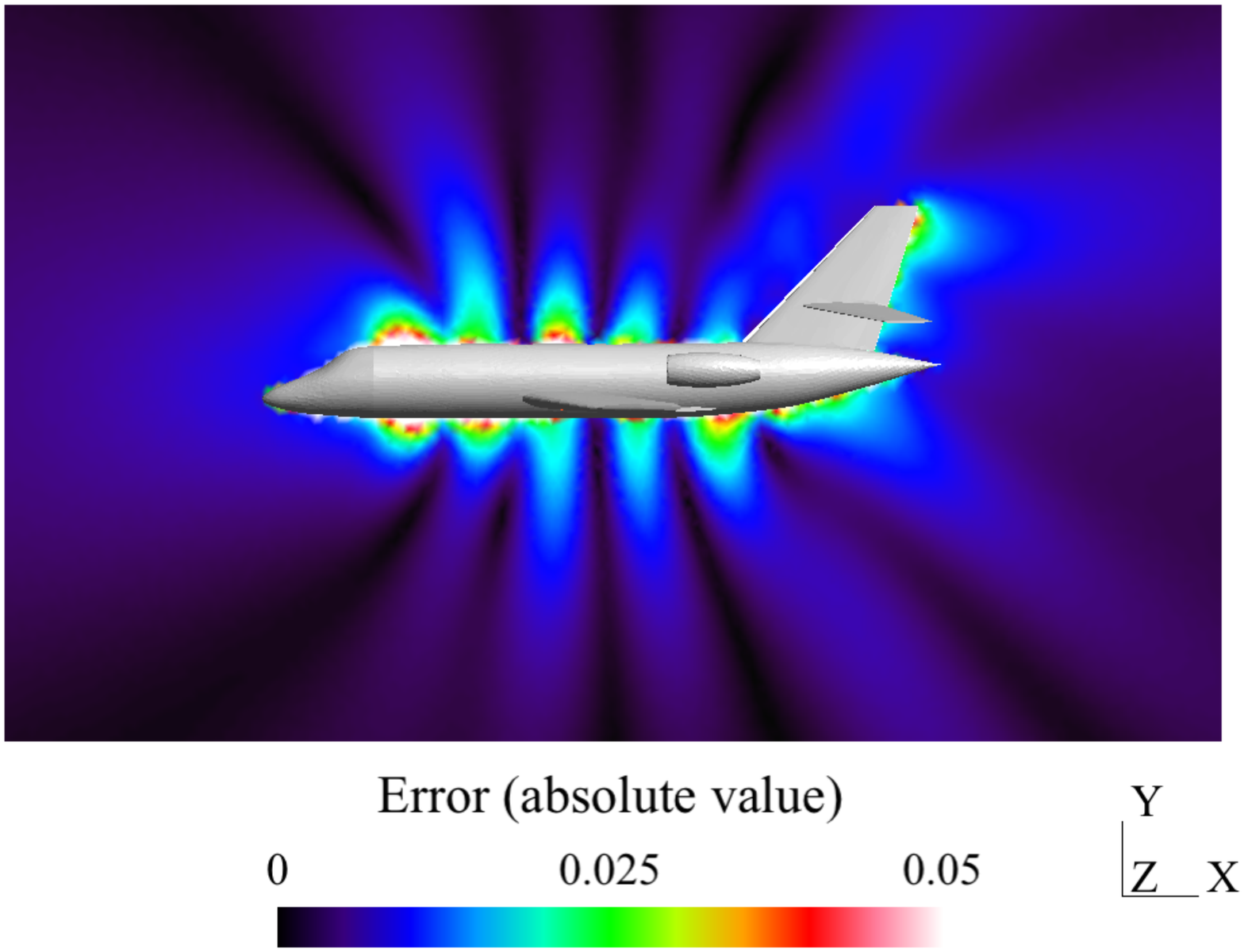}
\includegraphics[scale=0.26]{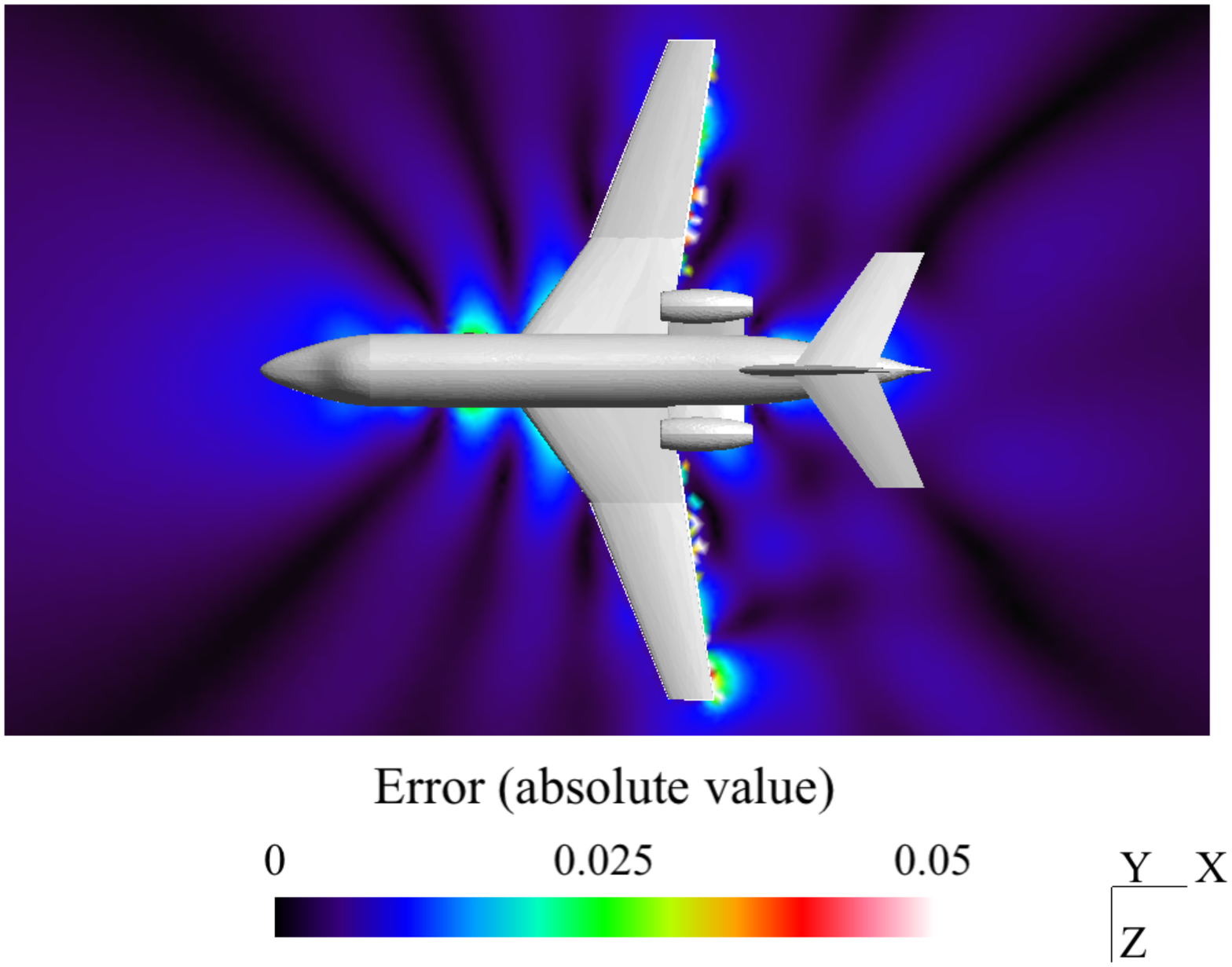}\\
\includegraphics[scale=0.26]{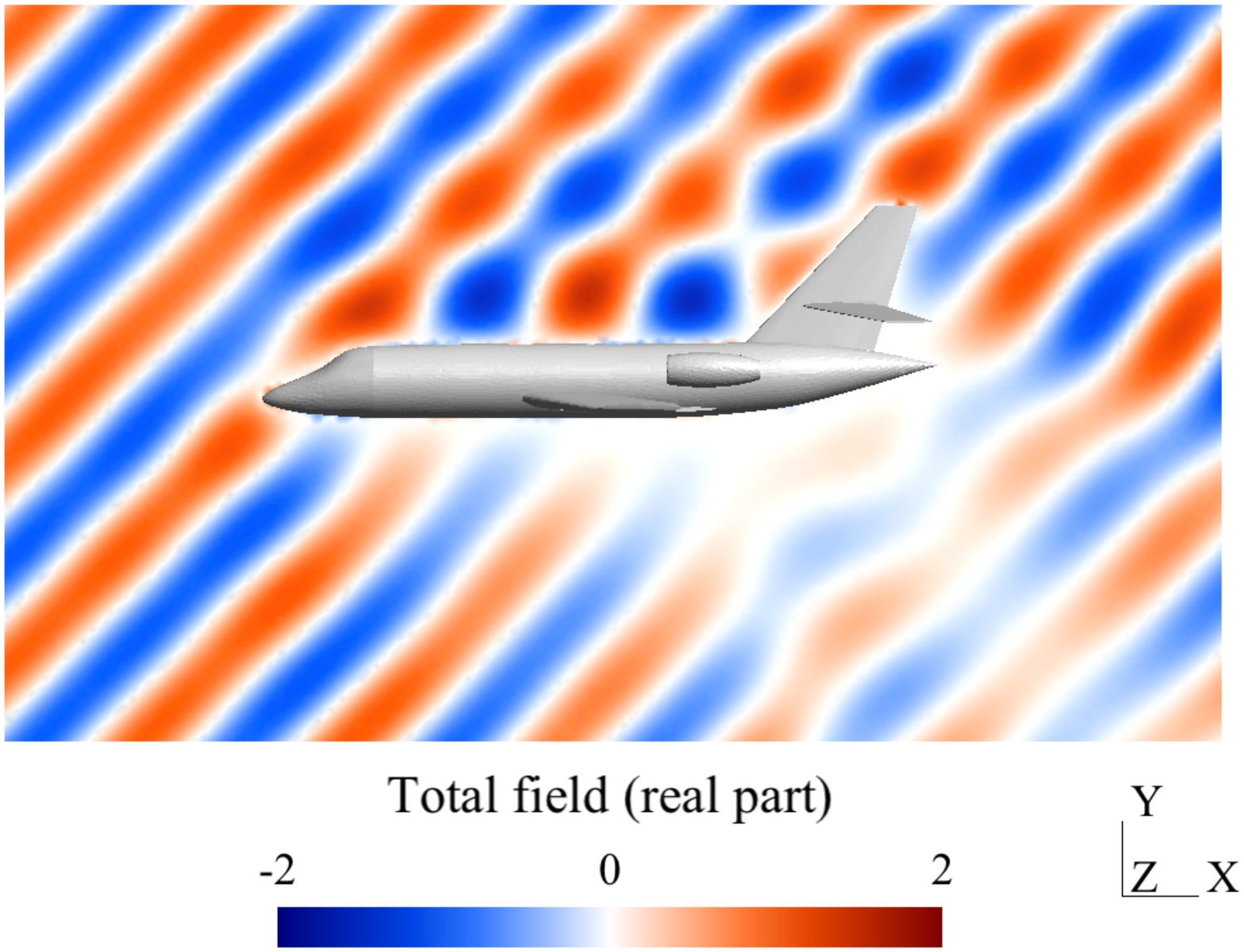}
\includegraphics[scale=0.26]{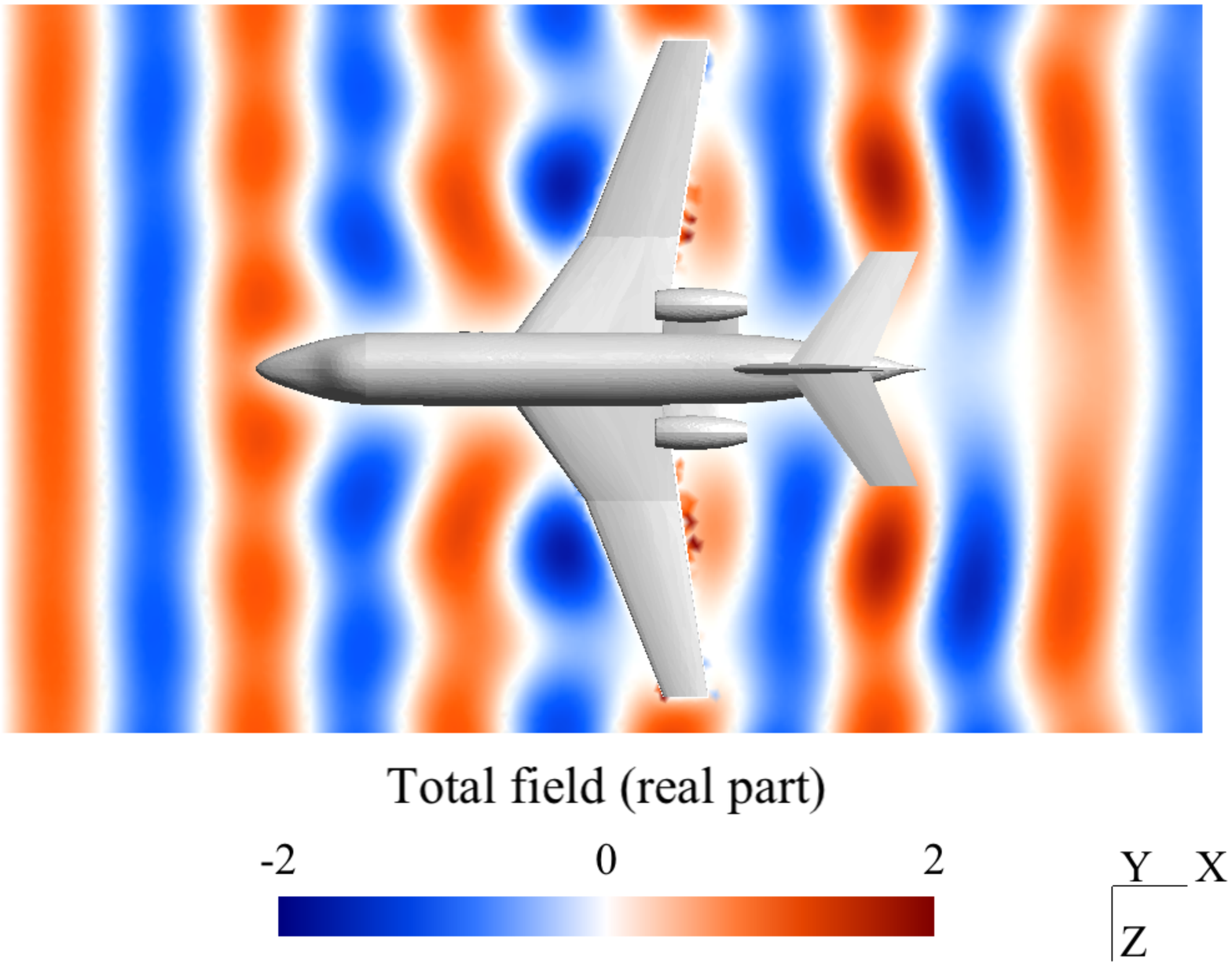}
\caption{First row: Two views of the absolute error in the BEM solution of~\eqref{eq:Dirichlet} with $k=0.5\pi$ for an incident field corresponding to two point sources placed inside the surface~$\Gamma$ which models a Falcon airplane. The relative error~\eqref{eq:error_formula} on a sphere containing the airplane is~$8.1\cdot10^{-2}$ in this example, where the mesh size is $h=1.62$. Second row: Two views of the real part of the total field solution of the problem of scattering~\eqref{eq:Dirichlet} for a planewave incident field in the direction $\bol d = (\cos\frac\pi4,-\sin\frac\pi4,0)$.  The analytical PWDI procedure with $M=1$ was used in all these examples.}\label{fig:falcon}
\end{figure}

 \begin{figure}[h!]
\centering	
\includegraphics[scale=0.7]{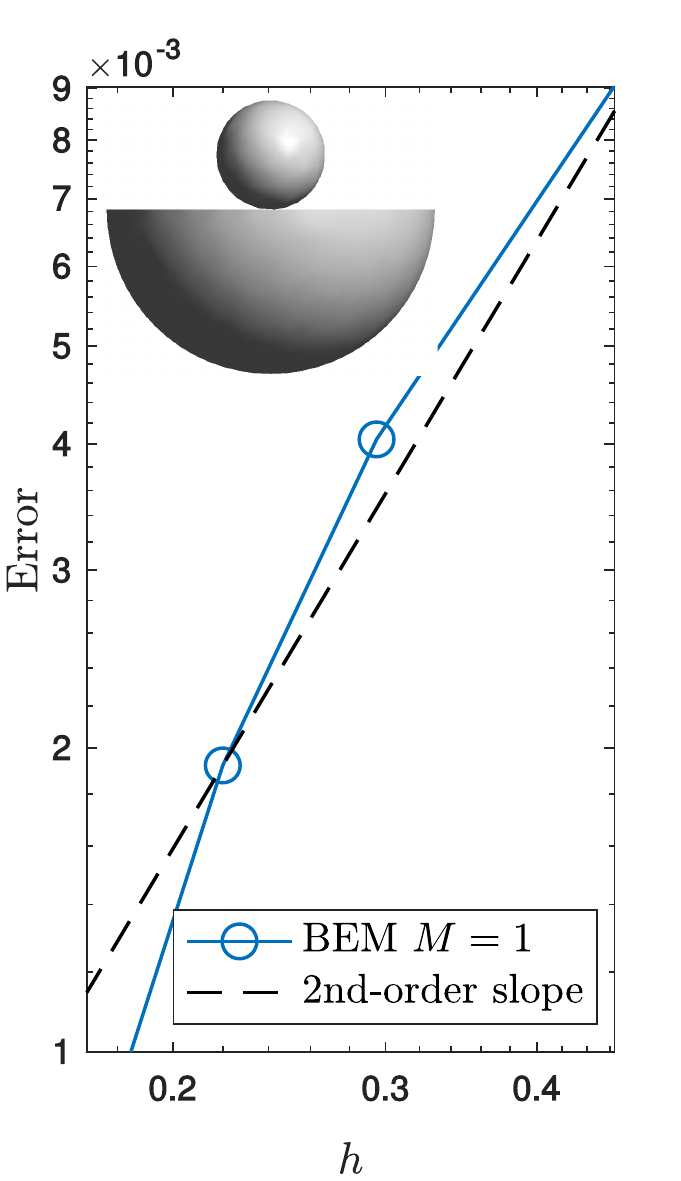}
\includegraphics[scale=0.7]{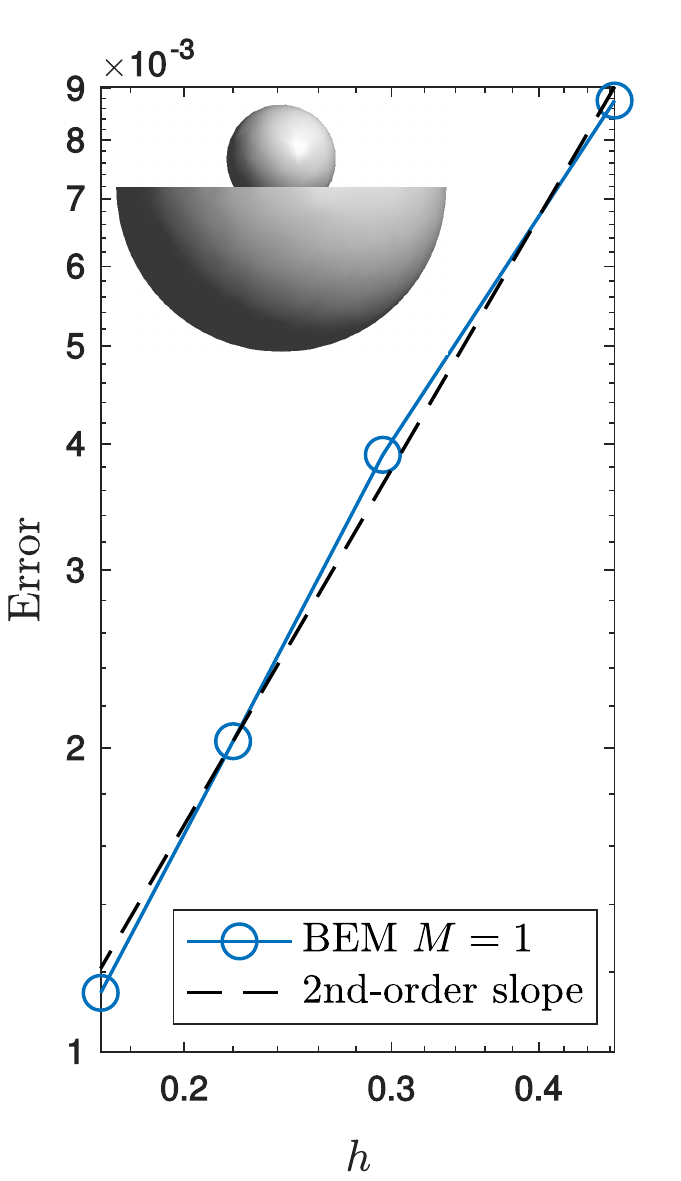}
\includegraphics[scale=0.7]{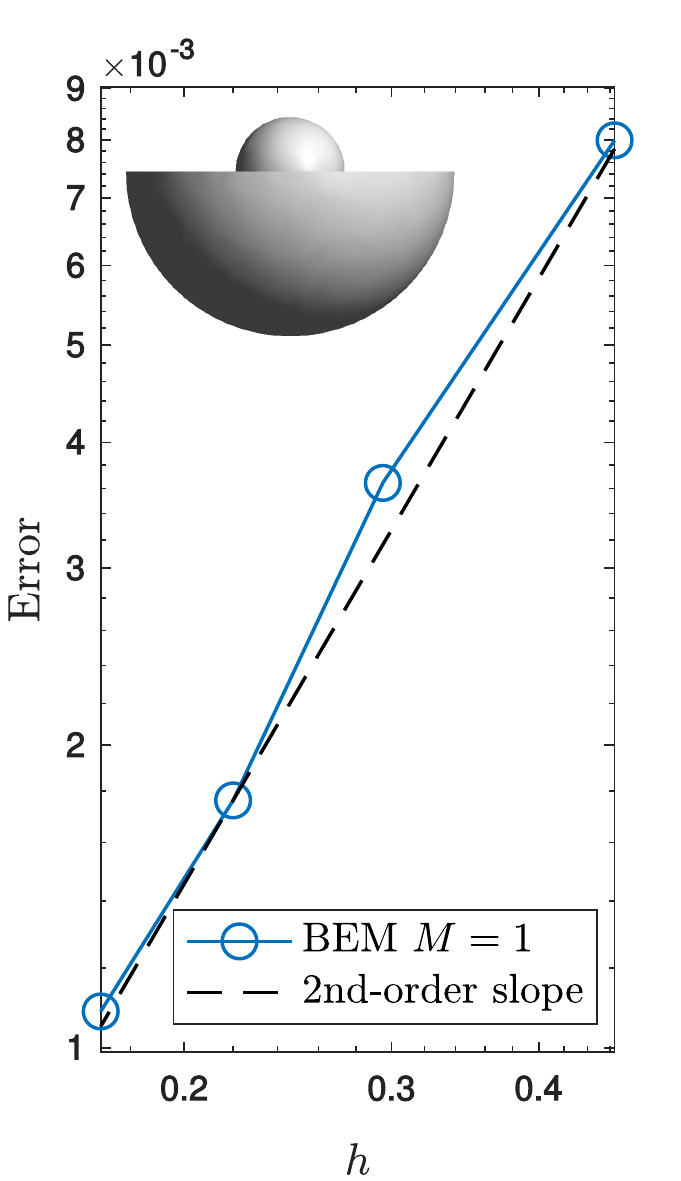}
\caption{Far-field errors in the solution of the Dirichlet problem~\eqref{eq:Dirichlet} corresponding to the scattering of a plane-wave in the direction $\bol d = (\cos\frac\pi4,0,-\sin\frac\pi4)$, off three different composite structures using the BEM method of Section~\ref{sec:Nystrom} applied to the BW integral equation~\eqref{BW} with  $k=\eta=1$.  The analytical PWDI procedure with $M=1$ was used in this example. Separate meshes of the sphere and the hemisphere where used in this example.}\label{fig:BEM_final}
\end{figure}

 \begin{figure}[h!]
\centering	
\includegraphics[scale=0.24]{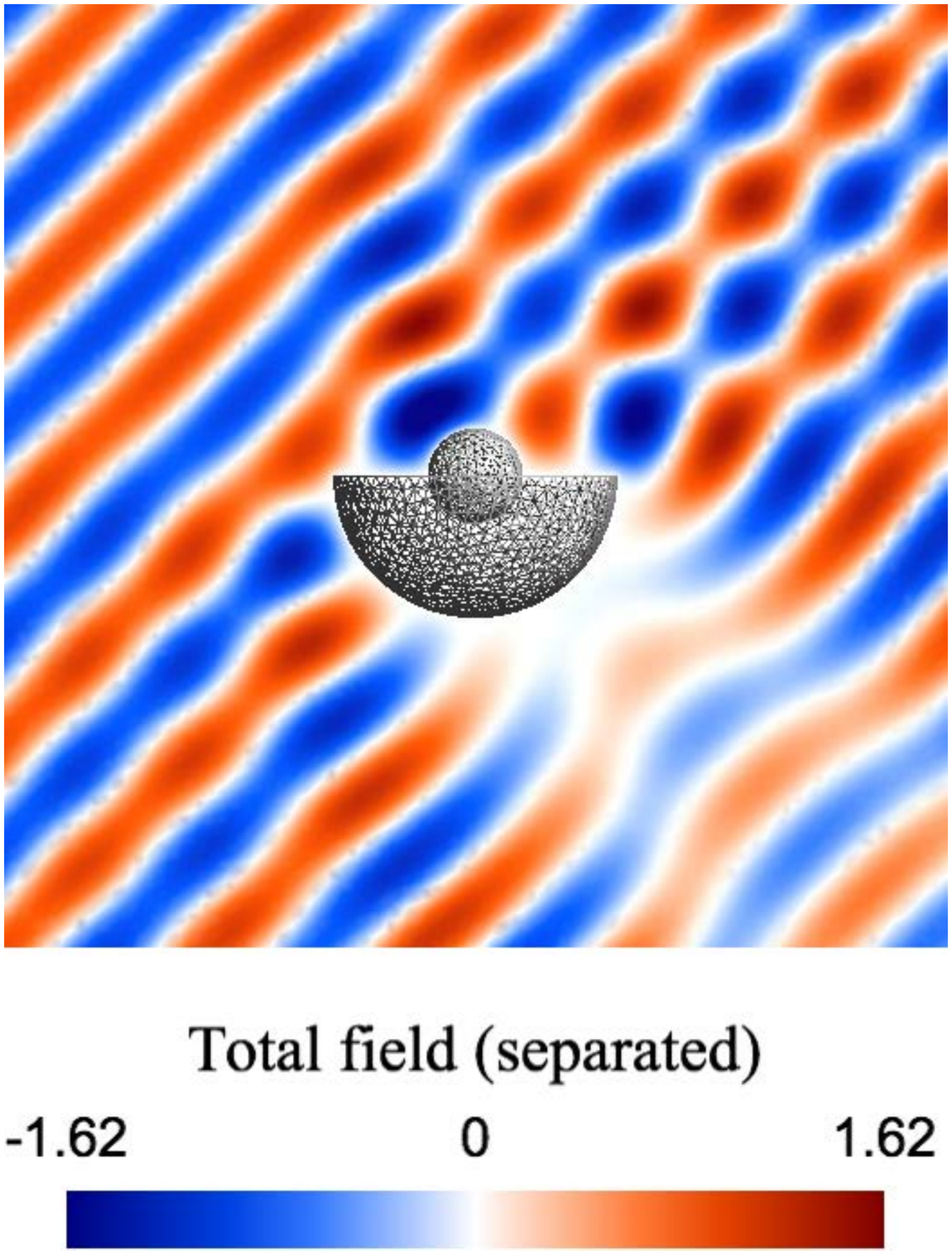}
\includegraphics[scale=0.24]{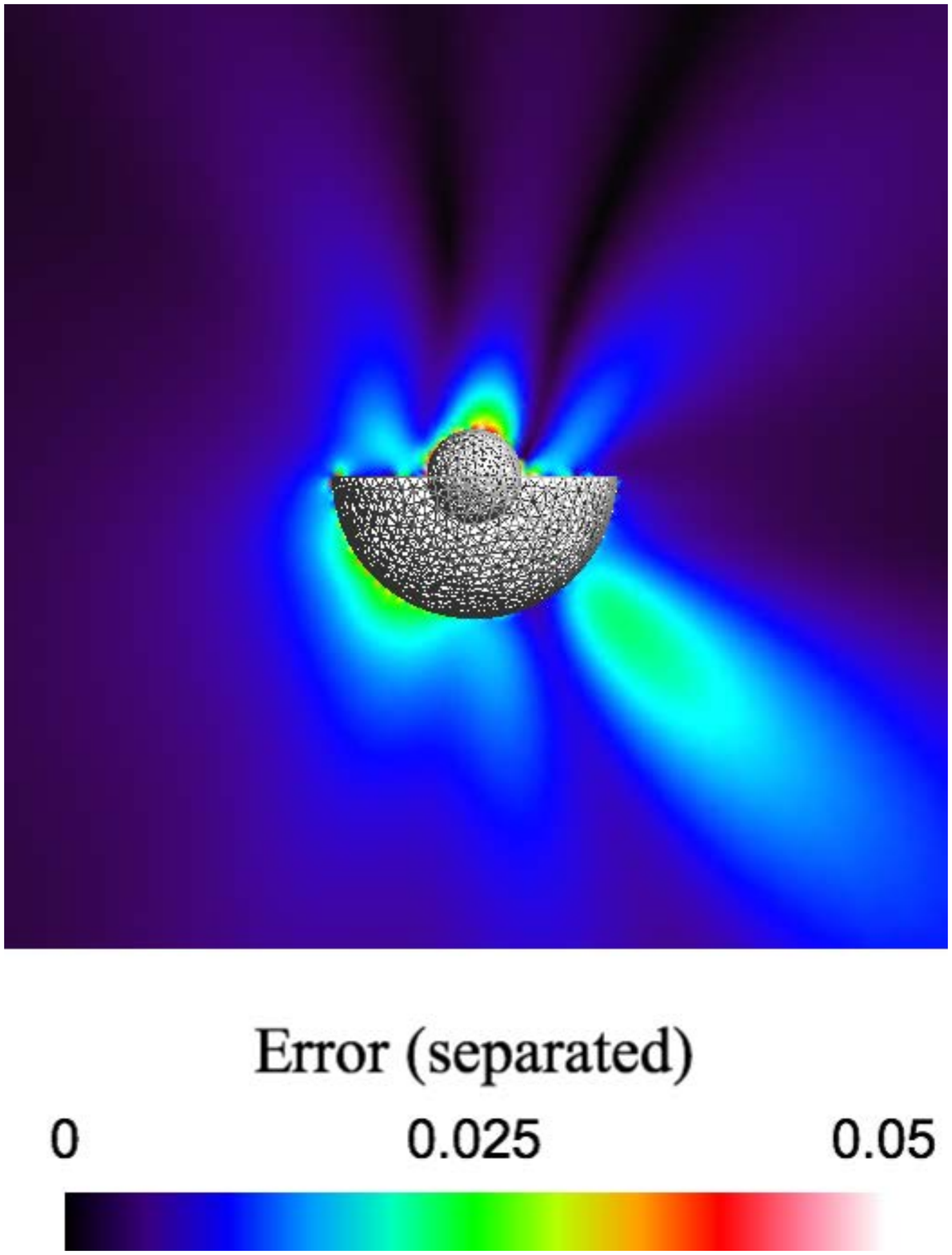}
\includegraphics[scale=0.24]{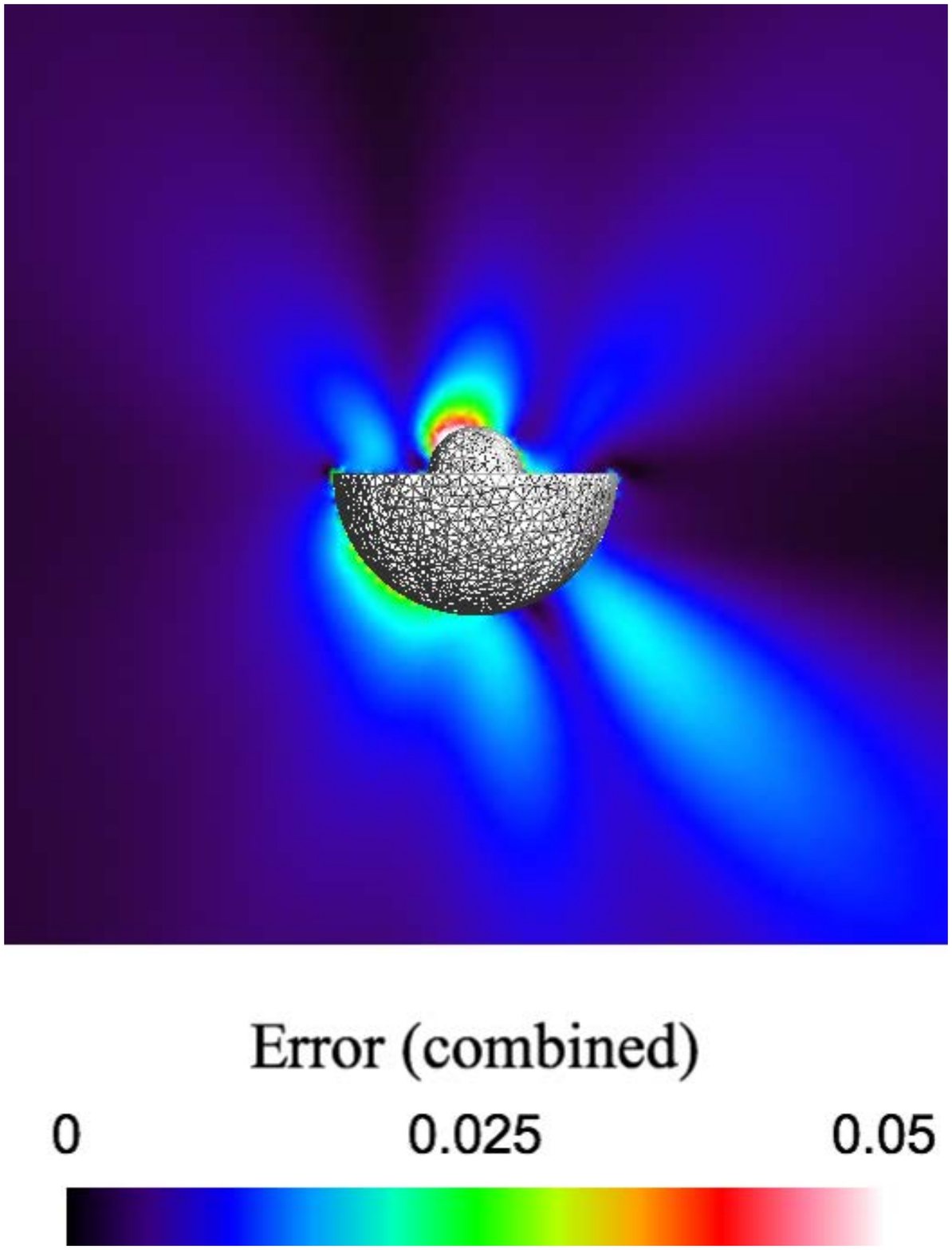}
\caption{Left: total near field  corresponding to the scattering of a plane-wave in the direction $\bol d = (\cos\frac\pi4,0,-\sin\frac\pi4)$ using the BEM method of Section~\ref{sec:Nystrom} applied to the BW integral equation~\eqref{BW} with  $k=\eta=4$ using separated meshes for the upper sphere and lower hemisphere. Center: error in the solution using separated meshes with $h=0.22$. Right: error in the solution using a  combined mesh with $h=0.21$. Reference solution computed using a combined mesh with $h= 0.1$. The analytical PWDI procedure with $M=1$ was used in this example.}\label{fig:BEM_final_final}
\end{figure}

\appendix
\section{Uniqueness of multiple scattering integral equation formulation for composite surfaces}\label{app:uniqueness}
This appendix is devoted to establishing uniqueness of solution of the boundary integral equation system~\eqref{eq:IE_sep}.  We present the following result whose proof asumes that $\Omega_1\cap\Omega_2\neq\emptyset$ is open:
\begin{lemma} The integral equation system~\eqref{eq:IE_sep} has at most one solution.
\end{lemma}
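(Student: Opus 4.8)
The plan is to prove that the \emph{homogeneous} system --- obtained by setting $f_1=f_2=0$ in~\eqref{eq:IE_sep} --- admits only the trivial solution $\varphi_1=\varphi_2=0$. Let $\tilde\varphi=(\varphi_1,\varphi_2)$ solve it and set $u^s=\tilde{\mathcal D}\tilde\varphi-\im\eta\,\tilde{\mathcal S}\tilde\varphi$, a radiating Helmholtz solution in $\R^3\setminus\tilde\Gamma$. The $j$-th homogeneous equation expresses exactly the vanishing of the exterior (outward-$\Omega_j$ side) Dirichlet trace of $u^s$ on $\Gamma_j$. Since $\Omega_1\cap\Omega_2$ is open, $\R^3\setminus\tilde\Gamma$ splits into the components $D_0=\R^3\setminus\overline\Omega$, $D_1=\Omega_1\setminus\overline{\Omega_2}$, $D_2=\Omega_2\setminus\overline{\Omega_1}$ and $D_{12}=\Omega_1\cap\Omega_2$, whose interfaces are $\Sigma_1=\Gamma_1\setminus\overline{\Omega_2}$ and $\Sigma_2=\Gamma_2\setminus\overline{\Omega_1}$ (bordering $D_0$) together with the interior pieces $\sigma_1=\Gamma_1\cap\Omega_2$ and $\sigma_2=\Gamma_2\cap\Omega_1$. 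Across any such smooth piece carrying density $\varphi_j$, the standard jump relations give that the Dirichlet trace jumps by $\varphi_j$ and the normal-derivative trace by $\im\eta\varphi_j$ (jump taken from the $\bnor_j$-side minus the opposite side). Reading the two homogeneous equations piecewise then yields zero Dirichlet data for $u^s$ on $\partial D_0=\Sigma_1\cup\Sigma_2$ from the $D_0$ side, zero Dirichlet data on $\sigma_1$ from the $D_2$ side, and zero Dirichlet data on $\sigma_2$ from the $D_1$ side.

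With these facts in hand I would peel the regions from the outside in. First, $u^s|_{D_0}$ solves the exterior Helmholtz problem with vanishing Dirichlet data on $\partial D_0$ and the Sommerfeld condition, so $u^s\equiv0$ in $D_0$ by uniqueness of the exterior Dirichlet problem~\cite{COLTON:1983}. Feeding $u^s=0$ and $\partial_{\bnor}u^s=0$ (from the $D_0$ side) into the jump relations across $\Sigma_1$ converts the $D_1$-trace into an impedance datum $\partial_\nu u^s-\im\eta\,u^s=0$ on $\Sigma_1$, $\nu$ the outward normal of $D_1$, while the $\Gamma_2$-equation forces $u^s=0$ on the remaining boundary piece $\sigma_2$ of $\partial D_1=\Sigma_1\cup\sigma_2$. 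Green's first identity on $D_1$, namely $\int_{D_1}(|\grd u^s|^2-k^2|u^s|^2)=\int_{\partial D_1}\overline{u^s}\,\partial_\nu u^s$, has a purely real left-hand side, while its right-hand side equals $\im\eta\int_{\Sigma_1}|u^s|^2$ (the $\sigma_2$-integral dropping since $u^s=0$ there); taking imaginary parts and using $\eta\neq0$ gives $u^s=0$ on $\Sigma_1$, hence $\partial_\nu u^s=0$ there as well. Thus $u^s$ has vanishing Cauchy data on the open surface $\Sigma_1$, and unique continuation forces $u^s\equiv0$ in $D_1$. The identical argument on $D_2$ (with $\Sigma_2,\sigma_1$ in place of $\Sigma_1,\sigma_2$) gives $u^s\equiv0$ in $D_2$.

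Finally, the jumps across $\sigma_1$ and $\sigma_2$ --- now that $u^s$ vanishes in the adjacent regions $D_2$ and $D_1$ --- turn the two boundary pieces of $\partial D_{12}=\sigma_1\cup\sigma_2$ into impedance conditions $\partial_\nu u^s-\im\eta\,u^s=0$, so Green's identity on $D_{12}$ yields $u^s=0$ on all of $\partial D_{12}$ and hence $u^s\equiv0$ in $D_{12}$. Therefore $u^s\equiv0$ throughout $\R^3\setminus\tilde\Gamma$, and the Dirichlet jump relations across $\Sigma_1,\sigma_1$ (for $\varphi_1$) and $\Sigma_2,\sigma_2$ (for $\varphi_2$) express each density as a difference of two vanishing traces, whence $\varphi_1=\varphi_2=0$ and the solution is unique.

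The main obstacle I anticipate is the geometric bookkeeping: correctly identifying the four regions, the orientation of each normal on each interface, and which side's trace each homogeneous equation controls, so that the $\tfrac12\varphi_j$ jump terms land on the correct component. The second delicate point is the passage from vanishing Cauchy data on a \emph{proper} boundary patch ($\Sigma_1$, resp.\ $\Sigma_2$) to vanishing of $u^s$ in the whole adjacent region: this cannot rest on interior Dirichlet uniqueness (which fails at Dirichlet eigenvalues) and instead uses unique continuation across the smooth surface $\Sigma_j$, legitimate because the Helmholtz operator has analytic coefficients and $\Gamma_1,\Gamma_2$ are smooth. Degenerate configurations in which one obstacle is nested in the other (so that some $D_j$ or interface is empty) are covered by the same chain of identities, with the empty pieces simply omitted.
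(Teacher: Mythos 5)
Your proof is correct and follows essentially the same route as the paper's: represent $u^s$ as the combined-field potential, annihilate it in the unbounded component by exterior Dirichlet uniqueness, and then propagate vanishing traces region by region using the jump relations and an impedance-type Green's identity. The only substantive difference is bookkeeping: you treat the lunes $\Omega_1\setminus\overline{\Omega_2}$ and $\Omega_2\setminus\overline{\Omega_1}$ before the intersection $\Omega_1\cap\Omega_2$ --- which is in fact the order needed to justify the Robin datum on $\partial(\Omega_1\cap\Omega_2)$, since that datum requires the Neumann trace from the adjacent lune to vanish --- and you make explicit the unique-continuation step on the lunes that the paper's terser appeal to ``uniqueness of the interior boundary value problem'' glosses over.
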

\begin{proof}
Assume that \eqref{eq:IE_sep} admits a non-trivial solution $\tilde\varphi$ when $(f_1,f_2)=(0,0)$. The potential $u^s(\ner) = (\tilde{\mathcal D}\tilde\varphi)(\ner) - \im\eta(\tilde{\mathcal S}\tilde\varphi)(\ner)$ is then a homogeneous solution of the Helmholtz equation in $\R^3\setminus(\Gamma_1\cup\Gamma_2)$ whose limits from outside vanish on both $\Gamma_1$ and $\Gamma_2$. Since the uniqueness of the exterior Dirichlet problem implies that $u^s=0$ in $\R^3\setminus(\Omega_1\cup\Omega_2)$~(cf.~\cite{COLTON:1983}), we have from the integral equation and the jump conditions of the single- and double-layer potentials on~$\Gamma_j$,  that $[u^s]=\varphi_j$ and $[\p_n u^s] = \im\!\eta\varphi_j$ on $\Gamma_j$, $j=1,2$. Therefore, $u^s$ is a homogeneous solution of the Helmholtz equation in  $\Omega_1\cap\Omega_2$ that satisfies the Robin boundary condition $\p_n u^s-\im\!\eta u^s=0$ on $\p(\Omega_1\cap\Omega_2)\subset\Gamma_1\cup\Gamma_2$. By uniqueness of this interior boundary value problem~(cf.~\cite{COLTON:1983}) we have that $u^s=0$ in ${\Omega_1\cap\Omega_2}$.  Similarly, we get that $u^s=0$ in ${\Omega_1\setminus\Omega_2}$ and in ${\Omega_2\setminus\Omega_1}$, and thus $u^s=0$ in all of ${\Omega_1\cup\Omega_2}$. Therefore, since $u^s=0$ in $\R^3\setminus (\Gamma_1\cup\Gamma_2)$ we conclude that $[u^s]=\varphi=0$ on $\Gamma_1\cup\Gamma_2$, which is a contradiction. The proof is now complete.
\end{proof}

\section{Explicit expression of the matrix $\bol C(p)$ for $M=3$}\label{app:explicit_M_3} This appendix is devoted to the derivation of the matrix $\bol C$ used in the algebraic procedure introduced in Section~\ref{sec:higher_order} for the construction of planewave interpolant in the case $M=3$. 

In order to ease derivations we define
$$w_\ell(q,p) = \exp\lf({{\rm i}k\bol d_\ell\cdot(q-p)}\rg) \andtext w_{n,\ell}(q,p) ={\rm i}k\lf(\bol d_\ell\cdot \bold n(q)\rg) w_\ell(q,p),$$ where $\bol d_\ell$ ($|\bol d_\ell|=1$) is a given planewave direction that does not depend on the expansions point $p$.
Evaluating all the partial derivatives of $w_\ell$ and $w_{n,\ell}$  up to third order, we get
\begin{subequations}\begin{equation}\begin{array}{rlll}
\p^\alpha w_\ell(q,p) &=& {\rm i}k \lf(\bol d_\ell\cdot \p^\alpha\bnex(q)\rg)w_\ell(q,p),\smallskip\\
\p^{\beta+\alpha} w_\ell (q,p) &=& {\rm i}k \lf(\bol d_\ell\cdot \p^{\alpha+\beta}\bnex(q)\rg) w_\ell(q,p)+{\rm i}k \lf(\bol d_\ell\cdot \p^\alpha\bnex(q)\rg)\p^\beta w_\ell(q,p),\smallskip\\
\p^{\gamma+\beta+\alpha} w_\ell (q,p) &=& {\rm i}k \lf(\bol d_\ell\cdot \p^{\gamma+\beta+\alpha}\bnex(q)\rg) w_\ell(q,p)+{\rm i}k \lf(\bol d_\ell\cdot \p^{\beta+\alpha}\bnex(q)\rg)\p^\gamma w_\ell(q,p)\\
&&\!\!+{\rm i}k \lf(\bol d_\ell\cdot \p^{\gamma+\alpha}\bnex(q)\rg)\p^\beta w_\ell(q,p) +{\rm i}k \lf(\bol d_\ell\cdot \p^\alpha\bnex(q)\rg)\p^{\gamma+\beta} w_\ell(q,p),
\end{array}\end{equation}
and
\begin{equation}\begin{array}{rlll}
\p^\alpha w_{n,\ell} (q,p) &=&{\rm i}k\{ (\bol d_\ell\cdot \p^\alpha\bnor(q)) w_\ell(q,p) +\lf(\bol d_\ell\cdot \bnor(q)\rg)\p^\alpha w_\ell(q,p)\}, \smallskip\\
\p^{\beta+\alpha} w_{n,\ell} (q,p) &=&{\rm i}k\{(\bol d_\ell\cdot \p^{\beta+\alpha}\bnor(q)) w_\ell(q,p)+ (\bol d_\ell\cdot\p^\alpha \bnor(q))\p^\beta w_\ell(q,p)\smallskip \\
&&+(\bol d_\ell\cdot \p^\beta\bnor(q))\p^\alpha w_\ell(q,p)+(\bol d_\ell\cdot \bnor(q))\p^{\beta+\alpha} w_\ell(q,p)\}\smallskip\\
\p^{\gamma+\beta+\alpha} w_{n,\ell} (q,p) &=&{\rm i}k\{(\bol d_\ell\cdot \p^{\gamma+\beta+\alpha}\bnor(q)) w_\ell(q,p)+(\bol d_\ell\cdot \p^{\beta+\alpha}\bnor(q))\p^\gamma w_\ell(q,p)\smallskip\\
&&+ (\bol d_\ell\cdot\p^{\gamma+\alpha} \bnor(q))\p^\beta w_\ell(q,p)+(\bol d_\ell\cdot\p^\alpha \bnor(q))\p^{\gamma+\beta} w_\ell(q,p)\smallskip \\
&&+(\bol d_\ell\cdot \p^{\gamma+\beta}\bnor(q))\p^\alpha w_\ell(q,p)+(\bol d_\ell\cdot \p^\beta\bnor(q))\p^{\gamma+\alpha} w_\ell(q,p)\smallskip\\
&&+(\bol d_\ell\cdot \p^{\gamma}\bnor(q))\p^{\beta+\alpha} w_\ell(q,p)+(\bol d_\ell\cdot \bnor(q))\p^{\gamma+\beta+\alpha} w_\ell(q,p)\}\smallskip\\
\end{array}\end{equation}\label{eq:der_plane_wave}\end{subequations}
 where the indices $\alpha,\beta,\gamma\in\mathbb Z_+^2$ satisfy   $|\alpha|=|\beta|=|\gamma|=1$.

Selecting the ordering 
\begin{equation}\begin{split}\alpha:&~ (0,0)\mapsto 1,\quad (1,0)\mapsto 2,\quad (0,1)\mapsto 3,\quad (2,0)\mapsto4,
\quad(1,1)\mapsto 5,\\
&~(0,2)\mapsto 6,\quad(3,0)\mapsto 7,\quad (2,1)\mapsto 8,\quad (1,2)\mapsto 9,\quad (0,3)\mapsto 10,\end{split}\end{equation}
for the multi-indices, and enforcing the point conditions by evaluating the expressions~\eqref{eq:der_plane_wave} at $q=p$ we obtain that the entries of  $\bol C(p)$, that are denoted by  $c_{n,\ell}$ for $n=1,\ldots,N$ and $\ell=1,\ldots,L$, are given by the following (recursive) relations
\begin{subequations}\begin{equation*}\begin{array}{rlll}
c_{1,\ell} &:=&\p^{(0,0)} w_\ell (p,p)= 1,\smallskip\\
c_{2,\ell} &:=&\p^{(1,0)} w_\ell (p,p)= {\rm i}k \lf(\bol d_\ell\cdot \p^{(1,0)}\bnex(p)\rg),\smallskip\\
c_{3,\ell} &:=&\p^{(0,1)} w_\ell (p,p)= {\rm i}k \lf(\bol d_\ell\cdot \p^{(0,1)}\bnex(p)\rg),\smallskip\\
c_{4,\ell} &:=&\p^{(2,0)} w_\ell (p,p)= {\rm i}k \lf(\bol d_\ell\cdot \p^{(2,0)}\bnex(p)\rg)+ c_{2,\ell}^2,\smallskip\\
c_{5,\ell}&:=&\p^{(1,1)} w_\ell (p,p) = {\rm i}k \lf(\bol d_\ell\cdot \p^{(1,1)}\bnex(p)\rg)+c_{2,\ell}c_{3,\ell},\smallskip\\
c_{6,\ell}&:=&\p^{(0,2)} w_\ell (p,p) = {\rm i}k \lf(\bol d_\ell\cdot \p^{(0,2)}\bnex(p)\rg)+c_{3,\ell}^2,\smallskip\\
c_{7,\ell} &:=&\p^{(3,0)} w_\ell (p,p)= {\rm i}k \lf(\bol d_\ell\cdot \p^{(3,0)}\bnex(p)\rg)+3c_{2,\ell}c_{4,\ell}-2c_{2,\ell}^3,\smallskip\\
c_{8,\ell}&:=&\p^{(2,1)} w_\ell (p,p) = {\rm i}k \lf(\bol d_\ell\cdot \p^{(2,1)}\bnex(p)\rg)+2c_{2,\ell}(c_{5,\ell}-c_{2,\ell}c_{3,\ell}) +c_{3,\ell}c_{4,\ell},\smallskip\\
c_{9,\ell}&:=&\p^{(1,2)} w_\ell (p,p) = {\rm i}k \lf(\bol d_\ell\cdot \p^{(1,2)}\bnex(p)\rg)+2c_{3,\ell}(c_{5,\ell}-c_{2,\ell}c_{3,\ell})+c_{2,\ell}c_{6,\ell},\smallskip\\
c_{10,\ell}&:=&\p^{(0,3)} w_\ell (p,p) = {\rm i}k \lf(\bol d_\ell\cdot \p^{(0,3)}\bnex(p)\rg)+3c_{3,\ell}c_{6,\ell}-2c_{3,\ell}^3,\smallskip
\end{array}\end{equation*}
and 
\begin{equation*}\begin{array}{rlll}
c_{11,\ell}&:=&\p^{(0,0)} w_{n,\ell} (p,p)={\rm i}k\lf(\bol d_\ell\cdot \bold n(p)\rg) , \smallskip\\
c_{12,\ell}&:=&\p^{(1,0)} w_{n,\ell} (p,p) ={\rm i}k\lf(\bol d_\ell\cdot \p^{(1,0)}\bnor(p)\rg) +c_{2,\ell}c_{11,\ell}, \smallskip\\
c_{13,\ell}&:=&\p^{(0,1)} w_{n,\ell} (p,p) ={\rm i}k\lf(\bol d_\ell\cdot \p^{(0,1)}\bnor(p)\rg) +c_{3,\ell}c_{11,\ell}, \smallskip\\
c_{14,\ell}&:=&\p^{(2,0)} w_{n,\ell} (p,p) ={\rm i}k\lf(\bol d_\ell\cdot \p^{(2,0)}\bnor(p)\rg)+ 2c_{2,\ell}(c_{12,\ell}-c_{2,\ell}c_{11,\ell})+c_{4,\ell}c_{11,\ell},\smallskip\\
c_{15,\ell}&:=&\p^{(1,1)} w_{n,\ell} (p,p) ={\rm i}k\lf(\bol d_\ell\cdot \p^{(1,1)}\bnor(p)\rg)-2c_{2,\ell}c_{3,\ell}c_{11,\ell}+ c_{3,\ell}c_{12,\ell}+c_{2,\ell}c_{13,\ell}\\
&&+c_{5,\ell}c_{11,\ell},\smallskip\\
c_{16,\ell}&:=&\p^{(0,2)} w_{n,\ell} (p,p) ={\rm i}k\lf(\bol d_\ell\cdot \p^{(0,2)}\bnor(p)\rg)+2c_{3,\ell}(c_{13,\ell}-c_{3,\ell}c_{11,\ell})+c_{6,\ell}c_{11,\ell},\smallskip\\
\end{array}\end{equation*}\end{subequations}
\begin{subequations}\begin{equation*}\begin{array}{rlll}
c_{17,\ell}&:=&\p^{(3,0)} w_{n,\ell} (p,p) ={\rm i}k\lf(\bol d_\ell\cdot \p^{(3,0)}\bnor(p)\rg)+3(c_{2,\ell}c_{14,\ell}+c_{4,\ell}c_{12,\ell})\\
&&+6c_{2,\ell}(c_{2,\ell}^2c_{11,\ell}-c_{2,\ell}c_{12,\ell}-c_{4,\ell}c_{11,\ell})+c_{7,\ell}c_{11,\ell},\smallskip\\
c_{18,\ell}&:=&\p^{(2,1)} w_{n,\ell} (p,p) ={\rm i}k(\bol d_\ell\cdot \p^{(2,1)}\bnor(p))+6c^2_{2,\ell}c_{3,\ell}c_{11,\ell}-4c_{2,\ell}(c_{3,\ell}c_{12,\ell}\\
&&+c_{5,\ell}c_{11,\ell})+2(c_{2,\ell}(c_{15,\ell}-c_{2,\ell}c_{13,\ell})+c_{5,\ell}c_{12,\ell}-c_{3,\ell}c_{4,\ell}c_{11,\ell})\smallskip\\
&&+c_{3,\ell}c_{14,\ell}+c_{4,\ell}c_{13,\ell}+c_{8,\ell}c_{11,\ell},\smallskip\\
c_{19,\ell}&:=&\p^{(1,2)} w_{n,\ell} (p,p)={\rm i}k(\bol d_\ell\cdot \p^{(1,2)}\bnor(p))+6c_{2,\ell}c^2_{3,\ell}c_{11,\ell}-4c_{3,\ell}(c_{2,\ell}c_{13,\ell}\\
&&+c_{5,\ell}c_{11,\ell})+2(c_{3,\ell}(c_{15,\ell}-c_{3,\ell} c_{12,\ell})+c_{5,\ell}c_{13,\ell}-c_{2,\ell}c_{6,\ell}c_{11,\ell})\smallskip\\
&&+c_{2,\ell}c_{16,\ell}+c_{6,\ell}c_{12,\ell}+c_{9,\ell}c_{11,\ell},\smallskip\\
c_{20,\ell}&:=&\p^{(0,3)} w_{n,\ell} (p,p) ={\rm i}k(\bol d_\ell\cdot \p^{(0,3)}\bnor(p))+3(c_{3,\ell}c_{16,\ell}+c_{6,\ell}c_{13,\ell})\smallskip\\
&&+6c_{3,\ell}(c^2_{3,\ell}c_{11,\ell}-c_{3,\ell}c_{13,\ell}-c_{6,\ell}c_{11,\ell})+c_{10,\ell}c_{11,\ell}.\end{array}\end{equation*}\end{subequations}

\begin{remark}
Since the  identities $\p^{\alpha}\bnex = 0$ for $|\alpha|>1$ and $\p^{\alpha}\bnor = 0$ for $|\alpha|>0$ hold true for any planar coordinate patch, we have that  letting $\tau_1={\rm i}k \lf(\bol d_\ell\cdot \p^{(1,0)}\bnex\rg)$, $\tau_2={\rm i}k \lf(\bol d_\ell\cdot \p^{(0,1)}\bnex\rg)$ and $\tau_3={\rm i}k\lf(\bol d_\ell\cdot\bol n\rg)$, the $\ell$-th column of $\bol C$ corresponding to a given planewave direction $\bol d_\ell$ is simply given the vector $[\bold c,\tau_3\bold c]^T\in \C^{20}$ where
$\bold c =[1,\tau_1,\tau_2,\tau_1^2,\tau_1\tau_2,\tau_2^2,\tau_1^3,\tau_1^2\tau_2,\tau_1\tau_2^2,\tau_2^3]$.
\end{remark}

\section{Compatibility of the PWDI method with fast methods}\label{app:compatible}
This appendix discusses the compatibility of the PWDI technique with standard fast multipole methods (FMMs) for the acceleration of the solution of the combined field integral equation~\eqref{BW} by means of iterative linear algebra solvers such as~GMRES~\cite{saad1986gmres}.  In a nutshell, FMMs rely on a certain low-rank approximation of the relevant integral kernel $K:\Gamma\times\Gamma\to \C$ in the form
\begin{equation}\label{eq:low_rank}K(p,q) \approx \sum_{\ell=1}^P H_\ell(p)M_\ell(q),
\end{equation} which is typically valid for observation points $p$ that are sufficiently far away from the source point $q$. In the case of the BW combined field integral operator, for example, the kernel is given by
$K(p,q)=\partial G(p,q)/\partial n(q)-{\rm i}\eta G(p,q),$
where $G$ is the free-space Green function defined in~\eqref{eq:GF}.  When a kernel expansion of the form~\eqref{eq:low_rank} is available, a FMM can be devised to effectively reduce the computational cost of the numerical evaluation of  the integral operator
\begin{equation}
(T\varphi)(p) =\int_{\Gamma} K(p,q) \varphi(q)\de s(q),\qquad p\in\Gamma,\label{eq:integral_operator}
\end{equation}
from  $\mathcal O(N_t^2)$ to nearly $\mathcal O(N_t)$ operations, where $N_t$ denotes the total number of quadrature points  (resp. mesh nodes) used in the discretization of the boundary integral over $\Gamma$ by the Nystr\"om (resp. boundary element) method. At low-frequencies, when the kernel $K$ is  non-oscillatory (i.e., for obstacles $\Omega$ of just a few wavelengths $\lambda=2\pi/k$ in diameter), such approximation can be achieved, for example, by means of multipole/planewave expansions~\cite{gumerov2005fast,coifman1993fast,Greengard1998}, polynomial interpolation~\cite{fong2009black}, or projection methods based on equivalent sources~\cite{Ying:2004cv}. 

Unfortunately, however, unlike the non-regularized combined field integral equation~\eqref{BW}, the integral equation~\eqref{eq:BW_hoss} resulting from application of the density interpolation technique requires evaluation of boundary integrals of the form
\begin{equation}\begin{split}
\int_{\Gamma}\frac{\partial G(p,q)}{\partial n(q)}\lf\{\varphi(q)-u(q,p)\rg\}\de s(q)
\mbox{ and }
\int_{\Gamma} G(p,q)\lf\{{\rm i}\eta\varphi(q)-u_n(q,p)\rg\}\de s(q)
\end{split}\label{eq:reg_int}
\end{equation}
 for $p\in\Gamma$, where not only $G$ depends on $p$ and $q$ but also the Dirichlet and Neumann traces $u$ and $u_n$ of the interpolating function~$U$ in~\eqref{eq:interpolants}. 
Therefore, in principle, any FMM for the numerical evaluation of~\eqref{eq:reg_int}  would require  low-rank approximations of two additional kernels, namely 
$$\frac{\partial G(p,q)}{\partial n(q)}u(q,p)\andtext G(p,q)u_n(q,p).$$


Fortunately, direct use of existing FMMs for the evaluation of~\eqref{eq:reg_int} is still possible when the algorithm introduced in Section~\ref{sec:higher_order} is used for the construction of the interpolating function $U$. Indeed, since the directions $\bol d_\ell$ of the planewaves 
$$W_\ell(\ner-p)=\exp({\rm i}k\bol d_\ell\cdot(\ner-q))=\overline{w_\ell(p)}W_\ell(\ner),\quad \ell=1,\ldots,L,$$ are independent of the both observation and source points, the resulting interpolating functions is trivially separable, i.e, 
$$
U(\ner,p) =\sum_{\ell=1}^{L}\phi_\ell(p)\overline{w_\ell(p)}W_\ell(\ner),
$$
where the expansion coefficients are given by
$$
\phi_\ell(p) := \sum_{|\alpha|=0}^{M}\frac{\p^\alpha\varphi(p)}{\alpha!}\lf\{a_{\ell,\alpha}(p)+{\rm i}\eta b_{\ell,\alpha}(p)\rg\},\quad \ell=1,\ldots,L.
$$
in terms of  $a_{\ell,\alpha}$ and $b_{\ell,\alpha}$ being given by solution of the linear systems~\eqref{eq:lin_sym1} and~\eqref{eq:lin_sym2}, respectively.
Therefore, upon integration over $\Gamma$ we obtain 
\begin{equation}\begin{split}
\int_\Gamma \frac{\p G(p,q)}{\p n(q)}u(q,p)\de s(q) =&~\sum_{\ell=1}^{L}\phi_\ell(p)\overline{w_\ell(p)}\int_\Gamma \frac{\p G(p,q)}{\p n(q)} w_\ell(q)\de s(q),\\
\int_\Gamma G(p,q)u_n(q,p)\de s(q) =&~\sum_{\ell=1}^{L}\phi_\ell(p)\overline{w_\ell(p)}\int_\Gamma G(p,q)(\bnor(q)\cdot\bol d_\ell) w_{\ell}(q)\de s(q),
\end{split}\end{equation}
for all $p\in\Gamma$. We thus conclude from here that the kernel regularized integral operator~\eqref{eq:integral_operator} can in fact be evaluated by direct application of FMM-accelerated double- and single-layer operators, to each one of the planewaves $w_\ell$, $\ell=1,\ldots,L$. Furthermore, since the coefficients $a_{\ell,\alpha}$ and $b_{\ell,\alpha}$ for $|\alpha|\leq M$ and $\ell=1,\ldots,L$, the multipole moments (or equivalent sources), and the local expansions coefficients associated to each one of the planewave $w_\ell$, are independent of the density function $\varphi$, they can be precomputed, stored, reused at each iteration of the linear algebra solver (GMRES) thus further reducing the computational cost associated to a forward map evaluation. It can be easily checked, finally,  that the overall computational cost of one forward map evaluation using the proposed FMM-accelerated algorithm is $\mathcal O(N_t\log N_t)$ for the high-order Nystr\"om method of Section~\ref{sec:Nystrom}, and $\mathcal O(N_t)$ for the low-order BEM of Section~\ref{sec:BEM}.

\bibliographystyle{abbrv}
\bibliography{references}

\begin{thebibliography}{10}

\bibitem{boubendir2013wave}
Y.~Boubendir and C.~Turc.
\newblock Wave-number estimates for regularized combined field boundary
  integral operators in acoustic scattering problems with {N}eumann boundary
  conditions.
\newblock {\em IMA Journal of Numerical Analysis}, 33(4):1176--1225, 2013.

\bibitem{brakhage1965dirichletsche}
H.~Brakhage and P.~Werner.
\newblock {\"U}ber das dirichletsche aussenraumproblem f{\"u}r die
  {H}elmholtzsche schwingungsgleichung.
\newblock {\em Archiv der Mathematik}, 16(1):325--329, 1965.

\bibitem{Bruno:2012dx}
O.~P. Bruno, T.~Elling, and C.~Turc.
\newblock {Regularized integral equations and fast high-order solvers for
  sound-hard acoustic scattering problems}.
\newblock {\em International Journal for Numerical Methods in Engineering},
  91(10):1045--1072, June 2012.

\bibitem{Bruno:2001ima}
O.~P. Bruno and L.~A. Kunyansky.
\newblock {A fast, high-order algorithm for the solution of surface scattering
  problems: Basic implementation, tests, and applications}.
\newblock {\em Journal of Computational Physics}, 1:80--110, 2001.

\bibitem{Burton1971Application}
A.~Burton and G.~Miller.
\newblock The application of integral equation methods to the numerical
  solution of some exterior boundary-value problems.
\newblock {\em Proceedings of the Royal Society of London. Series A,
  Mathematical and Physical Sciences}, pages 201--210, 1971.

\bibitem{caorsi1993theoretical}
S.~Caorsi, D.~Moreno, and F.~Sidoti.
\newblock Theoretical and numerical treatment of surface integrals involving
  the free-space green's function.
\newblock {\em IEEE transactions on antennas and propagation},
  41(9):1296--1301, 1993.

\bibitem{chen2010tmsmesh}
M.~Chen and B.~Lu.
\newblock Tmsmesh: A robust method for molecular surface mesh generation using
  a trace technique.
\newblock {\em Journal of Chemical Theory and Computation}, 7(1):203--212,
  2010.

\bibitem{coifman1993fast}
R.~Coifman, V.~Rokhlin, and S.~Wandzura.
\newblock The fast multipole method for the wave equation: A pedestrian
  prescription.
\newblock {\em IEEE Antennas and Propagation Magazine}, 35(3):7--12, 1993.

\bibitem{COLTON:1983}
D.~L. Colton and R.~Kress.
\newblock {\em Integral Equation Methods in Scattering Theory}.
\newblock Pure and Applied Mathematics. John Wiley \& Sons Inc., first edition,
  1983.

\bibitem{Costabel:2000bd}
M.~Costabel and M.~Dauge.
\newblock {Singularities of Electromagnetic Fields{\textparagraph}in Polyhedral
  Domains}.
\newblock {\em Archive for Rational Mechanics and Analysis}, 151(3):221--276,
  2000.

\bibitem{cowper1973gaussian}
G.~Cowper.
\newblock Gaussian quadrature formulas for triangles.
\newblock {\em International Journal for Numerical Methods in Engineering},
  7(3):405--408, 1973.

\bibitem{davis2007methods}
P.~J. Davis and P.~Rabinowitz.
\newblock {\em Methods of numerical integration}.
\newblock Courier Corporation, 2007.

\bibitem{do2016differential}
M.~P. do~Carmo.
\newblock {\em Differential Geometry of Curves and Surfaces: Revised and
  Updated Second Edition}.
\newblock Courier Dover Publications, 2016.

\bibitem{duffy1982quadrature}
M.~G. Duffy.
\newblock Quadrature over a pyramid or cube of integrands with a singularity at
  a vertex.
\newblock {\em SIAM journal on Numerical Analysis}, 19(6):1260--1262, 1982.

\bibitem{dunavant1985high}
D.~Dunavant.
\newblock {High degree efficient symmetrical Gaussian quadrature rules for the
  triangle}.
\newblock {\em International Journal for Numerical Methods in Engineering},
  21(6):1129--1148, 1985.

\bibitem{fong2009black}
W.~Fong and E.~Darve.
\newblock The black-box fast multipole method.
\newblock {\em Journal of Computational Physics}, 228(23):8712--8725, 2009.

\bibitem{geuzaine2009gmsh}
C.~Geuzaine and J.-F. Remacle.
\newblock Gmsh: A 3-d finite element mesh generator with built-in pre-and
  post-processing facilities.
\newblock {\em International journal for numerical methods in engineering},
  79(11):1309--1331, 2009.

\bibitem{graglia1993numerical}
R.~D. Graglia.
\newblock {On the numerical integration of the linear shape functions times the
  3-D Green's function or its gradient on a plane triangle}.
\newblock {\em IEEE Transactions on Antennas and Propagation},
  41(10):1448--1455, 1993.

\bibitem{Greengard1998}
L.~Greengard, J.~Huang, V.~Rokhlin, and S.~Wandzura.
\newblock Accelerating fast multipole methods for the {H}elmholtz equation at
  low frequencies.
\newblock {\em IEEE Computational Science and Engineering}, 5(3):32--38, 1998.

\bibitem{gumerov2005fast}
N.~A. Gumerov and R.~Duraiswami.
\newblock {\em Fast multipole methods for the Helmholtz equation in three
  dimensions}.
\newblock Elsevier, 2005.

\bibitem{hackbusch1993efficient}
W.~Hackbusch and S.~A. Sauter.
\newblock {On the efficient use of the Galerkin-method to solve Fredholm
  integral equations}.
\newblock {\em Applications of Mathematics}, 38(4):301--322, 1993.

\bibitem{Hackbusch:1994tq}
W.~Hackbusch and S.~A. Sauter.
\newblock On numerical cubatures of nearly singular surface integrals arising
  in {BEM} collocation.
\newblock {\em Computing}, 52(2):139--159, 1994.

\bibitem{Jarvenpaa:2003cs}
S.~J{\"a}rvenp{\"a}{\"a}, M.~Taskinen, and P.~Yl{\"a}-Oijala.
\newblock {Singularity extraction technique for integral equation methods with
  higher order basis functions on plane triangles and tetrahedra}.
\newblock {\em International Journal for Numerical Methods in Engineering},
  58(8):1149--1165, Oct. 2003.

\bibitem{jarvenpaa2006singularity}
S.~Jarvenpaa, M.~Taskinen, and P.~Yla-Oijala.
\newblock Singularity subtraction technique for high-order polynomial vector
  basis functions on planar triangles.
\newblock {\em IEEE transactions on antennas and propagation}, 54(1):42--49,
  2006.

\bibitem{Mclean2000Strongly}
W.~C.~H. McLean.
\newblock {\em Strongly Elliptic Systems and Boundary Integral Equations}.
\newblock Cambridge University Press, 2000.

\bibitem{NEDELEC:2001}
J.-C. N\'ed\'elec.
\newblock {\em Acoustic and Electromagnetic Equations: Integral Representations
  for Harmonic Problems. Vol. 144. Springer, 2001.}
\newblock Vol. 144, Springer, 2001.

\bibitem{plane-wave:2018}
C.~P{\'e}rez-Arancibia.
\newblock A plane-wave singularity subtraction technique for the classical
  {D}irichlet and {N}eumann combined field integral equations.
\newblock {\em Applied Numerical Mathematics}, 123:221--240, 2018.

\bibitem{HDI3D}
C.~P\'erez-Arancibia, C.~Turc, and L.~Faria.
\newblock Harmonic density interpolation methods for high-order evaluation of
  {L}aplace layer potentials in {2D} and {3D}.
\newblock {\em Journal of Computational Physics}, 376:411--434, 2019.

\bibitem{reid2015generalized}
M.~H. Reid, J.~K. White, and S.~G. Johnson.
\newblock Generalized {Taylor--Duffy} method for efficient evaluation of
  {G}alerkin integrals in boundary-element method computations.
\newblock {\em IEEE Transactions on Antennas and Propagation}, 63(1):195--209,
  2015.

\bibitem{saad1986gmres}
Y.~Saad and M.~H. Schultz.
\newblock Gmres: A generalized minimal residual algorithm for solving
  nonsymmetric linear systems.
\newblock {\em SIAM Journal on scientific and statistical computing},
  7(3):856--869, 1986.

\bibitem{Sauter:1996iw}
S.~A. Sauter.
\newblock {Cubature Techniques for 3-D Galerkin BEM}.
\newblock In {\em Boundary Elements: Implementation and Analysis of Advanced
  Algorithms}, pages 29--44. Vieweg+Teubner Verlag, Wiesbaden, 1996.

\bibitem{Sauter:2001iw}
S.~A. Sauter and C.~Lage.
\newblock {Transformation of hypersingular integrals and black-box cubature}.
\newblock {\em Mathematics of Computation}, 70(233):223--250, 2001.

\bibitem{Schulz:1998kl}
H.~Schulz, C.~Schwab, and W.~L. Wendland.
\newblock {The computation of potentials near and on the boundary by an
  extraction technique for boundary element methods}.
\newblock {\em Computer Methods in Applied Mechanics and Engineering},
  157(3-4):225--238, May 1998.

\bibitem{schwab1992numerical}
C.~Schwab and W.~L. Wendland.
\newblock On numerical cubatures of singular surface integrals in boundary
  element methods.
\newblock {\em Numerische Mathematik}, 62(1):343--369, 1992.

\bibitem{Schwab:bj}
C.~Schwab and W.~L. Wendland.
\newblock {On the extraction technique in boundary integral equations}.
\newblock {\em Mathematics of Computation}, 68(225):91--123, Jan. 1999.

\bibitem{wilton1984potential}
D.~Wilton, S.~Rao, A.~Glisson, D.~Schaubert, O.~Al-Bundak, and C.~Butler.
\newblock Potential integrals for uniform and linear source distributions on
  polygonal and polyhedral domains.
\newblock {\em IEEE Transactions on Antennas and Propagation}, 32(3):276--281,
  1984.

\bibitem{Ying:2004cv}
L.~Ying, G.~Biros, and D.~Zorin.
\newblock {A kernel-independent adaptive fast multipole algorithm in two and
  three dimensions}.
\newblock {\em Journal of Computational Physics}, 196(2):591--626, May 2004.

\bibitem{YlaOijala:2003bn}
P.~Yla-Oijala and M.~Taskinen.
\newblock {Calculation of cfie impedance matrix elements with RWG and $n
  \times$RWG functions}.
\newblock {\em IEEE Transactions on Antennas and Propagation},
  51(8):1837--1846, Aug. 2003.

\end{thebibliography}
\end{document}